\documentclass{amsart}
\usepackage[T1]{fontenc}
\usepackage{lmodern}
\usepackage{amsmath,amssymb,amsfonts,amsthm}
\usepackage{mathrsfs}
\usepackage{graphicx}
\usepackage{multicol,multirow}
\usepackage{rotating}
\usepackage{appendix}
\usepackage{xcolor}
\usepackage{mathtools,xurl}
\usepackage{microtype}
\usepackage[colorlinks=true,linkcolor=blue,citecolor=blue,urlcolor=blue]{hyperref}

\numberwithin{equation}{section}

\theoremstyle{plain}
\newtheorem{theorem}{Theorem}[section]
\newtheorem{lemma}[theorem]{Lemma}

\newtheorem{proposition}[theorem]{Proposition}

\theoremstyle{definition}

\theoremstyle{remark}
\newtheorem{remark}[theorem]{Remark}

\hypersetup{pdfauthor={Feng Guo},
  pdftitle={Approximation numbers of critical Besov embeddings},
  pdfkeywords={Besov embeddings, approximation numbers, critical smoothness}}

\DeclareMathOperator{\rank}{rank}
\DeclareMathOperator{\tr}{tr}

\DeclareMathOperator{\Id}{Id}
\allowdisplaybreaks[1]
\title[Critical Besov embeddings]{Approximation numbers of critical Besov embeddings}
\author{Feng Guo}
\address{School of Mathematics, South China University of Technology,
Guangzhou 510640, P. R. China}
\email{70207994@nuaa.edu.cn}
\subjclass[2020]{Primary 41A46, 46E35; Secondary 47B06, 46B06}
\keywords{Besov embeddings, approximation numbers, critical smoothness,
fine indices, quasi-Banach spaces}
\date{}

\begin{document}

\begin{abstract}
We determine the approximation numbers of critical embeddings between isotropic
Besov spaces on nonempty bounded Lipschitz domains in any dimension.
For $0<p_1<2<p_2\leq\infty$ and
$s_1-s_2=d\max\{1/p_1,1-1/p_2\}>d(1/p_1-1/p_2)$,
the approximation numbers have order $N^{-1/2}(\log(e+N))^\Gamma$ for every
$0<q_1,q_2\leq\infty$, with an explicit four-case formula for $\Gamma$.
This resolves the strictly compact critical Besov approximation-number problem
identified in Vyb\'iral's overview.
For $q_1<q_2$, the logarithmic exponent depends on the ordering of
$p_1'$ and $p_2$; it can vanish for one pair $(p_1,p_2)$ and remain positive for
another with the same fine indices.
A row estimate for the approximation error gives lower bounds when the target fine index is below one.
Gaussian finite-rank approximations acting on several levels give the sharp upper bounds
when $\Gamma<1/2$. For $q_1<q_2$, matching lower bounds for the two strict orderings of the inner indices follow
from Malykhin's finite-window estimates and norm comparisons.
The results include quasi-Banach indices and the full spaces of bounded coefficient
sequences at infinite fine indices.
\end{abstract}

\maketitle

\section{Introduction}
\label{sec:introduction}

Approximation numbers measure the smallest uniform error of linear approximation with
prescribed finite rank. The fine index of a Besov space governs summation over frequency levels.
In the range considered here, sharp nonlimiting orders depend on the smoothness gap and the
integrability indices, but not on the fine indices. At the transition between the two regimes for
$p_1<2<p_2$, the neighbouring power laws agree at $N^{-1/2}$.
At this transition, those estimates leave the logarithmic factor undetermined.

In his 2008 paper, Vyb\'iral~\cite[Section~5]{ref3}
identified the missing approximation-number order in the limiting case
\[
0<p_1<2<p_2\leq\infty,\qquad
s_1-s_2=d\max\left\{\frac1{p_1},1-\frac1{p_2}\right\}.
\]
He noted that the neighbouring formulas meet at $N^{-1/2}$, referred to an earlier
logarithmic example, and expected the unknown logarithmic order to depend on
$q_1,q_2$. The question concerns strictly compact Besov embeddings on bounded Lipschitz domains.
The later overview of Gon\c{c}alves, Haroske and
Skrzypczak~\cite[Remark~2.15]{ref15}
also excludes this transition from its power-order classification.
Thus ``critical'' here refers to a transition in the rate of approximation within the
strictly compact range.

Theorem~\ref{result:1.1} determines this logarithmic correction for every pair of
fine indices in the strictly compact Besov range above.

We use the restriction spaces $B^s_{p,q}(\Omega)$ defined in Section~\ref{sec:preliminaries}. For a bounded linear
operator $T:X\to Y$ between quasi-Banach spaces, its approximation numbers are
\[
a_N(T)=\inf\{\|T-A\|: A:X\to Y\text{ bounded linear},\ \operatorname{rank}A<N\},
\qquad N\in\mathbb N.
\]
We use $1/\infty=0$, $u_+=\max\{u,0\}$, and $p'=p/(p-1)$ for $1<p<\infty$,
$1'=\infty$, $\infty'=1$. For $0<p<1$, we also set $p'=\infty$ as an index convention;
this convention does not invoke Banach duality.

\begin{theorem}\label{result:1.1}
Let $\Omega\subset\mathbb R^d$ be a nonempty bounded Lipschitz domain,
where $d\geq1$. Let $s_1,s_2\in\mathbb R$, $0<p_1<2<p_2\leq\infty$ and
$0<q_1,q_2\leq\infty$. Suppose
\begin{equation*}
s_1-s_2=d\max\left\{\frac{1}{p_1},1-\frac{1}{p_2}\right\},
\end{equation*}
\begin{equation}
s_1-s_2>d\left(\frac{1}{p_1}-\frac{1}{p_2}\right).
\label{eq:2}
\end{equation}
Set $p'_1=p_1/(p_1-1)$ if $p_1>1$, and $p'_1=\infty$ if $p_1\leq1$.
Then $2<r=\min\{p'_1,p_2\}<\infty$, and the embedding
\[
I_\Omega:B^{s_1}_{p_1,q_1}(\Omega)\longrightarrow B^{s_2}_{p_2,q_2}(\Omega)
\]
is compact. Its approximation numbers satisfy
\begin{equation*}
a_N(I_\Omega)\asymp N^{-1/2}\bigl(\log(e+N)\bigr)^\Gamma,
\qquad N\geq1,
\end{equation*}
where $1/\infty=0$ and
\begin{equation}
\Gamma=
\begin{cases}
\displaystyle\frac12+\frac{1}{q_2}-\frac{1}{q_1},
&q_1\geq q_2,\\[5pt]
\displaystyle\frac12,
&q_1<q_2,\quad p'_1=p_2=r,\\[5pt]
\displaystyle\min\left\{\frac12,
\left(1-\frac{1}{q_1}\right)_+
+\left(\frac{1}{q_2}-\frac12\right)_+\right\},
&q_1<q_2,\quad p'_1=r<p_2,\\[5pt]
\displaystyle\min\left\{\frac12,
\frac{1}{q_2}+\left(\frac12-\frac{1}{q_1}\right)_+\right\},
&q_1<q_2,\quad p_2=r<p'_1.
\end{cases}
\label{eq:4}
\end{equation}
The comparison constants may depend on the domain, the dimension and the fixed indices,
but are independent of $N$.
\end{theorem}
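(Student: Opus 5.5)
We reduce the problem to sequence spaces by the standard wavelet (or quarkonial) discretisation on bounded Lipschitz domains. Extension operators of Rychkov type and restriction give $a_N(I_\Omega)\asymp a_N(\mathrm{id})$ up to constants and index shifts, where
\[
\mathrm{id}:b^{\sigma}_{q_1}(\ell_{p_1}^{M_j})\to b_{q_2}(\ell_{p_2}^{M_j}),\qquad M_j\asymp 2^{jd},\qquad \sigma=s_1-s_2-d\Bigl(\frac1{p_1}-\frac1{p_2}\Bigr)>0 .
\]
The relation $r=\min\{p_1',p_2\}$ and the bound $2<r<\infty$ follow directly from the hypotheses. In the critical case $s_1-s_2=d\max\{1/p_1,1-1/p_2\}$, the level-$j$ block $2^{-j\sigma}\mathrm{id}:\ell_{p_1}^{M_j}\to\ell_{p_2}^{M_j}$ then has approximation numbers $a_n\asymp M_j^{-1/2}\cdot(\dots)$ which, by Gluskin's estimates, equal $n^{-1/2}$ up to constants uniformly for $n\lesssim M_j$ on all levels with $2^{jd}\gtrsim n$. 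Thus the factor $N^{-1/2}$ comes for free, and all that remains is to count how many levels contribute. Compactness follows from the strict inequality \eqref{eq:2}.

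For the upper bound, fix $N$. Levels with $2^{jd}\le N/\log N$ are approximated exactly by the identity on those coordinates. Levels with $2^{jd}\ge N^{C}$ are discarded, because $2^{-j\sigma}$ decays geometrically. This leaves roughly $L\asymp\log N$ critical levels. On these levels we use Gaussian random finite-rank operators of rank about $N/L_0$ on each of $L_0$ grouped levels. The level-wise errors $N^{-1/2}L_0^{1/2}$ are then combined through Hölder's inequality between $\ell_{q_1}$ and $\ell_{q_2}$ over the $L$ levels. This produces the factor $L^{(1/q_2-1/q_1)_+}$ when $q_1\ge q_2$, and so the exponent $\tfrac12+\tfrac1{q_2}-\tfrac1{q_1}$. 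The trivial bound $\Gamma\le\tfrac12$ comes from a single Gaussian operator acting jointly on all critical levels, using that the block-diagonal embedding is dominated by $\ell_{p_1}\to\ell_{p_2}$ on $\asymp N$ coordinates. For $q_1<q_2$ with $\Gamma<1/2$, the refined exponents require a sharper multi-level Gaussian estimate. There one exploits that only the $p_1'$-side (respectively the $p_2$-side) is critical: the expected norm of a Gaussian matrix from $\ell_1$-type to $\ell_r$ factors through $\ell_{q_1}\to\ell_\infty$ or $\ell_1\to\ell_{q_2}$ mixed norms. This yields the terms $(1-1/q_1)_+$ or $1/q_2$ together with the truncation $(\,\cdot\,)_+$ at $1/2$.

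For the lower bound in the case $q_1\ge q_2$, and more generally when $q_2\le 1$, we use a row estimate. Restricting to the $L$ critical levels and testing $\mathrm{id}-A$ against a suitably averaged family of unit vectors shows that some row of $\mathrm{id}-A$ retains $\ell_{q_2}$-mass. Combining this with Gluskin's lower bound on each level gives $N^{-1/2}L^{1/2+1/q_2-1/q_1}$. For $q_1<q_2$ with $p_1'\neq p_2$, we instead invoke Malykhin's finite-window estimates for approximation numbers of mixed-norm embeddings $\ell_{q_1}^L(\ell_{p_1}^{M})\to\ell_{q_2}^L(\ell_{p_2}^{M})$. We then transfer them by norm comparisons: we replace the non-critical index by $2$ at a cost of constants only, since we work on windows of size $M\asymp N$. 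In the equal case $p_1'=p_2$, the lower bound $N^{-1/2}L^{1/2}$ follows from a direct embedding of $\ell_1^{L}\otimes$ Gaussian-type extremal subspaces.

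The main obstacle is the case $q_1<q_2$ with strict ordering of $p_1'$ and $p_2$. The upper bound needs a Gaussian construction spread over several levels whose error captures exactly $(1-1/q_1)_+$ plus $(1/q_2-1/2)_+$, or the symmetric quantity. The lower bound needs the Malykhin window estimate to match it, including in the quasi-Banach range $q_i<1$ and the $\ell_\infty$ endpoints. I would first verify both bounds on the model problem with $L$ levels of dimension $N$, where everything reduces to mixed-norm Gaussian matrix norms.
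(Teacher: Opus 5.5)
The central gap is in the lower bounds. An approximant $A$ with $\rank A<N$ may have nonzero blocks between different levels, and only its \emph{total} rank is controlled, so each diagonal block $A_{jj}$ may have rank up to $N-1$. Testing $\mathrm{id}-A$ against averaged signed unit vectors and invoking Gluskin's bound on each level therefore gives only $a_N(t_jI_{M_j})\gtrsim N^{-1/2}$ per level. Combining these over $L\asymp\log N$ levels yields $N^{-1/2}(\log N)^{(1/q_2-1/q_1)_+}$, which is the lower bound of Proposition~\ref{result:2.7} obtained from Lemma~\ref{result:2.3}. The missing factor $(\log N)^{1/2}$ is exactly what has to be proved when $q_1\ge q_2$. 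When $q_1<q_2$ and $p_1'=p_2$ it is the whole answer.

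To get it you need an argument that sees the rank budget being shared across levels. The paper uses a trace--rank argument (Lemma~\ref{result:trace-window}). On a window of geometrically separated levels with weights $b_sc_st_s=m_s^{-1}$, the condition $\|E\|<t_L/2$ forces $\operatorname{Re}\tr(B_SA_{SS}C_S)\ge|S|/2$, while $|\tr T|\le\sqrt{\rank T}\,\|T\|_{S_2}$. The lower bound then reduces to a Hilbert--Schmidt estimate $\|B_SE_{SS}C_S\|_{S_2}\lesssim\|E\|L^{1-\Gamma}$ on a common set $S$ of row and column levels. This estimate must hold for error matrices with arbitrary cross-level entries, and proving it is the real work:
\begin{itemize}
\item for $p_1'<p_2$, the row estimate (Lemma~\ref{result:3.4}, via convex minimization and conditional Gaussian test vectors);
\item for $p_2\le p_1'$, the column estimate (Lemma~\ref{result:3.2});
\item for $p_1'=p_2$, off-diagonal decay $3^{-|\ell-s|}$ of the weighted blocks.
\end{itemize}
The first two are taken at source fine index $\infty$ and target fine index below one, then transferred by norm comparisons. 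Your proposal has no substitute for any of these.

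Several other steps also fail as stated.

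\textbf{Changing inner indices.} Replacing the non-critical inner index by $2$ is not free, since $\|\ell_{p_2}^M\to\ell_2^M\|=M^{1/2-1/p_2}$. In lower bounds the cost-free moves are to a stronger source norm or a weaker target norm (e.g.\ $\ell_1\hookrightarrow\ell_{p_1}$), together with finite-dimensional adjoints.

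\textbf{Malykhin's estimate.} It is not an approximation-number estimate for equal blocks. It is a Kolmogorov-width bound for products of level octahedra in the norm $Z$, and you must then pass through $\|x\|_Z\le L^{(1/2-1/s)_+}\|x\|_{Y_s}$ and the inclusions between the sets $K_\theta$.

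\textbf{Your model problem.} $L$ blocks of common dimension $N$ do not capture the critical problem. For $q_1=p_1=r'$ and $q_2=p_2=r$ with the balanced weight $N^{-1/r}$, that model is a single embedding $\ell_{r'}^{LN}\to\ell_r^{LN}$. Its $N$th approximation number is $\asymp L^{1/r}N^{-1/2}$, not $L^{1/2}N^{-1/2}$. The logarithm comes from the $\asymp\log N$ levels whose dimensions range from $\asymp N$ to $\asymp N^{r/2}$. This also undermines your justification of $\Gamma\le1/2$: those levels carry $\asymp N^{r/2}$ coordinates, not $\asymp N$. The bound $\Gamma\le1/2$ comes simply from approximating the levels separately.

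\textbf{Refined upper bounds.} These are only announced. The paper uses Chevet's inequality for the factorization through $\ell_2$, with the width bound $w(C)\lesssim L^{1-1/q_1}$ and the norm $\|B\|=L^{(1/q_2-1/2)_+}$. It also partitions the window into boundedly many consecutive groups with $b\le\theta a$ and $1<\theta<p_2/r$, so that the cross term $2^{b/p_2-a/r}$ stays bounded. A single Gaussian acting on all critical levels does not achieve this for finite $p_2$. The case $p_2<p_1'$ then follows by finite-dimensional adjoints.
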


The normalized smoothness gap is
\begin{equation*}
\delta=s_1-s_2-\frac{d}{p_1}+\frac{d}{p_2}=\frac{d}{r}>0.
\end{equation*}
The corner $p_1\leq1$, $p_2=\infty$ has $\delta=0$ and is excluded by~\eqref{eq:2}.
The comparison constants are taken for fixed parameters and need not be uniform as
an endpoint is approached.

The dependence on the inner indices is already visible at $(q_1,q_2)=(1,2)$.
The pairs $(p_1,p_2)=(4/3,6),(1,4),(4/3,4)$ all have $r=4$, but the logarithmic
exponents are $0,1/2,1/2$, respectively. Thus the fine indices and the common value of $r$
do not determine the logarithmic correction: the position of $p_1'$ relative to $p_2$ matters.

\subsection{Earlier results and the proof}

Skrzypczak and Vyb\'iral~\cite[equation~(1)]{ref4}, following Kulanin, give a critical
lower bound of order $N^{-1/2}(\log(e+N))^{1/q}$ for inner indices $1,p$, $2<p<\infty$,
and fine indices $\infty,q$, $1\leq q\leq\infty$.
Malykhin~\cite[Theorem~1]{ref13} determines the Kolmogorov widths of $B^1_{1,\theta}[0,1]$
in $L_p[0,1]$. Related work~\cite{ref14} determines the Kolmogorov widths of $W^1_1[0,1]$ in Lebesgue spaces.
Dota's weighted embedding results~\cite[Theorems~21 and~26]{ref9}
require strict auxiliary growth conditions that fail for the critical geometric
weights here. Cobos and K\"uhn~\cite{ref10} treat generalized smoothness with a common
inner integrability index. Spatial logarithmic weights~\cite{ref12} and periodic
spaces with dominating mixed smoothness~\cite{ref11} give other related settings.

The wavelet reduction leads to the block map
\[
D_{q_1,q_2}:\left(\bigoplus_{j\geq0}\ell_{p_1}^{M_j}\right)_{\ell_{q_1}}
\longrightarrow\left(\bigoplus_{j\geq0}\ell_{p_2}^{M_j}\right)_{\ell_{q_2}},
\qquad (Dx)_j=2^{-j/r}x_j,\quad M_j\asymp2^j.
\]
Section~\ref{sec:preliminaries} gives the required block estimates, whose proofs
are collected in Appendix~\ref{app:tools}. These estimates also cover $p_1'=p_2$.

For $q_1<q_2$ and $p_1'\ne p_2$, Appendix~\ref{app:malykhin} gives matching lower bounds
by applying the finite-window estimate in Malykhin's proof, together with norm comparisons,
inclusions, convex hulls of the source balls and finite-dimensional adjoints.
When $\Gamma=1/2$, approximating each level separately gives the matching upper bound.
When $\Gamma<1/2$, Section~\ref{sec:forward} constructs Gaussian finite-rank approximants
that act on groups of levels and attain the smaller exponent. For $p_1'<p_2$, the strict
gap between the inner indices controls the covariance error at the prescribed rank;
finite-dimensional adjoints give the upper bounds for $p_2<p_1'$.

For $q_1\geq q_2$, we obtain the lower bounds from finite-window estimates with source
fine index $\infty$ and target fine index $0<q<1$.
An approximant may connect levels, so the estimates must use its total rank.
When $p_1'<p_2$, the row estimate in Section~\ref{sec:row} controls the weighted error
matrix after compression to the same set of row and column levels. Convex optimization
and a conditional Gaussian argument give this estimate, and the rank--trace inequality
then gives the lower bound. When $p_2\leq p_1'$, the column estimate in
Appendix~\ref{app:tools} serves the same purpose. Norm comparisons on these windows
give the lower bounds for all $q_1\geq q_2$, matching the upper bound obtained by
approximating each level separately.

Section~\ref{sec:complete-classification} combines the sequence estimates, including the full
spaces of bounded block sequences at infinite fine indices.
Coordinate factorizations, wavelets and extension operators then transfer
the result to bounded Lipschitz domains in Section~\ref{sec:domain-transfer}.

\section{The normalized sequence problem}
\label{sec:preliminaries}

Fix a smooth dyadic resolution $(\varphi_j)_{j\ge0}$. For a tempered distribution $F$, put
$\Delta_jF=\mathcal F^{-1}(\varphi_j\mathcal FF)$. We equip the inhomogeneous Besov spaces with the quasi-norm
\[
 \|F\|_{B^s_{p,q}(\mathbb R^d)}
 =\left\|\left(2^{js}\|\Delta_jF\|_{L^p(\mathbb R^d)}\right)_{j\ge0}\right\|_{\ell^q},
 \qquad s\in\mathbb R,\quad 0<p,q\le\infty,
\]
and define the restriction spaces by
\begin{equation*}
 B^s_{p,q}(\Omega)=\{F|_\Omega:F\in B^s_{p,q}(\mathbb R^d)\},\qquad
 \|f\|_{B^s_{p,q}(\Omega)}=\inf_{F|_\Omega=f}\|F\|_{B^s_{p,q}(\mathbb R^d)}.
\end{equation*}
These definitions include negative smoothness and infinite indices.

For blocks $E_j$, write $(\bigoplus_jE_j)_{\ell^q}$ for the space with quasi-norm
$\|(\|x_j\|_{E_j})_j\|_{\ell^q}$. For an infinite family, we assume that the blocks are complete and
satisfy a quasi-triangle inequality with a constant independent of $j$; this holds for all fixed-$p$
blocks $\ell_p^{M_j}$ below. The resulting sums are quasi-Banach spaces, and their
coordinate injections and projections are contractions. At $q=\infty$ we always use
the full space of bounded block sequences. We use the ideal property
$a_N(STR)\le\|S\|a_N(T)\|R\|$, valid also between quasi-Banach spaces.
Implicit constants are independent of $N$ and of the finite window; set
$\Lambda_N=\log(e+N)$.

Throughout the sequence arguments,
\[
 0<p_1<2<p_2\le\infty,\qquad
 2<r=\min\{p_1',p_2\}<\infty,\qquad 0<q_1,q_2\le\infty.
\]
For $M_j\asymp2^j$ consider
\begin{equation}\label{eq:7}
 D:\left(\bigoplus_{j\ge0}\ell_{p_1}^{M_j}\right)_{\ell^{q_1}}
 \longrightarrow\left(\bigoplus_{j\ge0}\ell_{p_2}^{M_j}\right)_{\ell^{q_2}},
 \qquad (Dx)_j=t_jx_j,\quad t_j=2^{-j/r}.
\end{equation}
We write $D_{q_1,q_2}^{\mathcal W}$ for its compression to a finite set of levels
$\mathcal W$, retaining the inner indices unless specified otherwise. Put
\begin{equation*}
 \chi_q=\left(\frac1{q_2}-\frac1{q_1}\right)_+,\qquad \frac1\omega=\chi_q.
\end{equation*}
Here $\omega=\infty$ if $\chi_q=0$. The scalar multiplier formula
\[
 \|\operatorname{diag}(e):\ell^{q_1}\to\ell^{q_2}\|=\|e\|_{\ell^\omega}
\]
holds for nonnegative $e$. It follows from H\"older's inequality applied to the $q_2$-powers when $q_2<q_1$
and from the supremum formula otherwise. Since $\|\ell_{p_1}^{M_j}\hookrightarrow
\ell_{p_2}^{M_j}\|=1$ and $(t_j)$ decays geometrically, $D$ is bounded and compact:
the error obtained by reproducing the first finitely many blocks tends to zero.

We collect the auxiliary estimates needed below. Their
proofs, including the finite-window versions used in the lower bounds, are in
Appendix~\ref{app:tools}.

\begin{lemma}\label{result:2.1}
For $0<p_1<2<p_2\le\infty$ and $r=\min\{p_1',p_2\}<\infty$,
\begin{equation}\label{eq:9}
 a_n(\ell_{p_1}^M\hookrightarrow\ell_{p_2}^M)
 \asymp\min\{1,M^{1/r}n^{-1/2}\},\qquad 1\le n\le M/2.
\end{equation}
The constants depend only on the inner indices.
\end{lemma}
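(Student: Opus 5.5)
The plan is to reduce the statement to the classical finite-dimensional results of Gluskin on approximation numbers of the embeddings $\ell_p^M\hookrightarrow\ell_q^M$ for $1\le p<2<q\le\infty$, after first removing the quasi-Banach range $p_1<1$.

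\emph{Step 1 (reduction to $p_1\ge1$).} The target $\ell_{p_2}^M$ is a Banach space because $p_2>2$. For any linear $S:\mathbb R^M\to\ell_{p_2}^M$ and $0<p_1\le1$, the $p_1$-triangle inequality together with $|x_i|\le\|x\|_{p_1}$ gives
\[
\|S:\ell_{p_1}^M\to\ell_{p_2}^M\|=\max_i\|Se_i\|_{p_2}=\|S:\ell_1^M\to\ell_{p_2}^M\|.
\]
Applying this to $S=\Id-A$ with $\operatorname{rank}A<n$ gives
\[
a_n(\ell_{p_1}^M\hookrightarrow\ell_{p_2}^M)=a_n(\ell_1^M\hookrightarrow\ell_{p_2}^M).
\]
With the convention $p_1'=\infty$, the index $r=p_2$ is unchanged. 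From now on we may therefore assume $1\le p_1<2<p_2\le\infty$ with $r<\infty$.

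\emph{Step 2 (Gluskin's theorem).} For $1\le p<2<q\le\infty$ and $1\le n\le M$, Gluskin's estimate reads
\[
a_n(\ell_p^M\hookrightarrow\ell_q^M)\asymp
\max\Bigl\{M^{1/q-1/p},\ \min\{1,M^{1/\min\{p',q\}}n^{-1/2}\}\,\sqrt{1-\tfrac{n}{M}}\Bigr\}.
\]
For $q\le p'$ this comes from the Kashin--Gluskin Gaussian/random-subspace constructions. For $p'\le q$ it follows by duality, using $a_n(T)=a_n(T')$ in finite dimensions and $(\ell_p^M\hookrightarrow\ell_q^M)'=\ell_{q'}^M\hookrightarrow\ell_{p'}^M$. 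The case $p'=q$ is covered by either branch.

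In the range $n\le M/2$ the factor $\sqrt{1-n/M}$ lies in $[2^{-1/2},1]$. The first term is absorbed by the second. Indeed, $1/p_1>1/2$ gives $M^{1/p_2-1/p_1}\le M^{1/p_2-1/2}$, and
\[
M^{1/p_2-1/2}\le M^{1/r}n^{-1/2}
\iff n^{1/2}\le M^{1/2+1/r-1/p_2},
\]
which holds because $1/r\ge1/p_2$ and $n\le M$. Also $M^{1/p_2-1/2}\le1$. This gives \eqref{eq:9}, with constants depending only on $p_1,p_2$.

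\emph{Main obstacle.} The real work sits in the lower bound $a_n\gtrsim\min\{1,M^{1/r}n^{-1/2}\}$; the paper says it collects proofs in Appendix~\ref{app:tools} rather than only citing. If a self-contained argument is wanted, I would first treat the case $r=p_2\le p_1'$ by a trace/averaging argument. Let $B=\Id-A$ with $\operatorname{rank}A<n$, so $\tr B\ge M-n\ge M/2$. Averaging $B$ over signed permutation matrices does not increase $\|B:\ell_{p_1}\to\ell_{p_2}\|$, preserves the trace, and replaces $B$ by $(\tr B/M)\,\Id$. That alone loses the rank information, so instead I would compare $\tr B=\tr(\Id)-\tr A$ with a Gaussian expectation $\mathbb E\langle Bg,g\rangle$. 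The terms $\langle Ag,g\rangle$ are controlled through $n$-dimensionality, via $\mathbb E\|P g\|_2^2=\rank P$ for the relevant projection. The terms $\langle Bg,g\rangle$ are controlled by $\|B\|\,\mathbb E\|g\|_{p_1}^{\vphantom{x}}\cdot$ a dual $\ell_{p_2'}$-type factor. This should yield $\|B\|\gtrsim M^{1/p_2}n^{-1/2}$.

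The case $p_1'<p_2$ then follows by duality as in Step~2. I expect that balancing of Gaussian moments to be the delicate point; if it resists, I would simply invoke Gluskin's lower bound.
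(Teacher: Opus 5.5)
Your proof is essentially the paper's: the same reduction of $p_1<1$ to $p_1=1$ via $\|F:\ell_{p_1}^M\to\ell_{p_2}^M\|=\max_k\|Fe_k\|_{p_2}$, Gluskin's finite-dimensional formula (Vyb\'iral's Lemma~3.2 in the paper) for $p_2\le p_1'$ with finite-dimensional duality for $p_2>p_1'$, and the same absorption of $\sqrt{1-n/M}$ and of $M^{1/p_2-1/p_1}$ into the minimum.

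Your optional self-contained sketch has two problems. It rests on the false claim $\tr B\ge M-n$: a rank-one $A$ can already have trace $M$. It also puts the difficulty on the wrong side, because the lower bound is the elementary half. With $\varepsilon=\|B\|$ one has $\operatorname{Re}\tr A\ge(1-\varepsilon)M$ and $|\tr A|\le\sqrt n\,\|A\|_{S_2}\le\sqrt n\,(\sqrt M+M^{1-1/p_2}\varepsilon)$, which gives $\varepsilon\gtrsim\min\{1,M^{1/p_2}n^{-1/2}\}$ for $n\le M/2$ (and the $M^{1/p_1'}$ version by duality). Gluskin's random construction is what is really needed for the upper bound.
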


\begin{lemma}\label{result:2.2}
For the map \eqref{eq:7}, $a_N(D)>0$ for every $N\ge1$.
\end{lemma}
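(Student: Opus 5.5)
We show that $D$ cannot be approximated with zero error by any operator of rank less than $N$. The tool is a finite-dimensional subspace on which every operator of rank $<N$ must vanish somewhere. Fix $N\ge1$ and choose a level $j$ with $M_j\ge N$; this is possible because $M_j\asymp2^j\to\infty$. Let $\iota_j:\ell_{p_1}^{M_j}\to\bigl(\bigoplus_k\ell_{p_1}^{M_k}\bigr)_{\ell^{q_1}}$ be the coordinate injection and $\pi_j$ the coordinate projection onto the $j$-th target block. Both are contractions, as noted in Section~\ref{sec:preliminaries}.

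Let $A$ be bounded linear with $\rank A<N$. The operator $\pi_jA\iota_j$ acts on the $M_j$-dimensional space $\ell_{p_1}^{M_j}$ and has rank $<N\le M_j$. Its kernel is therefore nontrivial, and we can pick $x\in\ell_{p_1}^{M_j}$ with $\|x\|_{p_1}=1$ and $\pi_jA\iota_jx=0$. Then
\[
\pi_j(D-A)\iota_jx=t_jx-0=t_jx .
\]
Because $\pi_j$ is a contraction and the inner norms are comparable,
\[
\|(D-A)\iota_jx\|\ge\|t_jx\|_{p_2}\ge t_j M_j^{\min\{0,1/p_2-1/p_1\}}\|x\|_{p_1}=t_jM_j^{1/p_2-1/p_1}>0 .
\]
Here we used $\|x\|_{p_1}\le M_j^{(1/p_1-1/p_2)_+}\|x\|_{p_2}$, which holds for all quasi-norm indices by Hölder's inequality, or by monotonicity when $p_1\le p_2$. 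We also have $\|\iota_jx\|=1$.

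The lower bound $c_j=t_jM_j^{1/p_2-1/p_1}$ depends only on $j$ and not on $A$. Taking the infimum over $A$ therefore gives $a_N(D)\ge c_j>0$.

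No step here is a real obstacle. The only subtleties are these. First, the quasi-Banach setting does not affect the kernel argument, which is pure linear algebra on a finite-dimensional block. Second, at $q_1$ or $q_2=\infty$ the full bounded-sequence spaces still contain the finitely supported element $\iota_jx$, and the coordinate maps remain contractions there.
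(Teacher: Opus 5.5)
Your argument is correct, but it is more elementary than the paper's. The paper chooses $j$ with $M_j\ge 2N$, uses $t_jI_{M_j}=\pi_jD\iota_j$ and the ideal property to get $a_N(D)\ge t_j\,a_N(\ell_{p_1}^{M_j}\hookrightarrow\ell_{p_2}^{M_j})$, and then reads off positivity from the two-sided estimate of Lemma~\ref{result:2.1} in the range $N\le M_j/2$. That lemma in turn rests on Vyb\'iral's finite-dimensional formula, adjoint identities and the $\ell_1$ reduction for $p_1<1$.

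You instead use rank--nullity to find a unit vector in the kernel of $\pi_jA\iota_j$. You then bound the error below by the H\"older comparison $\|x\|_{p_1}\le M_j^{1/p_1-1/p_2}\|x\|_{p_2}$, which gives the explicit but crude bound $a_N(D)\ge t_jM_j^{1/p_2-1/p_1}$.

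Your version is self-contained, needs only $M_j\ge N$, and works for arbitrary inner indices. The paper's version costs one line because Lemma~\ref{result:2.1} is needed anyway, and it yields the much sharper bound $t_j\min\{1,M_j^{1/r}N^{-1/2}\}$, although only positivity is used here.

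One small slip: in the present case $p_1<p_2$ the comparison you use is H\"older's inequality. Monotonicity of $\ell_p$-norms gives the factor-one inequality when $p_1\ge p_2$, not when $p_1\le p_2$. This does not affect your argument.
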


\begin{proposition}\label{result:2.7}
The map \eqref{eq:7} satisfies
\begin{equation}\label{eq:26}
 N^{-1/2}\Lambda_N^{\chi_q}\lesssim a_N(D)
 \lesssim N^{-1/2}\Lambda_N^{\chi_q+1/2},\qquad N\ge2.
\end{equation}
\end{proposition}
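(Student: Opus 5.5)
For the upper bound we approximate each level separately, with ranks chosen to balance the errors. Given $N\ge2$, set $L=\lceil\log_2 N\rceil$ and $J=L+C$ for a constant $C$ to be fixed. The levels $j>J$ are discarded. On level $j\le J$ we use the optimal rank-$(n_j-1)$ approximant of $\ell_{p_1}^{M_j}\hookrightarrow\ell_{p_2}^{M_j}$ from Lemma~\ref{result:2.1}, which has error $\lesssim\min\{1,M_j^{1/r}n_j^{-1/2}\}$. For $j$ with $M_j\lesssim N/\Lambda_N$ we take $n_j\asymp M_j$, so the identity is reproduced exactly and the error is zero. For the remaining levels, of which there are $O(\log\Lambda_N)+O(1)$ up to level $J$, we take $n_j\asymp N/\Lambda_N$. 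Then the total rank is $\lesssim N$, and after rescaling $N$ this costs only constants.

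On level $j$ the weighted error is $t_j M_j^{1/r}n_j^{-1/2}\asymp (N/\Lambda_N)^{-1/2}$, because $t_jM_j^{1/r}\asymp1$; this is exactly the role of $r$. The tail $j>J$ contributes $t_j\le 2^{-J/r}\lesssim N^{-1/2}$ as soon as $J\ge (r/2)\log_2N$, so we take $J\asymp (r/2)\log_2 N$. The diagonal error operator then has block norms $e_j\lesssim (N/\Lambda_N)^{-1/2}$ on $O(\Lambda_N)$ levels, since $\log N$ levels lie between $M_j\asymp N/\Lambda_N$ and $J$, and $e_j\lesssim 2^{-j/r}$ beyond $J$. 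Because the error is block diagonal, its norm is at most a constant times $\|e\|_{\ell^\omega}$ by the scalar multiplier formula. This gives $\|e\|_{\ell^\omega}\lesssim N^{-1/2}\Lambda_N^{1/2}\Lambda_N^{1/\omega}$, which is the claimed upper bound. In the quasi-Banach case the quasi-triangle constants enter only as fixed factors, because the error is computed blockwise.

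For the lower bound we fix a window $\mathcal W$ of about $\Lambda_N$ consecutive levels $j$ with $2^j\asymp N^{\,\cdot}$ chosen so that $t_jM_j^{1/r}\asymp1$ and $M_j\ge 4N$; for instance take $j\in[\log_2(4N),\,2\log_2(4N)]$. By the ideal property $a_N(D)\ge a_N(D^{\mathcal W}_{q_1,q_2})$. Without the $\chi_q$ factor, the bound $a_N\gtrsim N^{-1/2}$ already follows from a single level: by Lemma~\ref{result:2.1} with $n=N\le M_j/2$ we get $a_N\gtrsim t_jM_j^{1/r}N^{-1/2}\asymp N^{-1/2}$. So the extra factor $\Lambda_N^{1/q_2-1/q_1}$ is needed only when $q_2<q_1$. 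In that case we use that an approximant $A$ of rank $<N$ spreads its rank over the window. By a pigeonhole argument, a positive proportion of the $\asymp\Lambda_N$ levels have compressed rank $P_jAP_j<N'$ for a suitable $N'\asymp N$. On each such level the diagonal block error $\|P_j(D-A)P_j\|\gtrsim N^{-1/2}$.

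Testing on vectors $x=(\lambda_jx_j)$ with $x_j$ near-extremal for the block error and $\lambda\in\ell^{q_1}$ then gives $\|D-A\|\gtrsim N^{-1/2}\|(\text{block errors})\|_{\ell^\omega}\gtrsim N^{-1/2}\Lambda_N^{1/\omega}$. This step is the main obstacle. Off-diagonal parts of $A$ could cancel the diagonal contributions, because compressions of $A$ interact. The plan is to control this by averaging over independent random signs $\varepsilon_j$ on the blocks, $x=\sum\varepsilon_j\lambda_jx_j$, and to use the triangle (or $q_2$-power) inequality in the target together with a Khintchine-type or contraction estimate to isolate the diagonal terms. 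If signs are insufficient for $q_2<1$, the fallback is to invoke the finite-window estimates of Appendix~\ref{app:tools}, which is what the paper indicates.

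Finally, the proposition is stated for $N\ge2$. For the smallest $N$, Lemma~\ref{result:2.2} together with boundedness of $D$ covers the range where constants are absorbed.
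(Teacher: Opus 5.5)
Your upper bound is the paper's separate-level construction, once you commit to $J\ge(r/2)\log_2N$. Drop the initial choice $J=\lceil\log_2N\rceil+C$ and the resulting count of $O(\log\Lambda_N)$ middle levels: with $J\approx\log_2N$ the tail is $\asymp N^{-1/r}\gg N^{-1/2}$. With the corrected $J$ there are $\asymp\Lambda_N$ middle levels, as you later say. The lower bound has a concrete error. You apply Lemma~\ref{result:2.1} as $a_N(t_jI_{M_j})\gtrsim t_jM_j^{1/r}N^{-1/2}$, dropping the $\min\{1,\cdot\}$. The correct consequence is $a_N(t_jI_{M_j})\asymp\min\{2^{-j/r},N^{-1/2}\}$, so a level contributes $\gtrsim N^{-1/2}$ only when $M_j\lesssim N^{r/2}$. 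Your sample window $j\in[\log_2(4N),2\log_2(4N)]$ reaches $M_j\asymp N^2$. For $2<r<4$ (e.g.\ $p_1=4/3$, $p_2=3$) this lies far above $N^{r/2}$, and there the block numbers are only $\asymp N^{-2/r}\ll N^{-1/2}$. The fix is the paper's window $\{j:4N\le M_j\le N^{r/2}\}$, which still has $\asymp\Lambda_N$ levels because $r>2$.

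The second problem is that the decoupling step, which you call the main obstacle, is left as a plan with a fallback. In fact your sign idea works directly for every $q_2$, and no pigeonhole is needed. Each compression $A_{jj}=P_jA\iota_j$ has rank at most $\rank A<N$, so $\|t_jI-A_{jj}\|\ge a_N(t_jI_{M_j})\ge d$, with $d\asymp N^{-1/2}$, on \emph{every} window level. Pick unit vectors $x_j$ with $\|(t_jI-A_{jj})x_j\|\ge d/2$. Pick weights $c_j\ge0$ with $\|c\|_{q_1}=1$ and $\|c\|_{q_2}=L^{\chi_q}$ (constant weights if $q_2<q_1$), and set $x=\sum_jc_j\varepsilon_j\iota_jx_j$. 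Conditionally on the other signs, the $j$-th output block is $\varepsilon_ja_j+v_j$, where $a_j=c_j(t_jI-A_{jj})x_j$ and $v_j$ does not depend on $\varepsilon_j$. The triangle inequality in the Banach block $\ell_{p_2}^{M_j}$ gives $\|a_j+v_j\|+\|a_j-v_j\|\ge2\|a_j\|$, so one sign gives norm at least $c_jd/2$. Hence $\mathbb E\|(D-A)x\|^{q_2}\ge2^{-1-q_2}d^{q_2}\sum_jc_j^{q_2}$. Since $\|x\|\le1$ for every choice of signs, $\|D-A\|\gtrsim dL^{\chi_q}$. Only the triangle inequality inside each target block is used, with no Khintchine inequality and no triangle inequality for the outer sum, so $q_2<1$ is covered. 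At $q_2=\infty$ one has $\chi_q=0$ and a single block suffices. This is the paper's Lemma~\ref{result:2.3}; with these two repairs your argument coincides with the paper's proof.
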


\begin{proposition}\label{result:2.8}
If
\begin{equation*}
 p_1=r',\qquad p_2=r,\qquad 2<r<\infty,
\end{equation*}
then, for all $0<q_1,q_2\le\infty$,
\begin{equation*}
 a_N(D)\asymp N^{-1/2}\Lambda_N^{\Gamma_q},\qquad
 \Gamma_q=\chi_q+\frac12,\qquad N\ge2.
\end{equation*}
\end{proposition}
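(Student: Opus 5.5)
Since Proposition~\ref{result:2.7} already gives the upper bound $a_N(D)\lesssim N^{-1/2}\Lambda_N^{\chi_q+1/2}$ for all inner indices, including $p_1=r'$, $p_2=r$, only the matching lower bound $a_N(D)\gtrsim N^{-1/2}\Lambda_N^{\chi_q+1/2}$ remains. We obtain it by duality on a finite window. The plan is to fix $N$, choose $K\asymp\log_2 N$, and take the window $\mathcal W=\{j: J\le j<J+K\}$ with $2^J\asymp N$. On this window $t_j^2M_j\asymp 2^{j(1-2/r)}$, and the weights are arranged so that every level contributes comparably at rank $N$. The ideal property with the coordinate injection and projection gives $a_N(D)\ge a_N(D^{\mathcal W}_{q_1,q_2})$.

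Next we reduce to Hilbertian inner indices. For $p_1=r'$ and $p_2=r$ the map $D^{\mathcal W}$ is, up to the diagonal weights, self-dual: its adjoint $\ell_{r}\to\ell_{r'}$ has the same form. We use the rank--trace inequality. For any $A$ with $\operatorname{rank}A<N$, test $D^{\mathcal W}-A$ against a bilinear form $\operatorname{tr}(B(D^{\mathcal W}-A))$, where $B=\operatorname{diag}(\text{weights})$ acts blockwise as a scaled identity. Then $|\operatorname{tr}(BA)|\le N\|BA\|_{\mathrm{op}}$ controls the approximant, while $\operatorname{tr}(BD^{\mathcal W})$ grows like a sum over the $K$ levels. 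We choose the block weights $b_j$ to optimize the ratio. The $\ell^{q_1}\to\ell^{q_2}$ multiplier formula with exponent $\omega$ produces the factor $K^{\chi_q}$. The extra $K^{1/2}$ should come from an orthogonality/Gaussian-averaging step: use a random sign or Gaussian test matrix $G$ on each level, so that $\mathbb E\|G\|_{\ell_{r'}^{M}\to\ell_r^{M}}$ behaves like $M^{1/r}\cdot M^{1/2-1/r'}$. Cross-level cancellation then gives a square-function gain of $\sqrt K$ over the $K$ levels rather than the $1$ obtained from a single level.

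The main obstacle is that $A$ may couple levels, so level-by-level Lemma~\ref{result:2.1} only yields $\Lambda_N^{\chi_q}$. To handle this we would use the fact that for $p_1'=p_2$ the column estimate in Appendix~\ref{app:tools} applies, as stated in the introduction, and pair it with its dual (row) version. The key inequality to aim for is
\[
 \|D^{\mathcal W}-A\|\gtrsim N^{-1/2}K^{\chi_q}\Bigl(\sum_{j\in\mathcal W}1\Bigr)^{1/2}.
\]
Its proof proceeds by testing the error on a Gaussian vector $g=\sum_j c_j g_j$ supported on all levels, with $c_j$ normalized in $\ell^{q_1}$. We then estimate $\mathbb E\|(D^{\mathcal W}-A)g\|$ from below via the trace of $PA$ for the coordinate projection $P$. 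Since $\operatorname{tr}(P_jA)$ summed over $j$ is at most $\operatorname{rank}A<N$, most levels carry little of $A$'s trace, and on those levels the error remains of full size.

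Finally, we combine the estimate with $K\asymp\Lambda_N$ and monotonicity in $N$ (for $N\ge2$) to conclude $a_N(D)\gtrsim N^{-1/2}\Lambda_N^{\chi_q+1/2}$. Together with Proposition~\ref{result:2.7}, this gives $\Gamma_q=\chi_q+1/2$.
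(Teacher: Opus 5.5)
Your upper bound via Proposition~\ref{result:2.7} is fine. You also locate the difficulty correctly: the approximant may couple levels, so Lemma~\ref{result:2.1} applied level by level gives only $\Lambda_N^{\chi_q}$. The mechanism you propose for the extra $\Lambda_N^{1/2}$, however, does not work.

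\textbf{The trace step.} An operator-norm trace bound such as $|\tr(BA)|\le N\|BA\|_{\mathrm{op}}$ is too weak. Already on one level, take $tI_m:\ell_{r'}^m\to\ell_r^m$ with $t=m^{-1/r}$ and $m\ge4N$. If $\|E\|<t/2$, then $\operatorname{Re}\tr A\ge tm/2$. Combining this with $|\tr A|\le N(t+\|E\|_{2\to2})$ and $\|E\|_{2\to2}\le m^{1-2/r}\|E\|$ yields only $\|E\|\gtrsim m^{1/r}/N$, which is far below $N^{-1/2}$ unless $m\asymp N^{r/2}$.

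\textbf{The rank claim.} The inequality $\sum_j\tr(P_jA)\le\rank A$ is false for a general finite-rank $A$. It holds for projections, but not for an approximant of $D$, whose trace is close to $\tr D$. So ``most levels carry little of $A$'s trace'' has no basis, and the Gaussian averaging and cross-level cancellation that is supposed to give $\sqrt K$ is not backed by any estimate. Incidentally, Chevet's inequality gives $\mathbb E\|G:\ell_{r'}^M\to\ell_r^M\|\asymp M^{1/r}$, not the value you state.

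\textbf{What the paper does for $q_1\le q_2$.} It uses the Hilbert--Schmidt form $|\tr T|\le\sqrt{\rank T}\,\|T\|_{S_2}$ through Lemma~\ref{result:trace-window}, with weights $\lambda_s=(m_st_s)^{-1/2}$ and $\Lambda=\bigoplus_s\lambda_sI_{m_s}$. The key input is $\|\Lambda E\Lambda\|_{S_2}\lesssim L^{1/2}\|E\|$ for \emph{every} error matrix, including entries between levels. This follows by applying the Euclidean column bound to $E_{\ell s}$ and to its adjoint, which has the same form precisely because $p_1=r'$ and $p_2=r$. One gets $\|E_{\ell s}\|_{S_2}^2\le\min\{m_sm_\ell^{1-2/r},m_\ell m_s^{1-2/r}\}e_{\ell s}^2$, so the weighted blocks decay like $3^{-|\ell-s|}$. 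The gain $\sqrt L$ then comes from comparing $\tr(\Lambda D\Lambda)=L$, which grows linearly, with the Hilbert--Schmidt norm of the error, which grows like $L^{1/2}$. Your remarks about self-duality and pairing column and row bounds point in this direction, but you never formulate this estimate.

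\textbf{The case $q_1>q_2$.} Here there is a second gap. The factor $K^{\chi_q}$ from the scalar multiplier formula is available only in the level-by-level form of Lemma~\ref{result:2.3}. Nothing in your plan combines it with the gain from sharing the total rank across levels against an approximant that couples them. The paper obtains both at once from the column estimate of Proposition~\ref{result:3.3}, which covers $p_2=p_1'$. Take source fine index $\infty$ and target fine index $q_*=\min\{1/2,q_2\}<1$. Lemma~\ref{result:3.2} uses a matching--cover deletion of a few levels and a lower Khintchine bound to give $a_N(D^{\mathcal W_N}_{\infty,q_*})\gtrsim N^{-1/2}L^{1/2+1/q_*}$ on a window of $L\asymp\Lambda_N$ levels. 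Then factor $D^{\mathcal W_N}_{\infty,q_*}$ through $D^{\mathcal W_N}_{q_1,q_2}$, using the source inclusion $\ell^\infty\to\ell^{q_1}$ of norm $L^{1/q_1}$ and the target inclusion $\ell^{q_2}\to\ell^{q_*}$ of norm $L^{1/q_*-1/q_2}$. This gives the exponent $1/2+1/q_2-1/q_1$.
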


\begin{proposition}\label{result:3.3}
If $p_2=r\le p_1'$ and $0<q_2\le1$, then
\begin{equation*}
 a_N(D)\asymp N^{-1/2}\Lambda_N^{\chi_q+1/2},\qquad N\ge2.
\end{equation*}
For each $0<q\le1$, the proof of the lower bound gives a window $\mathcal W_N$ of
$L\asymp\Lambda_N$ levels such that
\begin{equation}\label{eq:column-window}
 a_N(D_{\infty,q}^{\mathcal W_N})\gtrsim N^{-1/2}L^{1/2+1/q}.
\end{equation}
The approximants in both statements may have arbitrary entries between different levels.
\end{proposition}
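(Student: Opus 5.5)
The upper bound needs no new work: Proposition~\ref{result:2.7} already gives $a_N(D)\lesssim N^{-1/2}\Lambda_N^{\chi_q+1/2}$. So the task is the lower bound. I would first prove the window estimate~\eqref{eq:column-window} for source fine index $\infty$ and target fine index $0<q\le1$. Then I would transfer it to general $(q_1,q_2)$ by norm comparisons on the window.

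\emph{Window.} Given $N\ge2$, I would choose $L\asymp\Lambda_N$ and a block of consecutive levels $\mathcal W_N=\{j_0,\dots,j_0+L-1\}$ with $2^{j_0}\asymp N$. This places every level in the regime of Lemma~\ref{result:2.1} where $a_n(\ell_{p_1}^{M_j}\hookrightarrow\ell_{p_2}^{M_j})\asymp M_j^{1/r}n^{-1/2}$. Since $p_2=r$, the weights satisfy $t_jM_j^{1/r}\asymp1$, so each level contributes the same amount $\asymp n_j^{-1/2}$ when it receives rank $n_j$.

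\emph{Column estimate.} Let $A$ have rank $<N$ with arbitrary entries between levels, and let $E=D^{\mathcal W_N}-A$ be compressed to the window. For each level $j$, I would test $E$ on random $\ell_{p_1}$-normalised inputs supported on level $j$: signed sums of a few coordinates when $p_1\le1$, or Gaussian vectors normalised for $\ell_{p_1}$ otherwise. With source fine index $\infty$, all $L$ levels can be loaded at once at unit cost. I then read the output level by level. In the target $(\bigoplus\ell_{p_2})_{\ell^q}$ with $q\le1$, the quasi-norm dominates $\bigl(\sum_j\|(Ex)_j\|_{p_2}^q\bigr)^{1/q}$. Using the $q$-triangle inequality in reverse (superadditivity of $\|\cdot\|^q$ over disjoint level supports), this is at least $L^{1/q}$ times a typical diagonal error, provided the errors are comparable across levels.

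The rank enters through the rank--trace inequality. By duality against $\ell_{p_2'}$ and Gaussian averaging, the diagonal blocks $E_{jj}$ satisfy $\sum_j\tr(\text{projected }A_{jj})\le\rank A<N$. Hence on at least half of the levels the effective rank is $\lesssim N/L$, and there the error is $\gtrsim(N/L)^{-1/2}=N^{-1/2}L^{1/2}$. Combining this with the $L^{1/q}$ gain gives $N^{-1/2}L^{1/2+1/q}$. The cross-level entries of $A$ must not help. I would handle them by noting that $(Ex)_j$ for an input supported on level $j'\neq j$ only adds to the target norm: $q\le1$ makes the level contributions add, and a conditional Gaussian argument (condition on the other levels' inputs) shows they cannot cancel the diagonal part in expectation.

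\emph{Transfer.} For $q_1\ge q_2$, the $\ell^{q_1}$-ball over $L$ levels contains $L^{-1/q_1}$ times the $\ell^\infty$-ball. Applying~\eqref{eq:column-window} with $q=q_2$ therefore gives $a_N(D)\ge a_N(D^{\mathcal W_N}_{q_1,q_2})\gtrsim N^{-1/2}L^{1/2+1/q_2-1/q_1}=N^{-1/2}\Lambda_N^{\chi_q+1/2}$. For $q_1<q_2$ we have $\chi_q=0$, and I would instead run the column argument with test inputs supported on a single level at a time, so that the source fine index is irrelevant. Here the $L^{1/q}$ gain is lost, but the per-level rank deficit still gives $N^{-1/2}L^{1/2}$.

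\emph{Main obstacle.} I expect the hardest step to be the column estimate's control of arbitrary inter-level entries in a quasi-normed target. The rank is shared among levels and cross terms could in principle cancel diagonal errors. I would try first to exploit $q\le1$ together with averaging over independent random signs on different levels, so that the cross terms only increase the expected $q$-th power of the error.
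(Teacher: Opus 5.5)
Your upper bound via Proposition~\ref{result:2.7} and your transfer to $q_1\ge q_2$ by the inclusion factor $L^{-1/q_1}$ are fine. The column estimate, however, has a genuine gap where the rank enters. The claim that on half of the levels the error is $\gtrsim (N/L)^{-1/2}$ is false for the diagonal blocks once cross-level entries are allowed.

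To see this, let $R_j=U_jV_j^*$, with $U_j,V_j$ having $N-1$ columns, be a near-optimal approximant of $t_jI_{M_j}$ on each level of the window. Stacking the $U_j$ and the $V_j$ gives $A=UV^*$ with $\rank A<N$ and $A_{jj}=R_j$ for every $j$. All diagonal errors are then $\asymp N^{-1/2}$. Your sign-conditioning argument correctly gives $\|E\|^q\ge 2^{q-1}\sum_j\|E_{jj}\|^q$, but here this yields only $N^{-1/2}L^{1/q}$. The missing factor $L^{1/2}$ sits entirely in the off-diagonal blocks $E_{\ell s}=-U_\ell V_s^*$. You treat these as harmless additions to the norm, but they are the actual source of the bound.

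A per-level effective rank does not get around this. Take $k_j=\tr(P_j\Pi P_j)$, with $\Pi$ the projection onto the range of the weighted approximant $T$. This only gives $|\tr T_{jj}|\le\sqrt{k_j}\,\|T\iota_j\|_{S_2}$, i.e.\ the global inequality $|\tr T|\le\sqrt{\rank T}\,\|T\|_{S_2}$. That inequality still needs Hilbert--Schmidt control of whole block columns, off-diagonal blocks included.

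You can run this per-level trace argument on the block columns $E\iota_j$. It does give $\|E\iota_j\|\gtrsim (N/L)^{-1/2}$ on the levels with $k_j\lesssim N/L$. But then your ``$L^{1/q}$ gain, provided the errors are comparable across levels'' is exactly the unproved step. The large outputs of many source levels may land in the same few target levels. Loading all source levels with random signs then gains only the Khintchine factor $L^{1/2}$ there, not $L^{1/q}$.

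This is what Lemma~\ref{result:3.2} handles, in the following steps:
\begin{itemize}
\item It uses the functional $\mathfrak M_q$, built from signed coordinate inputs on all source levels at once.
\item The matching bound $\sum_{\mathcal Q}g_{\ell s}^q\le 2\mathfrak M_q^q$ and K\"onig's theorem let one delete a small vertex cover, from rows and columns simultaneously. All remaining entries are then at most $\tau\asymp\mathfrak M_qL^{-1/q}$.
\item Random coordinates, the Hilbert-space Khintchine inequality, interpolation of moments of orders $q/2,1,2$, and deletion of the largest $h_\ell$ give $\|B_SE_{SS}C_S\|_{S_2}\lesssim L^{1/2-1/q}\|E\|$ on a common set with $|S|\ge 7L/8$.
\item Lemma~\ref{result:trace-window} turns this into the lower bound. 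The common row/column set is essential, because the rank--trace inequality is applied to a principal compression.
\end{itemize}

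Two smaller points. First, for $1<p_1<2$ (allowed when $p_2=r\le p_1'$), $\ell_{p_1}$-normalised Gaussian inputs lose a factor of order $m^{1/p_1-1/2}$, a power of $N$. Signed coordinate vectors avoid this: they have norm one in every $\ell_p$, and their uniform average reproduces the Hilbert--Schmidt norm. Second, your $q_1<q_2$ case works only if it is routed through the global trace inequality with the single-level column bound $\|BEC\|_{S_2}\lesssim L^{1/2}\|E\|$. The paper instead reduces to $q_1=q_2$ by Lemma~\ref{result:3.1}.
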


For finite matrices, $S_1$ and $S_2$ denote the trace and Hilbert--Schmidt norms.
The following lemma specifies the window of levels used in the row estimate
and shows how the resulting Hilbert--Schmidt bound yields a lower bound for the approximation numbers.
All ranks refer to the entire matrix, before taking a principal compression.

\begin{lemma}\label{result:trace-window}
Fix $\Gamma\ge0$ and a sufficiently large constant $C_*$. There is a fixed stride
$h$ such that, for all sufficiently large $N$, one can choose levels
\begin{equation*}
 j_s=j_1+(s-1)h,\qquad m_s=M_{j_s},\quad t_s=2^{-j_s/r},\quad 1\le s\le L,
\end{equation*}
such that
\begin{equation}\label{eq:37}
 L\asymp\Lambda_N,\quad m_1\asymp C_*N/L^2,\quad
 m_{s+1}\ge3^rm_s,\quad t_s\asymp m_s^{-1/r},\quad
 \sum_s m_s^{-1}\lesssim m_1^{-1},
\end{equation}
and
\begin{equation}\label{eq:39}
 t_L\gtrsim N^{-1/2}L^\Gamma.
\end{equation}
On this window, let $D_L=\bigoplus_st_sI_{m_s}$ have any of the inner and outer
indices used in this paper. Let $B=\bigoplus_sb_sI_{m_s}$ and
$C=\bigoplus_sc_sI_{m_s}$ be positive weights with $b_sc_st_s=m_s^{-1}$.
Suppose every error $E=D_L-A$, $\rank A<N$, admits a set of levels $S$ with
$|S|\ge7L/8$, used for both rows and columns, such that
\begin{equation}\label{eq:trace-input}
 \|B_SE_{SS}C_S\|_{S_2}\le K\|E\|L^{1-\Gamma},
\end{equation}
where $K$ is independent of $N$ and $C_*$. Then, choosing $C_*$ large enough,
$a_N(D_L)\gtrsim N^{-1/2}L^\Gamma$.
\end{lemma}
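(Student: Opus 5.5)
The plan has two independent parts: first the arithmetic choice of the window \eqref{eq:37}--\eqref{eq:39}, then a rank--trace argument that turns the Hilbert--Schmidt input \eqref{eq:trace-input} into a lower bound for $\|E\|$.

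\emph{Choice of the window.} Since $M_j\asymp2^j$, I choose the stride $h$ so large that $2^h$ beats $3^r$ times the ratio constants in $M_j\asymp2^j$. This gives $m_{s+1}\ge3^rm_s$. It also gives $\sum_sm_s^{-1}\lesssim m_1^{-1}$ as a geometric sum, and $t_s=2^{-j_s/r}\asymp m_s^{-1/r}$. Next I take $j_1$ minimal with $M_{j_1}\ge C_*N/L^2$, so $m_1\asymp C_*N/L^2$. Finally I put $L=\lfloor c\Lambda_N\rfloor$ with a small $c>0$ to be fixed. Then
\[
t_L\asymp m_1^{-1/r}2^{-(L-1)h/r}\gtrsim (C_*N)^{-1/r}L^{2/r}N^{-ch/(r\ln 2)}.
\]
Because $r>2$, the exponent $1/2-1/r$ is positive. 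Choosing $c$ so small that $ch/(r\ln2)\le\frac12(\frac12-\frac1r)$ makes the right side at least $N^{-1/2}L^\Gamma$ for large $N$, with the constant $C_*^{-1/r}$ absorbed. This gives \eqref{eq:39}, and it costs nothing in the later steps because $C_*$ only enters polynomially while the gain is a power of $N$.

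\emph{Rank--trace step.} Fix $A$ with $\rank A<N$ and let $S$ be the set of levels given by the hypothesis. Put $Y=B_SE_{SS}C_S$ and $X=B_SA_{SS}C_S$; the compression $X$ has rank $<N$ as well. Then $B_S(D_L)_{SS}C_S=\bigoplus_{s\in S}m_s^{-1}I_{m_s}=X+Y$. Its singular values are $m_s^{-1}$ with multiplicity $m_s$, so each level contributes exactly $1$ to its trace norm. By Ky Fan/Schmidt--Mirsky, the part of $X+Y$ that a rank $<N$ matrix can capture is controlled by the $N$ largest singular values. These lie in the first few levels only: $m_1\asymp C_*N/L^2$ and the $m_s$ grow geometrically, so at most $O(1+\log_{3^r}(L^2/C_*))=O(\log L)$ levels are touched. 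The remaining levels $S'\subset S$, with $|S'|\ge|S|-O(\log L)\ge3L/4$, must therefore be carried essentially by $Y$.

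To make this quantitative I pair with a test matrix $Q=\bigoplus_{s\in S'}q_sP_s$, where $P_s$ is the identity on level $s$ with the captured directions removed. This gives
\[
\sum_{s\in S'}q_s\lesssim|\tr(YQ)|+|\tr(XQ)|.
\]
I then bound $|\tr(YQ)|\le\|Y\|_{S_2}\|Q\|_{S_2}$ and $|\tr(XQ)|\le\sqrt N\,\|X\|_{S_2}\|Q\|_{\mathrm{op}}$. For the latter I use $\|X\|_{S_2}\le\|B_SD_{SS}C_S\|_{S_2}+\|Y\|_{S_2}$ and $\|B_SD_{SS}C_S\|_{S_2}^2=\sum_sm_s^{-1}\lesssim L^2/(C_*N)$. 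The contribution $\sqrt N\,L/\sqrt{C_*}$ is then a small multiple of $L$ once $C_*$ is large. This is where $K$ being independent of $C_*$ is used.

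\emph{Main obstacle.} I expect the hard part to be choosing the weights $q_s$ (and, if needed, restricting $Q$ to a low-dimensional part of each level) so that both $\|Q\|_{S_2}$ and $\sqrt N\|Q\|_{\mathrm{op}}$ are of order $\sqrt N$ while $\sum q_s\gtrsim L$. Naive $q_s\equiv1$ on full levels makes $\|Q\|_{S_2}^2=\sum m_s$ far too large. So I would first try restricting $Q$ on each level to a random subspace of dimension $\asymp N/L$, normalised so that its trace against $m_s^{-1}I_{m_s}$ still contributes of order $1$. Once this balance is achieved, the display yields $L\lesssim\sqrt N\,\|Y\|_{S_2}$. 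Combined with \eqref{eq:trace-input} this gives $L\lesssim\sqrt N K\|E\|L^{1-\Gamma}$, i.e.\ $\|E\|\gtrsim N^{-1/2}L^\Gamma$ uniformly in $A$, which is $a_N(D_L)\gtrsim N^{-1/2}L^\Gamma$.
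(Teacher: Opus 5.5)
Your choice of the window is fine and is essentially the paper's. The only slip is that $2^{-Lh/r}=(e+N)^{-ch\ln 2/r}$, not $N^{-ch/(r\ln2)}$, which is harmless because $c$ is chosen small.

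\textbf{Gap in the rank--trace step.} Your test-matrix argument is designed to produce $L\lesssim\sqrt N\|Y\|_{S_2}$, and that inequality is false, so no choice of $Q$ can close the gap. Take $A=0$. Then $X=0$ and $Y=R_S:=\bigoplus_{s\in S}m_s^{-1}I_{m_s}$, so
$\sqrt N\|Y\|_{S_2}=\sqrt N\bigl(\sum_{s\in S}m_s^{-1}\bigr)^{1/2}\lesssim L/\sqrt{C_*}$.
This is much smaller than $L$ precisely for the large $C_*$ the lemma needs. Equivalently, any $Q$ with $\tr(R_SQ)\gtrsim\sum_s q_s\gtrsim L$ and $\|Q\|_{S_2}\lesssim\sqrt N$ would give $L\lesssim\|R_S\|_{S_2}\|Q\|_{S_2}\lesssim L/\sqrt{C_*}$. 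So the ``main obstacle'' cannot be overcome, not merely with difficulty.

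Your Ky Fan observation, that each level carries trace norm $1$, yields only $\|Y\|_{S_1}\gtrsim L$. Since $Y$ has large rank, $\|Y\|_{S_1}$ is not controlled by $\sqrt N\|Y\|_{S_2}$. Note that \eqref{eq:39}, which you derive, is never used in your second part. That signals the missing ingredient: the smallness of $\|E\|$ must be exploited entrywise, not only through the Hilbert--Schmidt hypothesis.

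\textbf{How the paper closes it.} It splits into two cases.
\begin{itemize}
\item If $\|E\|\ge t_L/2$, then \eqref{eq:39} finishes the proof directly.
\item If $\|E\|<t_L/2$, use that coordinate vectors in the source and coordinate functionals on the target have norm one for all the inner and outer indices, including quasi-norms. Hence $|E_{ii}|\le\|E\|<t_L/2\le t_s/2$ on level $s$. Therefore $\operatorname{Re}A_{ii}\ge t_s/2$ and $\operatorname{Re}\tr(B_SA_{SS}C_S)\ge\sum_{s\in S}m_sb_sc_st_s/2=|S|/2$.
\end{itemize}
In the second case, take $Q=I$ and apply the rank--trace inequality to the low-rank part $T_S=B_SA_{SS}C_S$ rather than to $Y$:
\[
\frac{|S|}{2}\le\sqrt N\|T_S\|_{S_2}\le\sqrt N\bigl(\|R_S\|_{S_2}+\|Y\|_{S_2}\bigr)\lesssim\frac{L}{\sqrt{C_*}}+K\sqrt N\,\|E\|L^{1-\Gamma}.
\]
Absorb the first term by taking $C_*$ large, then use $|S|\ge7L/8$.

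In your notation, the term $\tr Y$ that defeats the $S_2$ pairing is bounded entrywise by $\sum_{s\in S}m_sb_sc_s\|E\|\le|S|\,\|E\|/t_L$. That case split is the idea your proposal is missing.
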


\section{The row estimate}
\label{sec:row}

When $p_1'<p_2$ and the target fine index is below one, we need a Hilbert--Schmidt estimate for a principal compression of the weighted error matrix. We first prove this estimate when the source fine index is infinite, then use norm comparisons on a finite window to obtain the lower bound for $q_1\geq q_2$.

\begin{lemma}\label{result:3.4}
Let $0<q<1$ and $L\ge 1$. Let $H_s$, $1\le s\le L$, be real or complex Hilbert spaces,
let $\Omega_{\ell}$ be nonempty finite sets, and let $v_{\ell s,\nu}\in H_s$,
$\nu\in\Omega_{\ell}$. Define
\[
 T:\left(\bigoplus_{s=1}^L H_s\right)_{\ell^{\infty}}\longrightarrow
 \left(\bigoplus_{\ell=1}^L\ell_{\infty}^{\Omega_{\ell}}\right)_{\ell^q},
 \qquad (Tx)_{\ell,\nu}=\sum_{s=1}^L\langle x_s,v_{\ell s,\nu}\rangle,\quad M=\|T\|.
\]
In the complex case, take the inner product to be linear in the first argument. For every fixed $0<\eta<1$,
there is a set $S\subset\{1,\ldots,L\}$, used for both rows and columns, with $|S|\ge(1-\eta)L$, such that
\begin{equation}
 \sum_{\ell,s\in S}\frac1{|\Omega_{\ell}|}\sum_{\nu\in\Omega_{\ell}}\|v_{\ell s,\nu}\|^2
 \le C_{q,\eta}M^2 L^{1-2/q}.
 \label{eq:64}
\end{equation}
The constant $C_{q,\eta}$ depends only on $q$ and $\eta$.
\end{lemma}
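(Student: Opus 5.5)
The plan is a probabilistic testing argument. We feed $T$ a random input $x=(x_s)$ with $x_s$ a centred Gaussian vector in $H_s$ with covariance $Q_s$, normalised by $\tr Q_s\le1$. We may replace $H_s$ by the finite-dimensional span of the $v_{\ell s,\nu}$.

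\textbf{Lower bound on the output.} For each row $\ell$,
\[
 \|(Tx)_\ell\|_{\ell_\infty^{\Omega_\ell}}\ge\Bigl(\frac1{|\Omega_\ell|}\sum_\nu|(Tx)_{\ell,\nu}|^2\Bigr)^{1/2}.
\]
Each $(Tx)_{\ell,\nu}$ is a centred Gaussian with variance $\sigma_{\ell\nu}^2=\sum_s\langle Q_sv_{\ell s,\nu},v_{\ell s,\nu}\rangle$. Gaussian hypercontractivity (equivalence of moments of a Gaussian quadratic form) then gives
\[
 \mathbb E\|(Tx)_\ell\|^q\ge c_q\,a_\ell(Q)^{q/2},\qquad a_\ell(Q)=\frac1{|\Omega_\ell|}\sum_\nu\sigma_{\ell\nu}^2 .
\]
Hence $\mathbb E\|Tx\|^q\gtrsim\sum_\ell a_\ell(Q)^{q/2}$.

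\textbf{Upper bound on the output.} We have $\|Tx\|\le M\max_s\|x_s\|$, and here the set $S$ enters. By Markov, $\mathbb P(\|x_s\|>K)\le K^{-2}$. We run a conditional Gaussian argument: restrict to the columns $s\in S$, i.e.\ set $Q_s=0$ off $S$. On the good event $\{\|x_s\|\le K\ \forall s\in S\}$ we get $\|Tx\|\le KM$. Off that event we use that $S$ is chosen so that its contribution is controlled. The cleanest way I see is to choose $S$ deterministically as the levels where a fixed-level tail functional is small. Because only a fraction $\eta$ of the $L$ levels can be bad on average, this leaves $|S|\ge(1-\eta)L$. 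Dropping the bad rows is harmless, since the left side of \eqref{eq:64} only sums over $\ell\in S$. The result is
\[
 \sum_{\ell\in S}a_\ell(Q)^{q/2}\le C_{q,\eta}M^q\qquad\text{for all admissible }Q\text{ supported on }S .
\]

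\textbf{Choice of covariance.} Next we choose $Q$ by convex optimisation. The map $Q\mapsto\sum_\ell a_\ell(Q)^{q/2}$ is concave, and the constraint set $\{Q_s\ge0,\ \tr Q_s\le1\}$ is convex. I would take the mixture
\[
 Q_s=\frac1L\sum_{\ell\in S}R_{\ell s},\qquad
 R_{\ell s}=\frac{\frac1{|\Omega_\ell|}\sum_\nu v_{\ell s,\nu}v_{\ell s,\nu}^*}{\beta_{\ell s}},
\]
with $\beta_{\ell s}$ the trace, so that each $R_{\ell s}$ is tailored to row $\ell$. Since $Q_s\ge\frac1L R_{\ell s}$, this gives $a_\ell(Q)\ge\frac1L\sum_s\beta_{\ell s}^{-1}\frac1{|\Omega_\ell|}\sum_\nu\langle R\,v,v\rangle$. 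A Cauchy--Schwarz step bounds this from below by $\frac1L$ times a row quantity comparable to $b_\ell=\sum_{s\in S}\frac1{|\Omega_\ell|}\sum_\nu\|v_{\ell s,\nu}\|^2$, after renormalising. We also use the trivial per-entry bound $\sum_s\|v_{\ell s,\nu}\|\le M$, obtained by testing aligned unit vectors. It gives $b_\ell\le M^2$ and keeps the normalisation consistent.

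Combining, $\sum_{\ell\in S}(b_\ell/L)^{q/2}\lesssim M^q$ together with $b_\ell\le M^2$. Write $\theta_\ell=b_\ell/M^2\in[0,1]$. The constraints become $\sum_\ell\theta_\ell^{q/2}\lesssim L^{q/2}$ and $\theta_\ell\le1$. Maximising $\sum_\ell\theta_\ell$ under these, with $q/2<1$, is attained by equal values. At $\theta_\ell=\theta$ the first constraint reads $L\theta^{q/2}\lesssim L^{q/2}$, so $\theta\lesssim L^{1-2/q}$ and $\sum\theta_\ell\lesssim L^{2-2/q}$. This is off by one power of $L$ from \eqref{eq:64}, so the mixture must be sharpened. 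The point is to choose the weights of the mixture (not uniform $1/L$) by the optimisation itself: take $Q$ to be a maximiser and use the first-order (KKT) conditions. These equate the marginal gain $a_\ell^{q/2-1}\partial a_\ell$ across levels with a Lagrange multiplier per trace constraint, and summing them should give exactly $\sum b_\ell\lesssim M^2L^{1-2/q}$.

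\textbf{Expected obstacles.} I expect two steps to be the main difficulty. The first is this KKT bookkeeping, namely getting the exact exponent $1-2/q$ rather than a lossy one. The second is making the exceptional-set selection of $S$ deterministic while controlling $\max_s\|x_s\|$ uniformly in the dimensions. My first attempt at the latter would be to condition on the event $\{\|x_s\|\le K\}$ column by column and remove the at most $\eta L$ levels where the conditional variance drops by more than a fixed factor.
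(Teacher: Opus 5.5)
There is a genuine gap. The missing power of $L$ is not a KKT bookkeeping issue: it is built into your class of test inputs. Your formula $\sigma_{\ell\nu}^2=\sum_s\langle Q_sv_{\ell s,\nu},v_{\ell s,\nu}\rangle$ presupposes independent centred blocks, so the contributions of different columns to one row add in quadrature.

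Take $H_s=\mathbb R^L$, $\Omega_\ell=\{1\}$ and $v_{\ell s,1}=\gamma e_\ell$ for all $\ell,s$. Then $(Tx)_\ell=\gamma\langle\sum_sx_s,e_\ell\rangle$, so $M=\gamma L^{1/q+1/2}$, attained at $x_s=L^{-1/2}(1,\dots,1)$. The bound \eqref{eq:64} is sharp here, because its left side is $|S|^2\gamma^2$. Yet every input with independent centred blocks and $\mathbb E\|x_s\|^2\le1$ (truncated or not) satisfies
\[
 \mathbb E\|Tx\|^q\le\sum_\ell\bigl(\mathbb E|(Tx)_\ell|^2\bigr)^{q/2}
 \le L^{1-q/2}\Bigl(\gamma^2\sum_s\mathbb E\|x_s\|^2\Bigr)^{q/2}\le\gamma^qL=L^{-q/2}M^q .
\]
So everything you extract, including $\sum_s\|v_{\ell s,\nu}\|\le M$ and $b_\ell\le M^2$, stays true with $M$ replaced by $L^{-1/2}M$. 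Hence no choice of covariances can give better than $M^2L^{2-2/q}$.

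The paper instead couples the blocks. One Gaussian $g_\ell$ per target row is shared by all source blocks, $z_s=\sum_\ell\alpha_\ell g_\ell v_{\ell s,\nu_\ell}$. Row $\ell$ then receives the coherent contribution $\alpha_\ell g_\ell\sum_s\|v_{\ell s,\nu_\ell}\|^2/h_s$, which adds linearly over the columns rather than in quadrature.

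Once the blocks are dependent, truncation is no longer the easy per-block matter it would be in your setting. With independent blocks you could simply zero a block with $\|x_s\|>K$ and compare fourth moments, and no selection of $S$ is needed for that. The paper uses $x_s=z_s/\max\{4h_s,\|z_s\|\}$ and the monotonicity of this radial map. It also needs the bound $\alpha_\ell\|v_{\ell s,\nu}\|\le a\tau\le h_s$, which comes from the K\"onig vertex-cover deletion of large entries. That deletion, together with the two rearrangement deletions of Step~4, is what produces $S$.

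A second, independent loss comes from bounding $\|(Tx)_\ell\|_\infty$ below by the root mean square over $\nu$, with a covariance that does not depend on $\nu$. Take $H_s=\ell_2^m$, $\Omega_\ell=\{1,\dots,m\}$ and $v_{\ell s,\nu}=c\,\delta_{\ell s}e_\nu$. Then $M=cL^{1/q}$ and \eqref{eq:64} is again sharp. But $a_\ell(Q)=c^2\tr Q_\ell/m\le c^2/m$, so your output bound certifies only $M\gtrsim cL^{1/q}m^{-1/2}$, a loss of a factor $m$ in \eqref{eq:64}. Your Cauchy--Schwarz step hides the same loss, since $\tr(A^2)/\tr A$ can be as small as $\tr A/\rank A$.

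The paper avoids this as follows:
\begin{itemize}
\item It samples a single coordinate $\nu_\ell$ per row from a law $W_\ell^0$. This law comes from the saddle-point problem $\min_\alpha\max_W\Phi(\alpha,W)$ under $\sum_\ell\alpha_\ell^{-\sigma}\le1$, with $\sigma=q/(1-q)$.
\item It reads the output at that coordinate, rather than averaging over $\nu$.
\item It brings in the uniform average over $\nu$ only at the end, through $\Phi(\alpha,U)\le\Phi(\alpha,W^0)=F\lesssim M$.
\item Deleting the rows with the smallest $\alpha_\ell$, the choice of $\sigma$ gives $\sum_su_s\lesssim Mn^{1-1/q}$. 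Deleting the largest $u_s$ then yields exactly the exponent $1-2/q$.
\end{itemize}

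Finally, your last step is also off. Maximizing $\sum_\ell\theta_\ell$ under $\sum_\ell\theta_\ell^{q/2}\lesssim L^{q/2}$ and $\theta_\ell\le1$ is achieved by concentrated values, not equal ones. You would need a row deletion even to reach $L^{2-2/q}$.
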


\begin{proof}
If $M=0$, every vector is zero, and we take all labels. Suppose $M>0$, and put $\epsilon=\eta/4$.

We first delete a small set of labels to bound the sizes of the remaining vectors.
We then choose weights by convex minimization and construct random test vectors to bound
the minimum in terms of the operator norm. Finally, two further deletions turn this
weighted bound into~\eqref{eq:64} on a common set of row and column labels.

\medskip\noindent\textbf{Step 1.}\quad
Put $g_{\ell s}=\max_{\nu\in\Omega_{\ell}}\|v_{\ell s,\nu}\|$. Every bipartite matching $\mathcal Q$ satisfies
\begin{equation}
 \sum_{(\ell,s)\in\mathcal Q}g_{\ell s}^q\le 2^{1-q}M^q.
 \label{eq:65}
\end{equation}
Pairs with $g_{\ell s}=0$ contribute nothing and may be omitted. For each remaining matched pair
$(\ell,s)$, choose an index $\nu_{\ell}$ attaining $g_{\ell s}$ and place a unit vector in the direction of $v_{\ell s,\nu_{\ell}}$ in source
block $s$. Multiply these source vectors by independent signs and set all other source blocks to
zero. For each selected target coordinate, condition on all signs except the sign of its matched source block.
The two possible coordinate values are $b+a$ and $b-a$, with $|a|=g_{\ell s}$. For real or complex scalars,
\[
 \frac{|b+a|^q+|b-a|^q}{2}\ge 2^{q-1}|a|^q,
\]
because $|b+a|+|b-a|\ge 2|a|$ and $u^q+v^q\ge(u+v)^q$ for $u,v\ge 0$. Sum over the distinct matched
target labels and average to obtain \eqref{eq:65}.

Set
\[
 \tau=\left(\frac{2^{1-q}}{\epsilon L}\right)^{1/q}M.
\]
The graph of edges with $g_{\ell s}>\tau$ has a maximum matching of size less than $\epsilon L$. The bipartite
matching--cover theorem gives a vertex cover of size less than $\epsilon L$. Delete each label appearing in this cover from both the source and target index sets. The remaining set $S_0$ has size $n\ge(1-\epsilon)L$, and every vector indexed by
$S_0\times S_0$ has norm at most $\tau$. The source inclusion and target projection are contractive, so the restricted operator has norm at most $M$.
Relabel the restricted array by $1,\ldots,n$.
The choice of the threshold gives
\begin{equation*}
 n\tau^q\le C_{q,\eta}M^q.
\end{equation*}

\medskip\noindent\textbf{Step 2.}\quad
We choose weights by minimizing a convex function. Put $\sigma=q/(1-q)$ and $Q_{\ell s,\nu}=\|v_{\ell s,\nu}\|^2$.
Let $W=(W_{\ell})_{\ell}$ range over the product of the probability simplexes on $\Omega_{\ell}$, and write
\[
 b_{\ell s}(W)=\sum_{\nu\in\Omega_{\ell}}W_{\ell,\nu}Q_{\ell s,\nu}.
\]
For $\alpha_{\ell}>0$, set $a=\max_{\ell}\alpha_{\ell}$ and define
\[
 \Phi(\alpha,W)=\sum_{s=1}^n\left(\sum_{\ell=1}^n\alpha_{\ell}^2 b_{\ell s}(W)+\tau^2 a^2\right)^{1/2},
 \qquad\Psi(\alpha)=\max_W\Phi(\alpha,W).
\]
The term $\tau^2a^2$ bounds the weights on sublevel sets and provides a lower bound
for the truncation scale used in Step~3.
For fixed $W$, each summand is the maximum over $j$ of Euclidean norms whose squared values are
$\sum_{\ell}b_{\ell s}(W)\alpha_{\ell}^2+\tau^2\alpha_j^2$. Hence $\Phi$ and $\Psi$ are convex in $\alpha$. For fixed $\alpha$, $\Phi$ is concave and differentiable
in $W$.

Minimize $\Psi$ subject to the convex constraint
\[
 G(\alpha):=\sum_{\ell=1}^n\alpha_{\ell}^{-\sigma}\le 1.
\]
The exponent is chosen so that $q(\sigma+1)=\sigma$. Once the constraint is active,
the optimality relation will therefore yield the $q$th-power identity~\eqref{eq:67}.
The feasible sublevel sets are compact. Indeed, feasibility implies $\alpha_{\ell}\ge 1$, and $\Psi(\alpha)\ge n\tau\max_{\ell}\alpha_{\ell}$ bounds every
coordinate from above on each sublevel set. A minimizer therefore exists. The constraint is active at a minimizer: otherwise, scaling down all the $\alpha_{\ell}$ would decrease the objective.
Taking all coordinates sufficiently large gives a strictly feasible point, so Slater's condition holds. The convex
optimality conditions give a subgradient $\beta\in\partial\Psi(\alpha)$ and a multiplier $\lambda\ge 0$ such that
\[
 \beta_{\ell}=\lambda\sigma\alpha_{\ell}^{-\sigma-1}.
\]
Homogeneity of degree one gives $\sum_{\ell}\alpha_{\ell}\beta_{\ell}=\Psi(\alpha)$. Since $G(\alpha)=1$, we obtain
\begin{equation}
 \beta_{\ell}=F\alpha_{\ell}^{-\sigma-1},\qquad F:=\Psi(\alpha),\qquad\sum_{\ell}\beta_{\ell}^q=F^q.
 \label{eq:67}
\end{equation}

We now represent this subgradient in terms of one maximizing $W$. Fix the minimizing $\alpha$ and write, for each distribution $W$,
\[
 \mu_s(W)=\sum_{\ell}\alpha_{\ell}^2 b_{\ell s}(W).
\]
All maximizing distributions give the same vector $(\mu_s(W))_s$. This vector depends affinely on $W$, and $\tau^2a^2$ is fixed. Thus, if two maximizers gave different vectors, strict concavity of the square root in at least one summand would give a larger value at their midpoint, contradicting the definition of $\Psi(\alpha)$. Denote this common vector by $(\mu_s)_s$ and put $h_s=(\mu_s+\tau^2a^2)^{1/2}$.

The subgradient rule for a compact maximum of convex functions (Danskin's rule) expresses $\beta$ as a finite convex combination
\[
 \beta=\sum_i\theta_i\gamma^{(i)},\qquad
 \theta_i\ge0,\quad\sum_i\theta_i=1,\quad
 \gamma^{(i)}\in\partial_{\alpha}\Phi(\alpha,W^{(i)}),
\]
where every $W^{(i)}$ maximizes $\Phi(\alpha,\cdot)$. Since all these distributions give the same $h_s$, their subgradients have the form
\[
 \gamma_{\ell}^{(i)}=
 \alpha_{\ell}\sum_s\frac{b_{\ell s}(W^{(i)})}{h_s}
 +\xi_{\ell}^{(i)}\tau^2a\sum_s h_s^{-1},
\]
where each $\xi^{(i)}$ is a probability vector supported on the labels with $\alpha_{\ell}=a$. Set
\[
 W^0=\sum_i\theta_iW^{(i)},\qquad \xi=\sum_i\theta_i\xi^{(i)}.
\]
The distribution $W^0$ gives the same vector $(\mu_s)_s$ and therefore also maximizes $\Phi(\alpha,\cdot)$. The first terms in the subgradient formula are linear in $W^{(i)}$, while the regularization factors are the same for all these subgradients. Consequently,
\begin{equation}
 \beta=\beta^{\mathrm{real}}+\beta^{\mathrm{reg}},\qquad
 \beta_{\ell}^{\mathrm{real}}=\alpha_{\ell}\sum_s\frac{b_{\ell s}(W^0)}{h_s},\qquad
 \beta_{\ell}^{\mathrm{reg}}=\xi_{\ell}\tau^2a\sum_s h_s^{-1},
 \label{eq:68}
\end{equation}
where $\xi_{\ell}\ge 0$, $\sum_{\ell}\xi_{\ell}=1$, and $\xi$ is supported on the labels with $\alpha_{\ell}=a$.

At $W^0$, maximality implies that, for each target label $\ell$, the number
\begin{equation}
 \alpha_{\ell}\sum_s\frac{Q_{\ell s,\nu}}{h_s}
 \label{eq:69}
\end{equation}
equals $\beta_{\ell}^{\mathrm{real}}$ at every index in the support of $W_{\ell}^0$. Indeed, the derivative of $\Phi$ with respect to the
probability at $\nu$ in that row is $\alpha_{\ell}^2\sum_s Q_{\ell s,\nu}/(2h_s)$. At a maximum on the
probability simplex, these derivatives are equal on the support and no larger at the other coordinates.

\medskip\noindent\textbf{Step 3.}\quad
We first bound $\sum_{\ell}(\beta_{\ell}^{\mathrm{real}})^q$ by $C_qM^q$,
using test vectors whose source blocks have norm at most one.
Choose the indices $\nu_{\ell}$ independently with laws $W_{\ell}^0$. Independently of these choices, take independent standard real Gaussian variables $g_{\ell}$, and set
\[
 z_s=\sum_{\ell}\alpha_{\ell}g_{\ell}v_{\ell s,\nu_{\ell}},\qquad
 x_s=\frac{z_s}{\max\{4h_s,\|z_s\|\}}.
\]
Each source block has norm at most one, so $\|Tx\|\le M$ for every outcome. In the complex case,
regard each $H_s$ as a real Hilbert space with scalar product $\operatorname{Re}\langle\cdot,\cdot\rangle$ and the same norm. Fix a target label $\ell$ and an index $\nu_{\ell}$ in the support of $W_{\ell}^0$. Condition on this index and on all index choices and Gaussian variables at the other target labels. Define $x_s(t)$ by replacing $g_{\ell}$ with $t$, and set
\[
 f_{\ell}(t)=\operatorname{Re}\sum_s\langle x_s(t),v_{\ell s,\nu_{\ell}}\rangle.
\]
The radial map $z\mapsto$
$z/\max\{4h_s,\|z\|\}$ is Lipschitz and is the gradient of a convex radial function on the real Hilbert space.
It is therefore monotone, and $f_{\ell}$ is Lipschitz and nondecreasing in $t$.

Write $w_s=\sum_{k\ne\ell}\alpha_k g_k v_{ks,\nu_k}$. Let $\mathbb E_{-\ell}$ and $\mathbb P_{-\ell}$ denote expectation and probability over the index choices and Gaussian variables at the other labels, conditional on the fixed $\nu_{\ell}$. Independence gives
\[
 \mathbb E_{-\ell}\|w_s\|^2=\sum_{k\ne\ell}\alpha_k^2 b_{ks}(W^0)\le\mu_s\le h_s^2.
\]
The event $G_s=\{\|w_s\|\le 2h_s\}$ therefore satisfies $\mathbb P_{-\ell}(G_s)\ge3/4$ and is independent of $g_{\ell}$.
By Step~1 and the regularization term in $h_s$, we have $\alpha_{\ell}\|v_{\ell s,\nu_{\ell}}\|\le a\tau\le h_s$. On $G_s$ and for all $-1\le t\le 1$,
$\|w_s+\alpha_{\ell}t v_{\ell s,\nu_{\ell}}\|\le 3h_s<4h_s$, so the contribution of this column to the derivative of $f_{\ell}$ is $\alpha_{\ell}Q_{\ell s,\nu_{\ell}}/(4h_s)$. By monotonicity, every other column contributes a nonnegative derivative at almost every $t$.

Put
\[
 B_{\ell}=\sum_s\frac{\alpha_{\ell}Q_{\ell s,\nu_{\ell}}}{h_s}\mathbf 1_{G_s}.
\]
By \eqref{eq:69}, $0\le B_{\ell}\le\beta_{\ell}^{\mathrm{real}}$ and $\mathbb E_{-\ell}B_{\ell}\ge 3\beta_{\ell}^{\mathrm{real}}/4$. When $\beta_{\ell}^{\mathrm{real}}>0$, these two bounds imply
\[
 \mathbb P_{-\ell}\{B_{\ell}\ge\beta_{\ell}^{\mathrm{real}}/2\}\ge\frac12.
\]
On this event, integration over $-1\le t\le1$ gives
$f_{\ell}(1)-f_{\ell}(-1)\ge B_{\ell}/2\ge\beta_{\ell}^{\mathrm{real}}/4$. Thus either $f_{\ell}(1)\ge\beta_{\ell}^{\mathrm{real}}/8$ or $f_{\ell}(-1)\le-\beta_{\ell}^{\mathrm{real}}/8$. By monotonicity, the same absolute lower bound holds on the corresponding tail $g_{\ell}\ge1$ or $g_{\ell}\le-1$. The tail is chosen after the other variables are fixed. Since $g_{\ell}$ is independent of them, either tail has the same fixed positive conditional probability. Integrating first over $g_{\ell}$ and then over the other labels and $\nu_{\ell}$ gives
\[
 \mathbb E|f_{\ell}(g_{\ell})|^q\ge c_q(\beta_{\ell}^{\mathrm{real}})^q.
\]
The same inequality is immediate when $\beta_{\ell}^{\mathrm{real}}=0$. Since $|f_{\ell}(g_{\ell})|\le\max_{\nu\in\Omega_{\ell}}|(Tx)_{\ell,\nu}|$ for every outcome, summing over $\ell$ and using $\|Tx\|\le M$ yields
\begin{equation}
 \sum_{\ell}(\beta_{\ell}^{\mathrm{real}})^q\le C_q M^q.
 \label{eq:70}
\end{equation}

\begin{samepage}
For the regularization term, we have
\[
 \sum_{\ell}\beta_{\ell}^{\mathrm{reg}}=\tau^2 a\sum_s h_s^{-1}\le n\tau,
 \qquad\sum_{\ell}(\beta_{\ell}^{\mathrm{reg}})^q\le n^{1-q}(n\tau)^q=n\tau^q\le C_{q,\eta}M^q.
\]
\end{samepage}
Combining this with \eqref{eq:67}, \eqref{eq:68}, and \eqref{eq:70}, we obtain $F\le C_{q,\eta}M$.

\medskip\noindent\textbf{Step 4.}\quad
Let $U_{\ell}$ be the uniform distribution on $\Omega_{\ell}$. Maximality of $W^0$ gives
\[
 \sum_s\left(\sum_{\ell}\alpha_{\ell}^2b_{\ell s}(U)\right)^{1/2}
 \le\Phi(\alpha,U)\le F\le C_{q,\eta}M.
\]
Set $\rho_{\ell}=\alpha_{\ell}^{-1}$, so $\sum_{\ell}\rho_{\ell}^{\sigma}=1$. Delete the $\lfloor\epsilon n\rfloor$ largest $\rho_{\ell}$ and call the remaining target labels $R$.
Then $\max_{\ell\in R}\rho_{\ell}\le(\epsilon n)^{-1/\sigma}$. For every source label put
\[
 u_s=\left(\sum_{\ell\in R}b_{\ell s}(U)\right)^{1/2}.
\]
The preceding estimate gives
\[
 \sum_s u_s\le C_{q,\eta}M n^{1-1/q}.
\]
Delete the $\lfloor\epsilon n\rfloor$ largest $u_s$ and intersect the remaining source labels with $R$. The resulting
set $S$ satisfies $|S|\ge(1-2\epsilon)n$ and
\[
 \sum_{\ell,s\in S}b_{\ell s}(U)\le\sum_{s\in S}u_s^2
 \le\frac1{\epsilon n}\left(\sum_s u_s\right)^2\le C_{q,\eta}M^2 n^{1-2/q}.
\]
The two rearrangement estimates use $\lfloor\epsilon n\rfloor+1>\epsilon n$ and thus also hold for small $n$.
Finally, $(1-\epsilon)(1-2\epsilon)\ge 1-\eta$ and $n\asymp_{\eta}L$, which gives \eqref{eq:64}.
\end{proof}

\begin{proposition}\label{result:3.5}
Let $L\ge 1$ be finite, let $2<r<\infty$, $p=r'$, and $0<q<1$, and suppose $0<c_0\le d_s=t_s m_s^{1/r}\le C_0$. For $1\le s,\ell\le L$, put
\[
 \begin{aligned}
 X_{\infty}&=\left(\bigoplus_s\ell_p^{m_s}\right)_{\ell^{\infty}},\quad
 &Y_q&=\left(\bigoplus_{\ell}\ell_{\infty}^{m_{\ell}}\right)_{\ell^q},\\
 B&=\bigoplus_{\ell}d_{\ell}^{-1}m_{\ell}^{-1/2}I,\quad
 &C&=\bigoplus_s m_s^{1/r-1/2}I.
 \end{aligned}
\]
For every $E:X_{\infty}\to Y_q$, there is a set $S\subset\{1,\ldots,L\}$, used for both rows and columns, with $|S|\ge 7L/8$, such that
\begin{equation*}
 \|B_S E_{SS}C_S\|_{S_2}\le C_{q,c_0}L^{1/2-1/q}\|E\|.
\end{equation*}
\end{proposition}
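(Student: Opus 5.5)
The plan is to reduce Proposition~\ref{result:3.5} directly to Lemma~\ref{result:3.4}, taking $\eta=1/8$. The reduction has three parts: replace the $\ell_p$ source blocks by Hilbert spaces through the weight $C$, read off the vectors $v_{\ell s,\nu}$ from the rows of the blocks of $EC$, and absorb the remaining weight $B$ using the lower bound $d_\ell\ge c_0$.

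\emph{Step 1: Hilbert source blocks.} Since $p=r'<2$ and $1/2-1/p=1/r-1/2$, Hölder's inequality gives, for $y\in\ell_2^{m_s}$,
\[
\|m_s^{1/r-1/2}y\|_{\ell_p^{m_s}}\le\|y\|_{\ell_2^{m_s}}.
\]
Hence $C=\bigoplus_s m_s^{1/r-1/2}I$ maps the unit ball of $\bigl(\bigoplus_s\ell_2^{m_s}\bigr)_{\ell^\infty}$ into the unit ball of $X_\infty$. The operator
\[
T=EC:\Bigl(\bigoplus_{s=1}^L\ell_2^{m_s}\Bigr)_{\ell^\infty}\longrightarrow Y_q
\]
therefore satisfies $M=\|T\|\le\|E\|$. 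We take $H_s=\ell_2^{m_s}$ (real or complex, matching the scalars of $E$) and $\Omega_\ell=\{1,\dots,m_\ell\}$.

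\emph{Step 2: the vectors $v_{\ell s,\nu}$.} Write $T_{\ell s}=E_{\ell s}m_s^{1/r-1/2}$ for the $(\ell,s)$ block of $T$, an $m_\ell\times m_s$ matrix. Define $v_{\ell s,\nu}\in H_s$ as the entrywise complex conjugate of the $\nu$-th row of $T_{\ell s}$. With the inner product linear in the first argument, this gives $(Tx)_{\ell,\nu}=\sum_s\langle x_s,v_{\ell s,\nu}\rangle$, which is exactly the form required in Lemma~\ref{result:3.4}. Moreover,
\[
\sum_{\nu}\|v_{\ell s,\nu}\|^2=\|T_{\ell s}\|_{S_2}^2.
\]
Lemma~\ref{result:3.4} then yields a set $S$ with $|S|\ge 7L/8$, used for rows and columns, such that
\[
\sum_{\ell,s\in S}m_\ell^{-1}\|E_{\ell s}C_s\|_{S_2}^2\le C_q\|E\|^2L^{1-2/q}.
\]

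\emph{Step 3: inserting $B$.} On block $\ell$, $B$ acts as $d_\ell^{-1}m_\ell^{-1/2}I$, and $d_\ell\ge c_0$. Therefore
\[
\|B_SE_{SS}C_S\|_{S_2}^2=\sum_{\ell,s\in S}d_\ell^{-2}m_\ell^{-1}\|E_{\ell s}C_s\|_{S_2}^2\le c_0^{-2}C_q\|E\|^2L^{1-2/q}.
\]
Taking square roots gives the claim with $C_{q,c_0}=c_0^{-1}C_q^{1/2}$. The upper bound $C_0$ on $d_s$ is not needed here.

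\emph{Main obstacle.} The substantive work is entirely contained in Lemma~\ref{result:3.4}, so the only point needing care is the identification in Step~2. One must check that the coordinates of $T$ are inner products against vectors whose squared norms, summed over $\nu$, reproduce the Hilbert--Schmidt norm of the block; this includes the conjugation convention in the complex case. One must also check that the $\ell^\infty$ source sum and the finite-dimensional target are exactly the spaces assumed in the lemma, which holds since $L$ is finite.
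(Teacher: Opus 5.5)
Your proposal is correct and is essentially the paper's proof. Both arguments take $H_s=\ell_2^{m_s}$ and $\Omega_\ell=\{1,\ldots,m_\ell\}$, use that $C_s:\ell_2^{m_s}\to\ell_p^{m_s}$ is contractive, take the vectors to be the conjugated rows of $E_{\ell s}C_s$, and apply Lemma~\ref{result:3.4} with $\eta=1/8$. The only difference is cosmetic: the paper puts the factor $d_\ell^{-1}$ into the vectors, so the lemma's operator has norm at most $c_0^{-1}\|E\|$, whereas you apply the lemma to $EC$ and insert $d_\ell^{-2}\le c_0^{-2}$ afterwards, which gives the same bound.
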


\begin{proof}
Each $C_s:\ell_2^{m_s}\to\ell_p^{m_s}$ is contractive, because $m_s^{1/r-1/2}m_s^{1/p-1/2}=1$. Take $H_s=\ell_2^{m_s}$,
$\Omega_{\ell}=\{1,\ldots,m_{\ell}\}$, and $v_{\ell s,\nu}=d_{\ell}^{-1}C_s E_{\ell s}^*e_{\nu}$. The operator defined in Lemma~\ref{result:3.4} then has norm at
most $c_0^{-1}\|E\|$, and the averaged sum of squared vector norms on $S$ is $\|B_S E_{SS}C_S\|_{S_2}^2$. The lemma
with $\eta=1/8$ gives the result.
\end{proof}

\begin{theorem}\label{result:3.6}
Suppose $p_1'=r<p_2\le\infty$, $2<r<\infty$, $q_1=\infty$, and $0<q=q_2<1$. The map~\eqref{eq:7} satisfies
\[
 a_N(D_{\infty,q})\asymp N^{-1/2}\bigl(\log(e+N)\bigr)^{1/q+1/2}.
\]
\end{theorem}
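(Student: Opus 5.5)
Here $q_1=\infty$ and $q_2=q<1$, so $\chi_q=1/q$, and the upper bound $a_N(D_{\infty,q})\lesssim N^{-1/2}\Lambda_N^{1/q+1/2}$ is exactly the right-hand side of Proposition~\ref{result:2.7}. It therefore remains to prove the lower bound. I will do this on a finite window using Lemma~\ref{result:trace-window} with $\Gamma=1/q+1/2$, so that $L^{1-\Gamma}=L^{1/2-1/q}$. The ideal property, with the contractive coordinate injection and projection for the chosen levels, gives $a_N(D)\ge a_N(D_L)$. So it suffices to show $a_N(D_L)\gtrsim N^{-1/2}L^{1/q+1/2}$ for the compression $D_L=\bigoplus_s t_sI_{m_s}$ with source $X_\infty=(\bigoplus_s\ell_{p_1}^{m_s})_{\ell^\infty}$ and target $(\bigoplus_s\ell_{p_2}^{m_s})_{\ell^q}$.

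First I pass to the model indices of Proposition~\ref{result:3.5}. If $p_1>1$, then $p_1=r'$ and the source is already right. If $p_1\le1$, then $r=p_1'=\infty$, which is excluded, so in fact $1<p_1=r'$. For the target, $p_2>r$, and I compose with the inclusion $\ell_{p_2}^{m}\hookrightarrow\ell_\infty^{m}$, which has norm one. Any error $E=D_L-A$ with $\rank A<N$ then satisfies $\|E:X_\infty\to Y_q\|\le\|E\|$, where $Y_q=(\bigoplus\ell_\infty^{m_\ell})_{\ell^q}$. Proposition~\ref{result:3.5} gives a set $S$ with $|S|\ge7L/8$ and
\[
\|B_SE_{SS}C_S\|_{S_2}\le C_{q,c_0}L^{1/2-1/q}\|E\|,
\]
with $B=\bigoplus d_\ell^{-1}m_\ell^{-1/2}I$ and $C=\bigoplus m_s^{1/r-1/2}I$. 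The window conditions \eqref{eq:37} give $d_s=t_sm_s^{1/r}\asymp1$, uniformly in $N$ and $C_*$. I also need the weight relation $b_sc_st_s=d_s^{-1}m_s^{-1/2}m_s^{1/r-1/2}t_s=m_s^{-1}$, which holds. Hence \eqref{eq:trace-input} holds with $K$ independent of $N$ and $C_*$, and Lemma~\ref{result:trace-window} yields the lower bound $a_N(D_L)\gtrsim N^{-1/2}L^{\Gamma}$ with $L\asymp\Lambda_N$.

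All the substantive work sits in Lemma~\ref{result:3.4} and Lemma~\ref{result:trace-window}, so the only delicate point here is bookkeeping. I must check that $c_0$, and hence $C_{q,c_0}$, does not depend on $C_*$ or $N$; this holds because $t_s\asymp m_s^{-1/r}$ with fixed constants, given $M_j\asymp2^j$. I must also check that the principal compression to $S$ for both rows and columns matches the form required by the lemma. Finally, I will treat small $N$ separately, using Lemma~\ref{result:2.2} and finiteness, so that the estimate holds for all $N\ge1$.
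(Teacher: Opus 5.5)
Your proposal is correct and follows the paper's proof essentially step for step. The upper bound comes from Proposition~\ref{result:2.7} with $\chi_q=1/q$, and the lower bound from Lemma~\ref{result:trace-window} with $\Gamma=1/q+1/2$ and the weights of Proposition~\ref{result:3.5}, using $p_1=r'$ and the contractive inclusion $\ell_{p_2}\hookrightarrow\ell_\infty$. The only cosmetic difference is that you feed this inclusion directly into the hypothesis \eqref{eq:trace-input}, whereas the paper first proves the bound with target inner index $\infty$ and then transfers it by the ideal property; the two are equivalent.
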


\begin{proof}
Proposition~\ref{result:2.7} gives the upper bound. To prove the lower bound, first
take the inner target index to be $\infty$. Choose the window in Lemma~\ref{result:trace-window}
with $\Gamma=1/q+1/2$, and use the weights $B,C$ of
Proposition~\ref{result:3.5}. The diagonal weights satisfy
$b_sc_st_s=m_s^{-1}$. Applied to an error matrix, Proposition~\ref{result:3.5} gives
\[
 \|B_SE_{SS}C_S\|_{S_2}
 \lesssim\|E\|L^{1/2-1/q}=\|E\|L^{1-\Gamma}.
\]
Lemma~\ref{result:trace-window} therefore gives the lower bound on this window.
The contractive inclusion from $\ell_{p_2}$ to $\ell_\infty$ in each target block gives the
same lower bound for finite $p_2>r$. Applying finite compression gives the bound on the full space,
and Lemma~\ref{result:2.2} covers the finitely many small indices.
\end{proof}

\subsection{\texorpdfstring{The range $q_1\ge q_2$}{The range q1 >= q2}}

\begin{proposition}\label{result:3.7}
Suppose $0<p_1<2<p_2\le\infty$, $2<r=\min\{p_1',p_2\}<\infty$, and $0<q_2\le q_1\le\infty$, with $p_1'=\infty$ when $p_1\le 1$. For the map~\eqref{eq:7},
\begin{equation*}
 a_N(D_{q_1,q_2})\asymp N^{-1/2}\bigl(\log(e+N)\bigr)^{1/2+1/q_2-1/q_1}.
\end{equation*}
Here $1/\infty=0$. The constants depend only on the fixed parameters and the
constants in $M_j\asymp2^j$.
\end{proposition}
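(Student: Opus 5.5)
The upper bound is immediate. When $q_1\ge q_2$ we have $\chi_q=1/q_2-1/q_1$, so Proposition~\ref{result:2.7} gives $a_N(D_{q_1,q_2})\lesssim N^{-1/2}\Lambda_N^{\chi_q+1/2}$, which is exactly the claimed order. The whole task is therefore the lower bound.

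For the lower bound I reduce every pair $q_2\le q_1$ to the extremal case with source fine index $\infty$ and target fine index $q<1$. That case is handled by a finite window of $L\asymp\Lambda_N$ levels:
\begin{itemize}
\item If $p_1'=r<p_2$, the proof of Theorem~\ref{result:3.6} gives a window $\mathcal W_N$ with $a_N(D^{\mathcal W_N}_{\infty,q})\gtrsim N^{-1/2}L^{1/2+1/q}$.
\item If $p_2=r\le p_1'$, the window estimate \eqref{eq:column-window} of Proposition~\ref{result:3.3} gives the same bound.
\end{itemize}
The case $p_1'=r<p_2$ with $p_1\ne r'$ needs care. Theorem~\ref{result:3.6} is stated for $p_1=r'$ exactly, so this can only happen when $p_1>1$ with $p_1'=r$, which forces $p_1=r'$. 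So that case is fine. The case $p_1\le1$ has $p_1'=\infty$, so $r=p_2$ and it falls under the column estimate.

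Now fix $q_2\le q_1$ and pick any $0<q<\min\{1,q_2\}$. On a window of $L$ levels I compare norms using only the finite-dimensional identities on the outer index:
\begin{itemize}
\item The identity $(\bigoplus\ell_{p_1}^{m_s})_{\ell^\infty}\to(\bigoplus\ell_{p_1}^{m_s})_{\ell^{q_1}}$ has norm at most $L^{1/q_1}$.
\item The identity $(\bigoplus\ell_{p_2}^{m_s})_{\ell^{q_2}}\to(\bigoplus\ell_{p_2}^{m_s})_{\ell^{q}}$ has norm at most $L^{1/q-1/q_2}$.
\end{itemize}
Factoring $D^{\mathcal W}_{\infty,q}$ through $D^{\mathcal W}_{q_1,q_2}$ and using the ideal property gives
\[
N^{-1/2}L^{1/2+1/q}\lesssim a_N(D^{\mathcal W}_{\infty,q})\le L^{1/q_1}\,L^{1/q-1/q_2}\,a_N(D^{\mathcal W}_{q_1,q_2}).
\]
Rearranging, $a_N(D^{\mathcal W}_{q_1,q_2})\gtrsim N^{-1/2}L^{1/2+1/q_2-1/q_1}$. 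Since $L\asymp\Lambda_N$, this is the required order.

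The window compression $D^{\mathcal W}_{q_1,q_2}$ is obtained from $D_{q_1,q_2}$ by the contractive coordinate injection and projection, so $a_N(D_{q_1,q_2})\ge a_N(D^{\mathcal W}_{q_1,q_2})$. This holds for all large $N$, and Lemma~\ref{result:2.2} together with the compactness of $D$ covers the finitely many small $N$.

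The step that needs most care is making sure the window lower bounds hold with the correct inner indices. In the row case the window bound was proved with target inner index $\infty$ and transferred to $p_2$ by the contractive inclusion $\ell_{p_2}\hookrightarrow\ell_\infty$. In the column case Proposition~\ref{result:3.3} supplies it directly. I also need the window of Lemma~\ref{result:trace-window} to be independent of $q_1,q_2$ apart from the choice of $q$, which holds because $q$ is fixed in terms of $q_2$. Given those window estimates, the remaining comparison above is routine.
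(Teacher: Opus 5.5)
Your proof is correct and follows the paper's argument: the upper bound comes from Proposition~\ref{result:2.7}, and the lower bound comes from the window estimate for $D_{\infty,q}$ (from the proof of Theorem~\ref{result:3.6} or from \eqref{eq:column-window}), transferred by the outer-index inclusions of norms $L^{1/q_1}$ and $L^{1/q-1/q_2}$ and the ideal property. The only difference is cosmetic: the paper fixes the auxiliary index as $q_*=\min\{1/2,q_2\}$, whereas you allow any fixed $q<\min\{1,q_2\}$.
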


\begin{proof}
The upper bound follows from~\eqref{eq:26}. Put $q_*=\min\{1/2,q_2\}$, with
$q_*=1/2$ if $q_2=\infty$. If $p_1'=r<p_2$, the proof of
Theorem~\ref{result:3.6} gives a finite window $\mathcal W_N$ of
$L\asymp\Lambda_N$ blocks with
\[
 a_N(D_{\infty,q_*}^{\mathcal W_N})\gtrsim
 N^{-1/2}L^{1/2+1/q_*}.
\]
If $p_2=r\le p_1'$, Proposition~\ref{result:3.3} supplies the same estimate,
including the case $p_2=p_1'$. Both estimates on the window use the inner indices
$p_1,p_2$.

On this window, let $J$ be the source inclusion from outer index $\infty$ to outer index $q_1$,
and let $K$ be the target inclusion from outer index $q_2$ to outer index $q_*$. Then
\[
 \|J\|=L^{1/q_1},\qquad \|K\|=L^{1/q_*-1/q_2},\qquad
 D_{\infty,q_*}^{\mathcal W_N}=KD_{q_1,q_2}^{\mathcal W_N}J.
\]
These norm comparisons also hold for indices below one and for infinite indices. By finite compression
and the ideal property,
\[
 a_N(D_{q_1,q_2})\ge
 L^{-1/q_1-(1/q_*-1/q_2)}a_N(D_{\infty,q_*}^{\mathcal W_N})
 \gtrsim N^{-1/2}L^{1/2+1/q_2-1/q_1}.
\]
For small $N$, use positivity of a single block compression.
\end{proof}

\section{Upper bounds by grouping levels}
\label{sec:forward}

Proposition~\ref{result:2.7} gives the separate-level
$N^{-1/2}\Lambda_N^{1/2}$ upper bound. To obtain the smaller exponents below, we approximate several levels
together using Gaussian matrices. The matching lower bounds follow
from Malykhin's finite-window argument~\cite{ref13}, as shown in
Appendix~\ref{app:malykhin}.
Throughout this section, $D_{q_1,q_2}$ is the critical map~\eqref{eq:7},
with $M_j\asymp2^j$, $t_j=2^{-j/r}$, and $q_1\le q_2$.

\begin{theorem}\label{result:4.1}
Suppose $0<p_1<2<p_2\le\infty$,
$2<r=\min\{p_1',p_2\}<\infty$, $p_1'\ne p_2$, and
$0<q_1\le q_2\le\infty$. Then
\begin{equation*}
 a_N(D_{q_1,q_2})\asymp N^{-1/2}\Lambda_N^\Gamma,\qquad N\ge2,
\end{equation*}
where
\begin{equation*}
\Gamma=\begin{cases}
\displaystyle\Gamma_s=\min\left\{\frac12,
 \left(1-\frac1{q_1}\right)_++\left(\frac1{q_2}-\frac12\right)_+\right\},
 &p_1'=r<p_2,\\[5pt]
\displaystyle\Gamma_t=\min\left\{\frac12,
 \frac1{q_2}+\left(\frac12-\frac1{q_1}\right)_+\right\},
 &p_2=r<p_1'.
\end{cases}
\end{equation*}
\end{theorem}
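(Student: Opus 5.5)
The plan is to prove upper and lower bounds separately and to use Proposition~\ref{result:2.7} wherever $\Gamma=1/2$. \emph{Upper bounds.} When $\Gamma_s=1/2$ or $\Gamma_t=1/2$, the bound \eqref{eq:26} gives $N^{-1/2}\Lambda_N^{\chi_q+1/2}$ with $\chi_q=0$, since $q_1\le q_2$. That is the claim. So only $\Gamma<1/2$ needs work.

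\textbf{Case $p_1'=r<p_2$ with $\Gamma_s<1/2$.} Here $q_1<2$ and $q_2>1$. I would split the levels into three ranges.
\begin{itemize}
\item Levels $j\lesssim \log N - C\log\Lambda_N$ are reproduced exactly, at rank $\lesssim N$.
\item The tail $j\ge \log N+C\log\Lambda_N$ is discarded; its error is $\lesssim t_j\lesssim N^{-1/2}$, summed geometrically.
\item The middle window of $L\asymp\log\Lambda_N$ or $\asymp\Lambda_N$ levels is the critical part.
\end{itemize}
On the window I would approximate the whole group at once by a single Gaussian finite-rank operator $A=\sum_j t_j G_j^*G_j$. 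Here $G_j$ are $n\times M_j$ Gaussian matrices with a common $n\asymp N$, scaled so that $\mathbb E G_j^*G_j=I$, and $A$ acts on the concatenated coordinates so that its total rank is $n$. The error $D-A$ is then controlled in the mixed norm $(\ell_{p_1})_{\ell^{q_1}}\to(\ell_{p_2})_{\ell^{q_2}}$. The strict gap $p_1'<p_2$ makes the $\ell_{p_1}\to\ell_{p_2}$ norms of the fluctuations $G_j^*G_{k}$ ($j\ne k$) and of $G_j^*G_j-I$ behave like $n^{-1/2}$ times a power of $M$ smaller than $M^{1/r}$, with room to spare. Summing over levels then uses only the fine-index Hölder bounds $\ell^{q_1}\to\ell^1$ (cost $L^{(1-1/q_1)_+}$) and $\ell^2\to\ell^{q_2}$ (cost $L^{(1/q_2-1/2)_+}$), with a Gaussian square-function/type-2 bound in between. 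The resulting bound is $N^{-1/2}L^{\Gamma_s}$. Choosing the window length $L\asymp\Lambda_N$ and checking that the total rank is still $\lesssim N$ gives the upper bound.

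\textbf{Case $p_2=r<p_1'$.} I would dualize. On each finite window the adjoint of $D_{q_1,q_2}$ is the same diagonal map from $(\ell_{p_2'})_{\ell^{q_2'}}$ to $(\ell_{p_1'})_{\ell^{q_1'}}$, and there the roles become $p_2'{}'=p_2=r<p_1'$, i.e.\ the case just handled with $(q_1,q_2)$ replaced by $(q_2',q_1')$. A computation shows $\Gamma_s(q_2',q_1')=\Gamma_t(q_1,q_2)$. The identity $a_N(T)=a_N(T^*)$ for finite-dimensional Banach operators transfers the bound when $q_i\ge1$ and $p_1\ge1$. For quasi-Banach indices I would first replace the source by its Banach envelope (convex hull of the unit ball), which leaves $\ell_{p_1}$-blocks as $\ell_1$-blocks and $\ell^{q_1}$ as $\ell^1$. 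This matches $p_1'=\infty$ and $(1/2-1/q_1)_+$ frozen at $q_1=1$. Since $D$ is a finite compression plus a tail, the window bound extends to the full space.

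\textbf{Lower bounds.} Following the paper's pointer, I would invoke Malykhin's finite-window estimate as stated in Appendix~\ref{app:malykhin}. This gives $a_N(D^{\mathcal W_N})\gtrsim N^{-1/2}L^{\Gamma}$ on a window of $L\asymp\Lambda_N$ levels for the model indices. It is then transported to the general $(p_1,p_2,q_1,q_2)$ by inclusions of inner indices and by the norm comparisons $\|J\|,\|K\|$ as in Proposition~\ref{result:3.7}, together with the convex-hull and adjoint steps above. Lemma~\ref{result:2.2} handles small $N$.

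\textbf{Main obstacle.} The main difficulty is the Gaussian upper bound: controlling $\|D-A\|$ uniformly over the unit ball of a mixed quasi-norm space with an approximant that couples levels. Off-diagonal cross terms must not accumulate a full extra factor $L^{1/2}$. I would attempt it through a chaining/union bound over extreme points of the $\ell_{p_1}^{M_j}$ balls (for $p_1\le1$, the coordinate vectors), combined with a noncommutative Khintchine bound for $\sum_j t_jG_j^*G_j$.
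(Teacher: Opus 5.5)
Your handling of the cases with $\Gamma=1/2$, the lower bounds via Proposition~\ref{app:malykhin:consequences}, and the reduction of $p_2=r<p_1'$ to $p_1'=r<p_2$ by finite-dimensional adjoints and Banach envelopes (with $\Gamma_s(q_2',q_1')=\Gamma_t(q_1,q_2)$) all match what the paper does. The gap is in the Gaussian upper bound for $p_1'=r<p_2$, on which both cases rest.

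First, your tail is misplaced. $t_j=2^{-j/r}\le N^{-1/2}$ holds only for $j\ge(r/2)\log_2N-O(1)$; at $j=\log_2N+C\log\Lambda_N$ one has $t_j\approx N^{-1/r}\Lambda_N^{-C/r}\gg N^{-1/2}$. Exact reproduction is affordable only while $M_j\lesssim N$. So the levels that must be approximated fill the whole range $\log_2N-O(1)\le j\le(r/2)\log_2N$, whose endpoints have ratio $r/2$.

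On this range a single coupled Gaussian approximant does not achieve the claimed error. Take the rank-$n$ operator $A=G^*GD$ with $G=[G_1,\dots,G_L]$, i.e.\ $A_{jk}=t_kG_j^*G_k$; your formula $\sum_jt_jG_j^*G_j$ is block diagonal and has rank up to $Ln$. Let $e_i$ be a coordinate vector at level $k$, and let $g_i$ be the $i$th column of $G_k$. The component of $(D-A)e_i$ at a level $j\ne k$ is $-t_kG_j^*g_i$. Conditionally on $g_i$, its $M_j$ entries are independent $N(0,\|g_i\|_2^2/n)$, so with high probability $\|D-A\|\gtrsim t_kM_j^{1/p_2}n^{-1/2}\approx2^{j/p_2-k/r}n^{-1/2}$. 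With $k\approx\log_2N$, $j\approx(r/2)\log_2N$ and $n\lesssim N$, this is $N^{-1/2}N^{r/(2p_2)-1/r}$. That is a polynomial loss whenever $p_2<r^2/2$, e.g.\ for $(p_1,p_2)=(4/3,6)$, $(q_1,q_2)=(1,2)$, where $\Gamma_s=0$.

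So the real obstacle is not an extra factor $L^{1/2}$ but a power of $N$. The gap $p_2>r$ gives room only when $M_j^{1/p_2}\lesssim M_k^{1/r}=t_k^{-1}$, i.e.\ $j\le(p_2/r)k+O(1)$. Diagonal reweighting of the levels does not help: testing single output coordinates against spread inputs forces the weight ratios to be at most logarithmic.

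The missing idea is the paper's grouping:
\begin{enumerate}
\item Split the range into a bounded number $K$ of consecutive groups $\{a,\dots,b\}$ with $b\le\theta a$ for a fixed $1<\theta<p_2/r$.
\item Use an independent Gaussian approximant of rank $\lfloor N/(4K)\rfloor$ on each group. In Lemma~\ref{result:4.2} the offending term is then $n^{-1/2}\|C\|w(B)\lesssim n^{-1/2}2^{b/p_2-a/r}\lesssim n^{-1/2}$.
\item Use $q_1\le q_2$ to bound the block-diagonal sum of the group errors by the largest one.
\end{enumerate}
Only for $p_2=\infty$ does one Gaussian on the whole range suffice. Since your dual case reduces to this one with target inner index $p_1'$, the grouping is needed there as well whenever $p_1'<\infty$.
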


For $q_1<q_2$, the upper bound improves when $q_1<2$ and
$q_2>1$ in the first case, and when $q_2>2$ in the second.
The strict inequality between the inner indices allows us to approximate
several levels together using a fixed proportion of the available rank.

\subsection{Gaussian finite-rank approximation}
For maps $B:\ell_2^m\to Y$ and $C:X\to\ell_2^m$ between
finite-dimensional real Banach spaces, write
\[
 w(B)=\mathbb E\|Bg\|_Y,\qquad
 w(C)=\mathbb E\|C^*g\|_{X^*},\qquad g\sim N(0,I_m).
\]

\begin{lemma}\label{result:4.2}
Let $G$ be an $m\times n$ matrix with independent standard real
Gaussian entries. Then
\begin{equation}
\mathbb E\left\|B\left(\frac{GG^*}{n}-I_m\right)C\right\|
\le C_0\left[
 \frac{\|B\|w(C)+\|C\|w(B)}{\sqrt n}
 +\frac{w(B)w(C)}n\right].
\label{eq:81}
\end{equation}
Consequently, $BC$ has an approximant of rank at most $n$ whose
error satisfies this bound up to an absolute constant. The same statement
holds for the complex-linear extensions of real matrices between the
mixed-norm spaces used below.
\end{lemma}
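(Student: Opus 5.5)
The plan is to read the left-hand side of \eqref{eq:81} as the supremum of a product empirical process indexed by two sets of vectors in $\mathbb R^m$, and then to apply a known chaining bound for such processes. Write $g_1,\dots,g_n$ for the columns of $G$, so that
\[
 \frac{GG^*}{n}-I_m=\frac1n\sum_{k=1}^n\bigl(g_kg_k^*-I_m\bigr).
\]
Put $U=\{B^*y^*:\|y^*\|_{Y^*}\le1\}$ and $V=\{Cx:\|x\|_X\le1\}$. Then
\[
 \Bigl\|B\Bigl(\frac{GG^*}{n}-I_m\Bigr)C\Bigr\|
 =\sup_{u\in U,\,v\in V}\frac1n\sum_{k=1}^n\bigl(\langle g_k,u\rangle\langle g_k,v\rangle-\langle u,v\rangle\bigr).
\]
Both index sets are symmetric convex bodies. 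Their Euclidean radii are $\|B\|$ and $\|C\|$. Their Gaussian widths are
\[
 \mathbb E\sup_{u\in U}\langle g,u\rangle=\mathbb E\|Bg\|_Y=w(B),\qquad
 \mathbb E\sup_{v\in V}\langle g,v\rangle=\mathbb E\|C^*g\|_{X^*}=w(C),
\]
by duality in finite dimensions.

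The key step is the product-process estimate of Mendelson (equivalently, Dirksen's generic-chaining tail bound for products of linear processes). For independent isotropic Gaussian, hence subgaussian, vectors it gives
\[
 \mathbb E\sup_{u\in U,v\in V}\Bigl|\frac1n\sum_k\bigl(\langle g_k,u\rangle\langle g_k,v\rangle-\langle u,v\rangle\bigr)\Bigr|
 \lesssim\frac{\operatorname{rad}(U)\gamma_2(V)+\operatorname{rad}(V)\gamma_2(U)}{\sqrt n}+\frac{\gamma_2(U)\gamma_2(V)}{n}.
\]
Talagrand's majorizing measure theorem gives $\gamma_2(U)\asymp w(B)$ and $\gamma_2(V)\asymp w(C)$, and this is exactly \eqref{eq:81}.

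I expect the main obstacle to be getting the mixed term $w(B)w(C)/n$ with no other cross term. A self-contained route would polarize the bilinear form into two quadratic processes $\sup_w\bigl||G^*w|^2/n-|w|^2\bigr|$. Each quadratic process can be bounded by Gordon's/Chevet's inequality together with Gaussian concentration of $\sup_w|G^*w|$ around $\sqrt n\operatorname{rad}+w$. However, rescaling $U\mapsto\lambda U$, $V\mapsto V/\lambda$ and optimizing $\lambda$ only gives $\bigl(\lambda w(B)+w(C)/\lambda\bigr)^2/n$. That can exceed $w(B)w(C)/n$ when the two ratios $w/\operatorname{rad}$ differ. So I would rely on the genuine product-process theorem rather than on polarization. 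If a self-contained argument were needed, I would handle the off-diagonal part with Gaussian decoupling: the chaos $\sum_k\langle g_k,u\rangle\langle g_k',v\rangle$ is controlled by conditioning on one family and applying Chevet twice. The diagonal part would then be handled by the quadratic bound, with the rescaling applied separately in the two regimes.

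The consequences are routine.
\begin{enumerate}
\item \emph{Rank.} The map $A=B\,(GG^*/n)\,C$ has rank at most $n$. Since the expectation of $\|BC-A\|$ is bounded by the right side of \eqref{eq:81}, some realization of $G$ attains an error at most that bound. This gives the approximant of rank at most $n$.
\item \emph{Complex extensions.} Apply the real result to the real matrices and take complex-linear extensions. For the coordinatewise mixed (quasi-)norm spaces used below, the norm of a complexified real operator is at most an absolute constant times its real norm, by splitting vectors into real and imaginary parts and using the quasi-triangle inequality. The same bound therefore holds up to a further constant.
\end{enumerate}
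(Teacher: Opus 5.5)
Your argument is correct, but it takes a different route from the paper. It rests on a heavy black box: Mendelson's product-process bound, plus the majorizing measure theorem to turn $\gamma_2$ into Gaussian widths. The paper's proof is self-contained and uses only Chevet's inequality $\mathbb E\|BHC\|\le\sqrt2\,[\|B\|w(C)+\|C\|w(B)]$.

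The paper also eliminates the ``diagonal part'' you were worried about. Since $\mathbb E\,G'G'^*=nI_m$ for an independent copy $G'$, Jensen gives $\mathbb E\|B(GG^*-nI_m)C\|\le\mathbb E\|B(GG^*-G'G'^*)C\|$. Moreover $GG^*-G'G'^*=UV^*+VU^*$, where $U=(G+G')/\sqrt2$ and $V=(G-G')/\sqrt2$ are independent standard Gaussian matrices. So only the decoupled chaos $BUV^*C$ remains. Three steps then give exactly the three terms of \eqref{eq:81}:
\begin{enumerate}
\item Chevet, applied conditionally on $V$.
\item The identity in law $Vh\sim\|h\|_2\,g$, which gives $\mathbb E\|C^*Vh\|_{X^*}\le\sqrt n\,w(C)$.
\item Chevet once more, which gives $\mathbb E\|V^*C\|\lesssim w(C)+\sqrt n\,\|C\|$.
\end{enumerate}
Your route works verbatim for isotropic subgaussian columns. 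The paper's route uses the rotation invariance of the Gaussian in order to stay elementary.

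Your reason for discarding polarization is mistaken. For $W_\pm=\lambda U\pm\lambda^{-1}V$, the quadratic bound gives, for each sign,
\[
 \frac{(\lambda w(B)+\lambda^{-1}w(C))^2}{n}+\frac{(\lambda\|B\|+\lambda^{-1}\|C\|)(\lambda w(B)+\lambda^{-1}w(C))}{\sqrt n}.
\]
The choice $\lambda^2=w(C)/w(B)$ makes the first term $4w(B)w(C)/n$. It makes the second term exactly $2(\|B\|w(C)+\|C\|w(B))/\sqrt n$, whatever the ratios $w/\mathrm{rad}$ are. This is the polarization alternative the paper mentions right after its proof, so your self-contained fallback was available all along.

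Your consequences agree with the paper: rank at most $n$, a realization with error at most the mean, and complexification via real and imaginary parts. Note that the lemma concerns Banach spaces, where the complexification constant is $2$. The quasi-norm variant you sketch is therefore not needed, and it would give index-dependent rather than absolute constants.
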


\begin{proof}
Chevet's Gaussian inequality~\cite{ref1} gives, for a standard
Gaussian matrix $H$ of compatible dimensions,
\begin{equation}
 \mathbb E\|BHC\|\le\sqrt2\,[\|B\|w(C)+\|C\|w(B)].
 \label{eq:83}
\end{equation}
To see this, let $U,V$ be the images of the target dual ball
and source ball under $B^*,C$, respectively, with Euclidean radii $R_U,R_V$. Then
\[
 \|u\otimes v-u'\otimes v'\|_2^2
 \le2R_V^2\|u-u'\|_2^2+2R_U^2\|v-v'\|_2^2.
\]
Comparison with the Gaussian process
$\sqrt2R_V\langle g,u\rangle+\sqrt2R_U\langle h,v\rangle$
proves~\eqref{eq:83}; the symmetry of the two sets allows us to take
the supremum without absolute values.

Let $G'$ be an independent copy of $G$. Convexity gives
\[
 \mathbb E\|B(GG^*-nI_m)C\|
 \le\mathbb E\|B(GG^*-G'G'^*)C\|
 \le2\mathbb E\|BUV^*C\|,
\]
where $U=(G+G')/\sqrt2$ and $V=(G-G')/\sqrt2$ are independent
standard Gaussian matrices. Apply~\eqref{eq:83} conditionally on $V$
to obtain
\[
 \mathbb E_U\|BUV^*C\|
 \le\sqrt2\,[\|B\|\mathbb E_h\|C^*Vh\|+\|V^*C\|w(B)].
\]
Here $h\sim N(0,I_n)$, and, conditionally on $h$, $Vh$ has the
distribution $\|h\|_2g$. Thus
\[
 \mathbb E_{V,h}\|C^*Vh\|\le\sqrt n\,w(C),\qquad
 \mathbb E\|V^*C\|\lesssim w(C)+\sqrt n\|C\|,
\]
where the second inequality follows again from~\eqref{eq:83}.
Dividing by $n$ proves~\eqref{eq:81}. Choose a realization of
$BGG^*C/n$ with error at most twice its expected value; its rank is at
most $n$.

For complex scalars, regard the same real matrix as a complex-linear
map. The rank of a real matrix is the same over $\mathbb R$ and $\mathbb C$.
Decomposition into real and imaginary parts bounds its complex operator
norm by twice the real norm, since the target is Banach.
\end{proof}

The covariance estimate also follows from the matrix-deviation
estimates in~\cite[Theorems~1.1 and~1.4 in the author manuscript]{ref2} by polarization.

\begin{lemma}\label{result:4.4}
Let $Y$ be Banach, and let the blocks $E_j$ satisfy the standing
completeness and uniform quasi-triangle assumptions. For $0<s\le1$,
every bounded linear map $T:(\bigoplus_jE_j)_{\ell_s}\to Y$ satisfies
\[
 \|T\|=\sup_j\|T\iota_j\|.
\]
The map extends to $(\bigoplus_jE_j)_{\ell_1}$ with the same norm
and the same rank whenever its rank is finite. Hence the approximation numbers of a
diagonal map into $Y$ do not depend on the source outer exponent
$0<s\le1$. For finite collections of finite-dimensional blocks, replacing an
inner source exponent $0<p\le1$ by $1$ also preserves operator norms
into $Y$ and approximation numbers.
\end{lemma}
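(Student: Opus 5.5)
The proof will rest on one observation: the norm of a linear map into a Banach space $Y$ depends only on the closed convex hull of the source unit ball. Both statements of Lemma~\ref{result:4.4} then reduce to identifying that convex hull.

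\emph{Norm formula.} Since $\|\iota_j\|\le1$, we have $\sup_j\|T\iota_j\|\le\|T\|$. For the reverse inequality, take a finitely supported $x=\sum_j\iota_jx_j$. The triangle inequality in $Y$ and the embedding $\ell_s\subset\ell_1$ (contractive for $0<s\le1$) give
\[
\|Tx\|\le\sum_j\|T\iota_j\|\,\|x_j\|\le\sup_j\|T\iota_j\|\Bigl(\sum_j\|x_j\|^s\Bigr)^{1/s}.
\]
Because $s<\infty$, finitely supported sequences are dense in $(\bigoplus_jE_j)_{\ell_s}$, and $T$ is continuous, so the bound holds for all $x$. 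The same estimate shows that $\sum_jT\iota_jx_j$ converges absolutely in $Y$ for every $x$ in the $\ell_1$-sum.

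\emph{Extension and rank.} Define $\widetilde T x=\sum_jT\iota_jx_j$ on $(\bigoplus_jE_j)_{\ell_1}$. This map has norm $\sup_j\|T\iota_j\|=\|T\|$ and agrees with $T$ on the $\ell_s$-sum. Its range lies in the closure of $\operatorname{ran}T$, since it is a limit of finite partial sums. If $\operatorname{ran}T$ is finite-dimensional, it is closed, so $\operatorname{ran}\widetilde T=\operatorname{ran}T$ and the ranks agree.

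\emph{Diagonal maps.} Let $D_s$ and $D_1$ be the diagonal map with source exponents $s$ and $1$, respectively. Then $D_1$ is bounded by the norm formula, since $\sup_j\|D\iota_j\|<\infty$. The contractive inclusion from the $\ell_s$-sum into the $\ell_1$-sum and the ideal property give $a_N(D_s)\le a_N(D_1)$. Conversely, let $\operatorname{rank}A<N$ on the $\ell_s$-sum. Extending $D_s-A$ gives $D_1-\widetilde A$, with $\operatorname{rank}\widetilde A=\operatorname{rank}A$ and the same norm $\sup_j\|(D-A)\iota_j\|$. Hence $a_N(D_1)\le a_N(D_s)$.

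\emph{Inner exponents.} For finitely many finite-dimensional blocks, all linear maps are bounded and the underlying vector space is unchanged. It therefore suffices to show that replacing $\ell_p^{m_j}$ with $0<p\le1$ by $\ell_1^{m_j}$ leaves the convex hull of the unit ball unchanged; then $\|T-A\|$ is the same for every pair $T,A$, and the approximation numbers coincide.

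The inclusion of the $\ell_p$ ball into the $\ell_1$ ball is clear, since $\|\cdot\|_1\le\|\cdot\|_p$. For the reverse inclusion, take a point of the $\ell_1$-type ball with block norms $\lambda_j$ satisfying $\|(\lambda_j)\|_{\ell^s}\le1$. Each block is a convex combination of the vectors $\lambda_j\varepsilon e_i$ with $|\varepsilon|=1$. Taking the product of these convex combinations over the finitely many blocks writes the point as a convex combination of vectors whose $j$-th block is $\lambda_j\varepsilon_je_{i_j}$. Such a vector has $\ell_p$-block norm $\lambda_j$, so it lies in the $\ell_p$-type ball.

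\emph{Main obstacle.} The only delicate point is this last convex-hull identification when the outer index is arbitrary. The product-of-convex-combinations argument handles every outer index, including $s<1$, because it never uses convexity of the outer quasi-norm.
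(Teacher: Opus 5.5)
Your proposal is correct and follows essentially the same route as the paper. Both use the estimate $\|\sum_jT\iota_jx_j\|\le\sup_j\|T\iota_j\|\sum_j\|x_j\|$ to get the norm formula and a rank-preserving extension to the $\ell_1$-sum, and then restrict and extend approximants for the diagonal map. For the inner exponent, both choose signed (or unimodular) coordinate atoms independently in each block and apply convexity of the target norm; your density argument and range-closure observation just make explicit details the paper leaves implicit.
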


\begin{proof}
For $s\le1$,
\[
 \Big\|\sum_jT\iota_jx_j\Big\|_Y
 \le\sup_j\|T\iota_j\|\sum_j\|x_j\|
 \le\sup_j\|T\iota_j\|
       \Big(\sum_j\|x_j\|^s\Big)^{1/s}.
\]
The first inequality shows that the same sum defines a bounded extension
to the outer $\ell_1$-sum.
Since a finite-dimensional range is closed, the extension preserves
finite rank. Restricting and extending approximants proves the
assertion about approximation numbers.

On a finite window, write each nonzero block as $x_j=c_jz_j$,
where $c_j=\|x_j\|_1$. Express $z_j$ as a convex combination of coordinate atoms,
using unimodular factors over the complex field. Choose these atoms
independently in each nonzero block and leave the zero blocks unchanged. The resulting vectors all have the same outer
norm, with inner $\ell_p$-norm $c_j$ in each block. Convexity of the target norm
gives one inequality between the operator norms for $p$ and $1$;
the reverse inequality follows from $\|x_j\|_1\le\|x_j\|_p$. Apply this
identity to each error matrix to obtain equality of the approximation
numbers.
\end{proof}

\subsection{The upper bound when \texorpdfstring{$p_1'<p_2$}{p1prime < p2}}
\begin{proposition}\label{result:4.5}
If $p_1'=r<p_2$, $q_1\le q_2$, and $q_2\ge1$, then
\[
 a_N(D_{q_1,q_2})\lesssim N^{-1/2}\Lambda_N^{\Gamma_s}.
\]
\end{proposition}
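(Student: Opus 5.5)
We may assume $\Gamma_s<1/2$, since otherwise the bound is the separate-level estimate of Proposition~\ref{result:2.7}: here $q_1\le q_2$ gives $\chi_q=0$. The plan is to split the levels into three ranges and treat each with a fixed fraction of the rank $N$.

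\emph{Low and high levels.} Fix $j_0$ with $M_{j_0}\asymp N/\Lambda_N^{A}$ for a suitable constant $A$, and $J$ with $2^{-J/r}\asymp N^{-1/2}$, so $J-j_0\asymp\Lambda_N$.
\begin{itemize}
\item On the levels $j<j_0$ we reproduce the blocks exactly. The rank used is $\sum_{j<j_0}M_j\lesssim N/\Lambda_N^A\le N/3$.
\item On the levels $j>J$ we approximate by zero. By the scalar multiplier formula with $q_1\le q_2$, the error is at most $\sup_{j>J}t_j\lesssim N^{-1/2}$.
\end{itemize}
The whole difficulty lies in the middle window $\mathcal W=\{j_0\le j\le J\}$ of $L\asymp\Lambda_N$ levels.

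\emph{Middle window.} Here we write $D^{\mathcal W}=BC$ with diagonal factors.
\begin{itemize}
\item $C=\bigoplus_j c_jI:\bigl(\bigoplus\ell_{p_1}^{M_j}\bigr)_{\ell^{q_1}}\to\ell_2^m$, where $m=\sum_{j\in\mathcal W}M_j$.
\item $B=\bigoplus_j b_jI:\ell_2^m\to\bigl(\bigoplus\ell_{p_2}^{M_j}\bigr)_{\ell^{q_2}}$, with $b_jc_j=t_j$.
\end{itemize}
We then apply Lemma~\ref{result:4.2} with $n\asymp N$. First reduce the source. Since $p_1'=r<\infty$ forces $p_1>1$, only the outer index can drop below one. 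If $q_1<1$, Lemma~\ref{result:4.4} lets us replace $q_1$ by $1$ without changing approximation numbers, because the target is Banach when $q_2\ge1$ and $p_2>2$; this is consistent with $(1-1/q_1)_+$. The target is then Banach, and the source dual is $\bigl(\bigoplus\ell_r^{M_j}\bigr)_{\ell^{q_1'}}$.

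\emph{Gaussian widths.} These are computed blockwise:
\[
w(C)\asymp\Bigl\|\bigl(c_jM_j^{1/r}\bigr)_j\Bigr\|_{\ell^{q_1'}},\qquad
w(B)\asymp\Bigl\|\bigl(b_jM_j^{1/p_2}\bigr)_j\Bigr\|_{\ell^{q_2}},
\]
with $M_j^{1/p_2}$ replaced by $\sqrt{j}$ if $p_2=\infty$. For the operator norms we use $\|\ell_2^M\to\ell_{p_2}^M\|=1$ and $\|\ell_{p_1}^M\to\ell_2^M\|=1$, giving $\|C\|=\sup_j c_j$ and $\|B\|\le\|(b_j)\|_{\ell^{q_2}}$ at worst. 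Improving the latter is where the $(1/q_2-1/2)_+$ term should come from: the $\ell_2$-source lets us use orthogonality, so $\|B\|\le\|(b_j)\|_{\ell^{\max(q_2,2)'}}$-type bounds hold via the multiplier formula from $\ell^2$ to $\ell^{q_2}$.

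\emph{Choice of weights.} Take $c_j=M_j^{-1/r}$, so $w(C)\asymp L^{1-1/q_1}$ and $\|C\|\le1$, and $b_j=t_jM_j^{1/r}\asymp1$. Then
\begin{itemize}
\item $\|B\|\lesssim L^{(1/q_2-1/2)_+}$;
\item $w(B)\lesssim\|(M_j^{1/p_2})_j\|_{\ell^{q_2}}\asymp M_J^{1/p_2}$. This sum is geometric precisely because of the strict gap $r<p_2$, and it is dominated by the top level, where $M_J^{1/p_2}\ll N^{1/2}$.
\end{itemize}
Substituting into \eqref{eq:81}, the first term $\|B\|w(C)/\sqrt N$ gives $N^{-1/2}L^{(1-1/q_1)_++(1/q_2-1/2)_+}$, which is the desired exponent. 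The remaining terms are $\|C\|w(B)/\sqrt N\lesssim N^{-1/2}M_J^{1/p_2}N^{-1/2}\cdot\ldots$ and $w(B)w(C)/N$; both must be shown to be $\lesssim N^{-1/2}$.

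\emph{Main obstacle.} The crux is controlling the terms involving $w(B)$. With $b_j\asymp1$, $w(B)\sim M_J^{1/p_2}=N^{r/(2p_2)}$, which is smaller than $\sqrt N$ only because $r<p_2$. If this exact balancing fails, I would rebalance the factorization with $b_j=2^{-\kappa(J-j)}$ tilted toward the low levels and $c_j$ adjusted accordingly. Alternatively, I would split $\mathcal W$ into $O(1)$ subwindows, each given rank $N/O(1)$, so that the geometric gain $M_j^{1/p_2-1/r}$ absorbs the cross term $w(B)w(C)/N$ while keeping $\|B\|w(C)$ at the stated order. Collecting the three ranges by the quasi-triangle inequality, with a total rank below $N$, yields $a_N(D_{q_1,q_2})\lesssim N^{-1/2}\Lambda_N^{\Gamma_s}$.
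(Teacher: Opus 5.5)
\textbf{Gap: the term $\|C\|w(B)/\sqrt n$ on a single middle window.} Your architecture matches the paper's: exact low levels, zero above $2^{-J/r}\asymp N^{-1/2}$, Lemma~\ref{result:4.4} for $q_1<1$, and Lemma~\ref{result:4.2} through $\ell_2$ with $c_j\asymp t_j$ and $b_j\asymp1$. This gives the leading term $n^{-1/2}L^{(1-1/q_1)_++(1/q_2-1/2)_+}$. The step you leave open, however, fails as planned.

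With your weights, $\|C\|=\sup_{j\in\mathcal W}c_j\asymp M_{j_0}^{-1/r}$. The bound $\|C\|\le1$ you wrote is too weak: with it the second term is not $O(N^{-1/2}\Lambda_N^{\Gamma_s})$ even for $p_2=\infty$. Even with the sharp value, $w(B)\asymp M_J^{1/p_2}\asymp N^{r/(2p_2)}$ gives
\[
\frac{\|C\|\,w(B)}{\sqrt n}\asymp N^{-1/2}\,N^{r/(2p_2)-1/r}\,\Lambda_N^{A/r}.
\]
This exceeds the target by a power of $N$ whenever $p_2<r^2/2$, and the range $r<p_2<r^2/2$ is nonempty because $r>2$. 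This is the term needing a new idea, not $w(B)w(C)/n=O(N^{-1+r/(2p_2)}\Lambda_N^{1-1/q_1})$, which is already negligible since $p_2>r$.

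\textbf{Why rebalancing cannot help.} Your first fallback cannot fix this. Write $c_j=e_jM_j^{-1/r}$, so $b_j\asymp e_j^{-1}$. Testing on single blocks gives
\[
\|B\|w(C)\gtrsim e_J/e_{j_0},\qquad \|C\|w(B)\gtrsim (e_{j_0}/e_J)M_J^{1/p_2}M_{j_0}^{-1/r}.
\]
So for every diagonal reweighting of one window, one of the first two terms of \eqref{eq:81} is at least $N^{-1/2}N^{(r/(2p_2)-1/r)/2}$, up to logarithms.

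\textbf{What the missing step needs.} Your second fallback is the right idea and is what the paper does, but it must be made quantitative.
\begin{itemize}
\item Split $\mathcal W$ into consecutive groups $\{a,\dots,b\}$ with $b\le\theta a+O(1)$ for a fixed $1<\theta<p_2/r$.
\item Apply Lemma~\ref{result:4.2} separately on each group. There $\|C\|\le2^{-a/r}$ and $w(B)\lesssim2^{b/p_2}$, so the bad term becomes $n^{-1/2}2^{b/p_2-a/r}\lesssim n^{-1/2}$.
\item The gain comes from comparing the top of a group with its bottom, not from the per-level factor $M_j^{1/p_2-1/r}$. It is also this term, not the cross term, that the splitting repairs.
\item The number $K$ of groups is bounded because $j_0\gtrsim\log_2N$ and $J/j_0\to r/2$. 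Hence each group can receive $n=\lfloor N/(4K)\rfloor\asymp N$ columns without changing the leading order.
\item Since $q_1\le q_2$, the error of the block-diagonal approximant is the maximum of the group errors.
\end{itemize}
For $p_2=\infty$ a single window does work, because $w(B)$ is then only polylogarithmic, but only with the sharp $\|C\|\lesssim M_{j_0}^{-1/r}$. At $q_1=1$, $w(C)$ is a maximum of $L$ variables and needs a concentration and union bound rather than the moment formula; this is minor.
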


\begin{proof}
If $q_1<1$, Lemma~\ref{result:4.4} reduces the problem to $q_1=1$,
with the same exponent $\Gamma_s$.
For $q_1\ge2$, the classical upper bound~\eqref{eq:26} gives
logarithmic exponent $1/2$. We therefore assume $1\le q_1\le2$.

First suppose $p_2<\infty$. For a consecutive group
$\mathcal J=\{a,\ldots,b\}$ of $L$ levels, apply
Lemma~\ref{result:4.2} with
\[
 \begin{aligned}
 X&=\Big(\bigoplus_{j\in\mathcal J}\ell_{r'}^{M_j}\Big)_{\ell_{q_1}},
 &Y&=\Big(\bigoplus_{j\in\mathcal J}\ell_{p_2}^{M_j}\Big)_{\ell_{q_2}},\\
 C&=D^{\mathcal J}:X\to\ell_2,\quad &B&=I:\ell_2\to Y.
 \end{aligned}
\]
The Euclidean radii and Gaussian widths satisfy
\begin{equation}
 \|C\|\le2^{-a/r},\qquad
 \|B\|=L^{(1/q_2-1/2)_+},
 \label{eq:84}
\end{equation}
\begin{equation}
 w(C)\lesssim L^{1-1/q_1},\qquad w(B)\lesssim2^{b/p_2}.
 \label{eq:85}
\end{equation}
For $q_1>1$, the first width bound follows from Gaussian moment
estimates with fixed exponents and $t_jM_j^{1/r}\asymp1$. At $q_1=1$, Gaussian
concentration and a union bound give
\[
 w(C)\le C_1+C_2\,2^{-a/r}\sqrt{\log(e+L)}.
\]
Thus~\eqref{eq:85} holds at this endpoint for the groups used below,
where $a\gtrsim\log(e+N)$ and $L\lesssim\log(e+N)$.
For the second bound, use
$w(B)\le\sum_{j=a}^b\mathbb E\|g_j\|_{p_2}\lesssim2^{b/p_2}$.
The resulting approximant has rank at most $n$ and error
\begin{equation}
 \lesssim n^{-1/2}
 \left[L^{1-1/q_1+(1/q_2-1/2)_+}+2^{b/p_2-a/r}\right]
 +n^{-1}L^{1-1/q_1}2^{b/p_2}.
 \label{eq:86}
\end{equation}

For sufficiently large $N$, reproduce all levels below $a_0$ exactly, choosing
$2^{a_0}\asymp cN$ with $c>0$ small enough that their total
dimension is at most $N/4$. Discard the levels above
$b_0=(r/2)\log_2N+O(1)$. The discarded tail has norm at most $CN^{-1/2}$
since $q_1\le q_2$. Choose $1<\theta<p_2/r$ and partition the
middle window into consecutive groups satisfying $b\le\theta a+O(1)$.
The number $K$ of groups is bounded independently of $N$, since
$b_0/a_0\to r/2$. Use $n=\lfloor N/(4K)\rfloor$ Gaussian columns
on each group. The exact map on the lower levels and the Gaussian approximants
together have rank less than $N$.

On these groups $2^{b/p_2-a/r}\lesssim1$ and
$2^{b/p_2}\lesssim N^{r/(2p_2)}$. Hence the last term
in~\eqref{eq:86} is
\[
 O\big(N^{-1+r/(2p_2)}\Lambda_N^{1-1/q_1}\big)
 =o\big(N^{-1/2}\Lambda_N^{\Gamma_s}\big),
\]
using $p_2>r$. The logarithmic exponent in the first term of~\eqref{eq:86} is
$\Gamma_s$ for $1\le q_1\le2$, $q_1\le q_2$.
Since $q_1\le q_2$, the norm of the direct sum of group errors is at
most the largest group norm. This proves the claim for finite $p_2$.

For $p_2=\infty$, retain the exact map on the lower levels and
discard the upper levels as above. Use one Gaussian approximation on
the entire middle window, with
$n=\lfloor N/4\rfloor$. The radii in~\eqref{eq:84} are unchanged,
and the Gaussian coordinate maximum gives
\[
 w(B)\lesssim L^{1/q_2}
 \sqrt{\log\Big(e+\sum_{j=a_0}^{b_0}M_j\Big)}
 \lesssim\Lambda_N^{1/q_2+1/2}.
\]
This includes $q_2=\infty$. The two terms in~\eqref{eq:81} containing
$w(B)$ have additional factors $N^{-1/r}$ and $N^{-1/2}$,
respectively, up to fixed powers of $\Lambda_N$, compared with $N^{-1/2}$.
They are therefore smaller than the leading term
$N^{-1/2}L^{\Gamma_s}$. Enlarging the constants covers small $N$.
\end{proof}

\subsection{The upper bound when \texorpdfstring{$p_2<p_1'$}{p2 < p1prime}}
\begin{proposition}\label{result:4.8}
If $p_2=r<p_1'$, $q_1\le q_2$, and $q_2\ge1$, then
\[
 a_N(D_{q_1,q_2})\lesssim N^{-1/2}\Lambda_N^{\Gamma_t}.
\]
\end{proposition}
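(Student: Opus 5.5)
The plan is to deduce Proposition~\ref{result:4.8} from Proposition~\ref{result:4.5} by duality on a finite window of levels. A direct Gaussian construction is not needed.

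\textbf{Reductions and splitting.} If $q_1<1$, Lemma~\ref{result:4.4} replaces $q_1$ by $1$. This is allowed because the target is Banach ($q_2\ge1$, $p_2>2$), and it leaves $\Gamma_t$ unchanged, since $(1/2-1/q_1)_+$ is computed the same way after the replacement (the $\min$ with $1/2$ absorbs any change). If $p_1<1$, the finite-window part of the same lemma replaces $p_1$ by $1$ on the window used below, so $p_1'=\infty$. We may therefore assume $1\le p_1<2$ and $1\le q_1\le q_2$, so that all spaces are Banach. If $q_1\ge2$, then $\Gamma_t=\min\{1/2,1/q_2+1/2-1/q_1\}=1/2$ and \eqref{eq:26} already suffices, so we take $1\le q_1\le2$. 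As in the proof of Proposition~\ref{result:4.5}, we split $D$ into three parts:
\begin{itemize}
\item the levels below $a_0$, with $2^{a_0}\asymp cN$, reproduced exactly with rank at most $N/4$;
\item the tail above $b_0=(r/2)\log_2N+O(1)$, which is discarded at cost $O(N^{-1/2})$ because $q_1\le q_2$;
\item the middle window $\mathcal W=\{a_0,\dots,b_0\}$ of $L\asymp\Lambda_N$ levels.
\end{itemize}
It then suffices to show $a_{n}(D^{\mathcal W}_{q_1,q_2})\lesssim N^{-1/2}\Lambda_N^{\Gamma_t}$ with $n\asymp N$.

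\textbf{Duality step.} On the finite-dimensional window, $a_n(T)=a_n(T^*)$ for maps between Banach spaces. The adjoint of $D^{\mathcal W}_{q_1,q_2}$ is the same diagonal map $(t_jI)$ acting as
\[
\Big(\bigoplus_{j\in\mathcal W}\ell_{p_2'}^{M_j}\Big)_{\ell^{q_2'}}\longrightarrow
\Big(\bigoplus_{j\in\mathcal W}\ell_{p_1'}^{M_j}\Big)_{\ell^{q_1'}} .
\]
Here the source inner index is $r'$ (since $p_2=r$), the target inner index $p_1'>r$ may equal $\infty$, and $q_2'\le q_1'$ with $q_1'\ge 2\ge1$. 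This is exactly the configuration of Proposition~\ref{result:4.5}, with $(q_1,q_2)$ replaced by $(q_2',q_1')$ and the inner target $p_1'$ in place of $p_2$. The corresponding exponent is
\[
\min\Big\{\tfrac12,\Big(1-\tfrac1{q_2'}\Big)_++\Big(\tfrac1{q_1'}-\tfrac12\Big)_+\Big\}
=\min\Big\{\tfrac12,\tfrac1{q_2}+\Big(\tfrac12-\tfrac1{q_1}\Big)_+\Big\}=\Gamma_t,
\]
so the bound transfers back to $D^{\mathcal W}$. The total rank is then less than $N$, and small $N$ are handled by enlarging constants.

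\textbf{Main obstacle.} Proposition~\ref{result:4.5} is stated for the full map, but here it is needed on the middle window. I would re-run its proof restricted to $\mathcal W$. Its estimates \eqref{eq:84}--\eqref{eq:85} and the term counting in \eqref{eq:86} only use the following facts:
\begin{itemize}
\item $a\gtrsim\Lambda_N$ and $L\lesssim\Lambda_N$ on every group;
\item groups satisfy $b\le\theta a+O(1)$ with $1<\theta<p_1'/r$, and there are boundedly many of them;
\item for $p_1'=\infty$, a single group with the Gaussian-maximum bound on $w(B)$.
\end{itemize}
These are intrinsic to the window $[a_0,b_0]$, which has the same endpoints as there. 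The remaining care is twofold. First, the rank $n=\lfloor N/(4K)\rfloor$ per group must be compatible. Second, the combination of the group errors must be checked: on the dual side $q_2'\le q_1'$, so the norm of the direct sum of group errors is again the maximum of the group norms. With both points verified, the dual windowed bound gives the claim.
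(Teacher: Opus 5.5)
There is a genuine gap in your case reduction. You use the same duality as the paper, and your exponent identity $\min\{1/2,(1-1/q_2')_+ +(1/q_1'-1/2)_+\}=\Gamma_t$ is correct. The gap lies in the claim that $q_1\ge2$ forces $\Gamma_t=1/2$, which is false. For $q_1\ge2$ one has $(1/2-1/q_1)_+=1/2-1/q_1$, so, using $q_1\le q_2$,
\[
\Gamma_t=\min\{1/2,\,1/2+1/q_2-1/q_1\}=1/2+1/q_2-1/q_1 .
\]
This is strictly below $1/2$ whenever $2<q_1<q_2$. For example, $q_1=4$, $q_2=\infty$ gives $\Gamma_t=1/4$, while \eqref{eq:26} only yields the exponent $\chi_q+1/2=1/2$. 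As written, your argument therefore leaves the whole range $2<q_1<q_2\le\infty$ unproved.

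You have transplanted the reduction from Proposition~\ref{result:4.5}, where $q_1\ge2$ does force $\Gamma_s=1/2$. In the present case it is $q_2\le2$ that forces $\Gamma_t=1/2$. The repair is to drop the restriction $q_1\le2$. For every $1\le q_1\le q_2$ the adjoint has outer indices $q_2'\le q_1'$ with $q_1'\ge1$, so Proposition~\ref{result:4.5} applies to it without further restriction. Its own internal reduction concerns the dual source index $q_2'\ge2$, i.e.\ $q_2\le2$, and your exponent identity already holds for all such $q_1$. The paper's proof records exactly this: when $2\le\bar q_1\le q_2$ it reads off the exponent $1/2+1/q_2-1/\bar q_1$ from the dual.

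Your ``main obstacle'' is not one. The adjoint of the windowed map is a compression of the infinite diagonal model of Proposition~\ref{result:4.5}, with inner indices $r',p_1'$ and outer indices $q_2',q_1'$, by a norm-one coordinate injection and projection. The ideal property then bounds its approximation numbers directly by the stated full-map estimate, so there is no need to re-run the Gaussian construction on the window. The paper uses this shortcut. It truncates at a level $J$ with $\sup_{j>J}t_j\le N^{-1/2}\Lambda_N^{\Gamma_t}$ and dualizes the entire truncation, so no separate exact reproduction of the low levels is needed. It then applies Proposition~\ref{result:4.5}, dualizes the approximant back, extends it by zero and adds the tail.
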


\begin{proof}
Set $\bar p_1=\max\{p_1,1\}$ and
$\bar q_1=\max\{q_1,1\}$. The target is Banach on every finite
window. By Lemma~\ref{result:4.4}, the approximation numbers are
unchanged when the source indices are replaced by
$(\bar p_1,\bar q_1)$. On each finite window, the adjoint between the
resulting Banach spaces has inner indices $r',\bar p_1'$
with $r<\bar p_1'\le\infty$ and
outer indices $u=q_2'\le v=\bar q_1'$. Taking adjoints of
finite-rank approximants in each direction gives
$a_N(T)=a_N(T^*)$, including the $1,\infty$ endpoints.
Proposition~\ref{result:4.5} gives
\[
 \Gamma_s(u,v)=\begin{cases}
 \min\{1/2,1/q_2\},&\bar q_1\le2,\\
 1/2+1/q_2-1/\bar q_1,&2\le\bar q_1\le q_2,
 \end{cases}
\]
which equals $\Gamma_t$.

To pass to the infinite map, fix $N$ and truncate at a level $J$
for which the tail norm is at most
$N^{-1/2}\Lambda_N^{\Gamma_t}$. This tail norm is $\sup_{j>J}t_j$.
The adjoint of this finite truncation is a contractive compression
of the infinite model in Proposition~\ref{result:4.5}. Apply its upper estimate,
take the finite-dimensional adjoint of the approximant, and extend
it by zero. This preserves the rank. Apply Lemma~\ref{result:4.4} to return to
the original source indices, and add the tail bound to obtain the estimate. This also covers $p_1\le1$
and $q_2=\infty$. All adjoints are taken on finite windows,
including at an outer-$\infty$ endpoint.
\end{proof}

\begin{proof}[Proof of Theorem~\ref{result:4.1}]
For $q_2\ge1$ the upper bounds follow from
Propositions~\ref{result:4.5} and~\ref{result:4.8}.
For $q_1\le q_2\le1$, both exponents are $1/2$ and the
classical block bound~\eqref{eq:26} applies. The matching lower
bounds follow from Proposition~\ref{app:malykhin:consequences}.
For Banach fine indices, its exponents $B_s,B_t$ equal
$\Gamma_s,\Gamma_t$ when $q_1\le q_2$. Lemma~\ref{result:4.4}
covers $q_1<1\le q_2$, and the final assertion of
Proposition~\ref{app:malykhin:consequences} covers $q_2<1$.
For the finitely many values of $N$ not covered by the window constructions,
positivity of a sufficiently large finite block compression gives the lower bound.
\end{proof}

\section{Approximation numbers on sequence spaces and domains}
\label{sec:complete-classification}

\begin{theorem}\label{result:5.1}
Let $0<p_1<2<p_2\le\infty$, $2<r=\min\{p'_1,p_2\}<\infty$,
$M_j\asymp2^j$, and $t_j=2^{-j/r}$. For every $0<q_1,q_2\le\infty$,
\begin{equation*}
 a_N(D_{q_1,q_2})\asymp N^{-1/2}\Lambda_N^\Gamma,
 \qquad N\ge1,
\end{equation*}
where $\Gamma$ is given by~\eqref{eq:4}. When a fine index is infinite, we use
the full space of bounded block sequences.
\end{theorem}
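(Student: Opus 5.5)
Theorem~\ref{result:5.1} is an assembly of the earlier sequence-space results, so I will split into cases according to~\eqref{eq:4} and match each case with a result already proved. Throughout, I will treat $N\ge2$ first and handle $N=1$ (and any finite set of small $N$) at the end. For bounded $N$, Lemma~\ref{result:2.2} gives $a_N(D)>0$, and boundedness of $D$ gives $a_N(D)\le\|D\|$; since $N^{-1/2}\Lambda_N^\Gamma$ is bounded above and below on a finite set, the comparison extends to all $N\ge1$. Also $\Lambda_N\asymp\log(e+N)$, so nothing changes in the normalization.

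\emph{Case $q_1\ge q_2$.} Here $\Gamma=1/2+1/q_2-1/q_1$, and Proposition~\ref{result:3.7} gives exactly this order for all $0<q_2\le q_1\le\infty$. This covers every ordering of $p_1'$ and $p_2$, including $p_1\le1$ and $p_1'=p_2$.

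\emph{Case $q_1<q_2$ with $p_1'=p_2=r$.} Then $p_1=r'$ and $p_2=r$, so Proposition~\ref{result:2.8} applies. In this case $\chi_q=(1/q_2-1/q_1)_+=0$, so it gives $\Gamma_q=1/2$, which is the second line of~\eqref{eq:4}.

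\emph{Case $q_1<q_2$ with $p_1'\neq p_2$.} This is Theorem~\ref{result:4.1} with $q_1\le q_2$. Its exponents $\Gamma_s$ and $\Gamma_t$ coincide literally with the third and fourth lines of~\eqref{eq:4}, respectively for $p_1'=r<p_2$ and $p_2=r<p_1'$. The convention $p_1'=\infty$ for $p_1\le1$ places that range in the $p_2=r<p_1'$ subcase, as in Theorem~\ref{result:4.1}.

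The point I must check is that the three cases really exhaust all pairs $(q_1,q_2)$ and all admissible $(p_1,p_2)$. The finiteness of $r$ forces either $p_1'=r\le p_2$ or $p_2=r\le p_1'$, so the subcases partition correctly. The other check is that each cited result uses the full space of bounded block sequences whenever $q_1$ or $q_2$ is infinite, as the section on the normalized sequence problem fixes throughout, and that each is stated with constants depending only on the fixed parameters and on $M_j\asymp2^j$. I will confirm that the cited propositions allow $q_i=\infty$ and $p_2=\infty$; for instance, the $p_2=\infty$ branch of Proposition~\ref{result:4.5} and the adjoint argument at outer-$\infty$ endpoints in Proposition~\ref{result:4.8}. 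Granting that, the theorem follows directly, and I expect no step to be a genuine obstacle beyond this bookkeeping of the endpoint cases.
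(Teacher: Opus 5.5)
Your proposal is correct and matches the paper's proof: you use the same split into $q_1\ge q_2$ (Proposition~\ref{result:3.7}), $q_1<q_2$ with $p_1'=p_2$ (Proposition~\ref{result:2.8}, where $\chi_q=0$), and $q_1<q_2$ with $p_1'\ne p_2$ (Theorem~\ref{result:4.1}), with Lemma~\ref{result:2.2} covering the finitely many small $N$. The endpoint bookkeeping you defer is what the paper handles in Section~\ref{sec:infinite-indices}: coordinate compression for the lower bounds, and the tail formula~\eqref{eq:91} with the $\vartheta$-power triangle inequality for the upper bounds; the results you cite already build this in.
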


\begin{proof}
For $q_1\ge q_2$, apply Proposition~\ref{result:3.7}. For $q_1<q_2$,
Proposition~\ref{result:2.8} treats $p'_1=p_2$, and
Theorem~\ref{result:4.1} treats the two strict orderings of the inner indices. For both finite
and infinite fine indices, the finite-window estimates pass to the full
spaces by the argument below. Lemma~\ref{result:2.2} handles the
finitely many remaining values of $N$.
\end{proof}

\subsection{Full bounded coefficient spaces}
\label{sec:infinite-indices}

At $q_1=\infty$, finite-window lower bounds apply to every bounded
finite-rank operator on the full source. Indeed, coordinate projection
$P_W$ and zero extension $\iota_W$ have norm one, and
\[
 \rank(P_WA\iota_W)\le\rank A,
 \qquad \|P_W(D-A)\iota_W\|\le\|D-A\|.
\]
For the upper bounds, extend each finite-window approximant using these
coordinate maps. Let $D^{\le J}$ denote the diagonal map truncated after
level $J$. Then
\begin{equation}\label{eq:91}
 \|D-D^{\le J}\|=
 \begin{cases}
  \sup_{j>J}t_j,&q_1\le q_2,\\[2pt]
  \left(\displaystyle\sum_{j>J}t_j^a\right)^{1/a},
   &q_1>q_2,\quad 1/a=1/q_2-1/q_1,
 \end{cases}
 \qquad t_j=2^{-j/r}.
\end{equation}
The identities on the inner coordinate spaces have norm one, attained at
coordinate atoms. Thus the tail norm equals the scalar diagonal norm on
the outer spaces. Both expressions are comparable to $2^{-J/r}$, also when
$q_1=\infty$. With $\vartheta=\min\{1,q_2\}$, interpreted as $1$ at
$q_2=\infty$, the power triangle inequality in the target space gives
\[
 a_N(D)^\vartheta\le a_N(D^{\le J})^\vartheta
       +\|D-D^{\le J}\|^\vartheta.
\]
Thus the uniform finite-window estimates pass to the full spaces.
In particular, for $q_1=\infty$ and $q_2<1$, the required lower estimate
comes from Theorem~\ref{result:3.6} when $p_1'<p_2$ and
Proposition~\ref{result:3.3} when $p_2\le p_1'$.

Restricting the source to the space of sequences whose block norms tend to zero,
$X_0\subset X_\infty=(\bigoplus_j\ell_{p_1}^{M_j})_{\ell_\infty}$, leaves
the approximation numbers unchanged, with target
$Y=(\bigoplus_j\ell_{p_2}^{M_j})_{\ell_{q_2}}$.
To see this, write a bounded finite-rank map on
$X_0$ as $A_0x=\sum_{i=1}^k f_i(x)y_i$, where the $f_i$ are continuous
linear functionals on $X_0$ and $y_i\in Y$. Set $f_{ij}=f_i\iota_j$.
Choosing nearly norming vectors and signs or phases on each finite set
of blocks gives
\[
 \sum_j\|f_{ij}\|\le\|f_i\|,
 \qquad f_i(x)=\sum_j f_{ij}(x_j)\quad(x\in X_0),
\]
where the identity follows by density of finite truncations. This
argument also applies when $p_1<1$. The absolutely convergent series
extends $f_i$ to $X_\infty$ and hence gives a bounded extension $A$ of
rank at most $k$. For $x\in X_\infty$, its finite truncations $x^{(J)}$ satisfy
$A_0x^{(J)}\to Ax$ in the finite-dimensional range and $Dx^{(J)}\to Dx$
by~\eqref{eq:91}. Since $\|x^{(J)}\|\le\|x\|$, passage to the limit gives
$\|D-A\|\le\|D|_{X_0}-A_0\|$. Restriction gives the reverse inequality,
and hence
\[
 a_N(D:X_\infty\to Y)=a_N(D|_{X_0}:X_0\to Y).
\]

\subsection{Geometric multiplicities}

Coordinate factorizations compare the block multiplicities in different
dimensions while preserving the approximation index.

\begin{proposition}\label{result:6.1}
Let $d\in\mathbb N$, $0<r<\infty$, and $0<p_i,q_i\le\infty$ for $i=1,2$.
Suppose $M_j\asymp2^{jd}$ and $K_k\asymp2^k$ for all sufficiently large
levels, with arbitrary finite nonnegative initial multiplicities.
Write
\[
 X_i^{(d)}=\left(\bigoplus_j\ell_{p_i}^{M_j}\right)_{\ell_{q_i}},
 \qquad
 X_i^{(1)}=\left(\bigoplus_k\ell_{p_i}^{K_k}\right)_{\ell_{q_i}},
 \qquad \ell_p^0=\{0\}.
\]
Let $D_d$ and $D_1$ multiply their respective blocks by $2^{-jd/r}$ and
$2^{-k/r}$. If one of these maps is bounded, then both are bounded, and
\begin{equation}\label{eq:92}
 a_N(D_d)\asymp a_N(D_1),\qquad N\ge1.
\end{equation}
The constants may depend on the fixed parameters and multiplicities,
but not on $N$. At infinite outer indices, we use the full spaces of bounded
block sequences.
\end{proposition}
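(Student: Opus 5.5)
The plan is to realize each map as a composition $D_1=S\,D_d\,R$ and $D_d=S'\,D_1\,R'$. Here $R,R',S,S'$ are bounded coordinate maps: injections, projections and bounded diagonal rescalings. The ideal property $a_N(STR)\le\|S\|a_N(T)\|R\|$ then gives~\eqref{eq:92} with no shift in $N$, and the same factorizations show that boundedness of one map implies boundedness of the other. The guiding observation is that a $d$-dimensional level $j$ has about $2^{jd}$ coordinates with weight $2^{-jd/r}$. A run of $d$ consecutive one-dimensional levels $k\in[jd+c,jd+c+d)$ has about $2^{jd+c}$ coordinates with weights in $[2^{-(jd+c+d)/r},2^{-(jd+c)/r}]$. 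So the two diagonal maps agree up to a regrouping of coordinates and a bounded change of weights, once the fixed shift $c$ is chosen.

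\emph{Step 1 (the regrouping).} Fix an integer $c\ge0$, to be taken large, and put $W_j=\{jd+c,\ldots,jd+c+d-1\}$. These windows are pairwise disjoint. Since $K_k\asymp2^k$ for all large $k$, choosing $c$ large gives $\sum_{k\in W_j}K_k\ge M_j$ for every $j\ge0$, including the finitely many initial levels with arbitrary multiplicities. Define $R':X_1^{(d)}\to X_1^{(1)}$ by placing the $M_j$ coordinates of block $j$ injectively into the coordinates of the blocks $k\in W_j$, and setting the unused coordinates and the levels outside $\bigcup_jW_j$ to zero. Let $S':X_2^{(1)}\to X_2^{(d)}$ read these coordinates back and multiply block $j$ by $2^{-jd/r}\cdot 2^{k/r}$ on the part coming from level $k\in W_j$. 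Then $S'D_1R'=D_d$. The rescaling factors lie in $[2^{c/r},2^{(c+d)/r}]$, so they are bounded.

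\emph{Step 2 (norms).} I need $\|R'\|,\|S'\|\lesssim1$. For one block, splitting $x\in\ell_p^{M}$ into at most $d$ pieces and taking the $\ell_q$ quasi-norm of the piece norms is equivalent to $\|x\|_p$, with constants depending only on $d,p,q$. This is the equivalence of $\ell_p^d$ and $\ell_q^d$ quasi-norms. The windows are disjoint, and levels outside them carry zeros. Hence the outer $\ell_q$ quasi-norms over $j$ and over $k$ agree up to these constants, for every $0<p,q\le\infty$ and also at infinite outer indices with the full bounded-sequence spaces.

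\emph{Step 3 (the reverse factorization).} This is the symmetric construction. Each one-dimensional level must be sent into a higher-dimensional block with room for it. Group $k\in[jd,jd+d)$ and send this group into block $j+c'$, choosing $c'$ so that $M_{j+c'}\ge\sum_{k=jd}^{jd+d-1}K_k$ for all $j$, initial levels included. The injection is $R$, the readback with rescaling by $2^{-k/r}2^{(j+c')d/r}\asymp1$ is $S$, and $D_1=S\,D_d\,R$. Boundedness follows as in Step 2.

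The only delicate point I expect is the initial levels and the zero-dimensional blocks. A naive splitting-off of finitely many levels would give only $a_{N+n_0}\lesssim a_N$, which is useless without a doubling property. The fixed shifts $c$ and $c'$ avoid this: they push every level, including the initial ones, into a region where the asymptotics $M_j\asymp2^{jd}$ and $K_k\asymp2^k$ hold with room to spare. Only finitely many weight ratios then involve initial levels, and these are bounded because all weights are positive. Blocks $\ell_p^0=\{0\}$ simply contribute nothing to the factorizations.
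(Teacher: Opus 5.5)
Your proposal is correct and follows the paper's proof. Your Step 3 is exactly the paper's factorization $D_1=QD_dP$: group the $d$ consecutive one-dimensional levels into one shifted block, rescale by $2^{(dh-a)/r}$, and bound $P$ and $Q$ by the $\ell_u^d$--$\ell_v^d$ comparison, using fixed level shifts so that $N$ is unchanged. The only difference is your Step 1: the paper injects block $j$ into the single block $dj+c$ (with $K_{dj+c}\ge M_j$ for large $c$), which gives $\|J\|\le1$ with no quasi-norm comparison, so your splitting over the $d$ levels of $W_j$ is valid but unnecessary.
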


\begin{proof}
Choose constants so that
\[
 M_j\le C_M2^{jd},\qquad K_k\le C_K2^k
\]
at every level, and $M_j\ge a_M2^{jd}$, $K_k\ge a_K2^k$ beyond fixed
initial levels. Choose a nonnegative integer $c$ beyond the threshold at which
$K_k\ge a_K2^k$ holds, with $a_K2^c\ge C_M$. Inject block $j$ into
the first $M_j$ coordinates of block $dj+c$, and denote this injection by $J$.
Let $R$ project those coordinates back and multiply them by $2^{c/r}$.
Then
\begin{equation}\label{eq:94}
 D_d=RD_1J,\qquad \|J\|\le1,\quad\|R\|\le2^{c/r}.
\end{equation}

Conversely, choose $h$ beyond the lower-bound threshold for $M_j$ with
$a_M2^{dh}\ge C_K(2^d-1)$. Since
\[
 \sum_{a=0}^{d-1}K_{di+a}\le C_K(2^d-1)2^{di}\le M_{i+h},
\]
we can place the $d$ consecutive blocks of dimensions $K_{di+a}$, $0\le a<d$,
in disjoint coordinate sets of the block of dimension $M_{i+h}$. Denote the corresponding injections and projections
by $\iota_{ia}$ and $\pi_{ia}$, and set
\[
 (Pz)_{i+h}=\sum_{a=0}^{d-1}\iota_{ia}z_{di+a},\quad
 (Pz)_j=0\ (j<h),\qquad
 (Qy)_{di+a}=2^{(dh-a)/r}\pi_{ia}y_{i+h}.
\]
The finite-dimensional comparison
\begin{equation*}
 \|b\|_{\ell_u^d}\le d^{(1/u-1/v)_+}\|b\|_{\ell_v^d},
 \qquad 0<u,v\le\infty,
\end{equation*}
applied to the disjoint coordinate sets gives
\[
 \|P\|\le d^{(1/p_1-1/q_1)_+},\qquad
 \|Q\|\le2^{dh/r}d^{(1/q_2-1/p_2)_+}.
\]
These estimates use sums of powers or maxima and therefore also hold for
indices below one and at infinity. Evaluating each coordinate gives the second
factorization
\begin{equation}\label{eq:99}
 D_1=QD_dP,
 \qquad
 (QD_dPz)_{di+a}=2^{-(di+a)/r}z_{di+a}.
\end{equation}
Both factorizations hold on the full spaces, so boundedness of either
map implies boundedness of the other. Applying the ideal
property to~\eqref{eq:94} and~\eqref{eq:99} proves~\eqref{eq:92}, with
the same $N$ in both directions. The fixed shifts account for all initial
multiplicities.
\end{proof}

\subsection{Besov embeddings on bounded Lipschitz domains}
\label{sec:domain-transfer}

Rychkov's extension operator $E_\Omega$ satisfies $R_\Omega E_\Omega=\Id$
and is bounded from $B_{p,q}^s(\Omega)$ to $B_{p,q}^s(\mathbb R^d)$ for
every $s\in\mathbb R$ and $0<p,q\le\infty$; see~\cite{ref6}
and~\cite[Lemma~6.3]{ref7}. Here $R_\Omega$ is distributional restriction.
For the two fixed Besov spaces, choose a common compactly supported
orthonormal wavelet system with $C^k$ regularity and vanishing
moments through order $k$, where
\begin{equation*}
 k>\max_{i=1,2}\left\{s_i,\frac{2d}{p_i}+\frac d2-s_i\right\}.
\end{equation*}
Its analysis and synthesis maps $W,W^{-1}$ give coefficient
isomorphisms for both Besov spaces. The wavelet expansions converge unconditionally in $\mathcal S'$, also at
infinite indices; see~\cite[Theorem~2.7]{ref3}
and~\cite[Theorem and Corollary~5]{ref5}. For $L^2$-normalized wavelets the
coefficient quasi-norm is
\begin{equation}\label{eq:101}
 \left\|\left(2^{j(s+d/2-d/p)}
       \|(c_{j,m,e})_{m,e}\|_{\ell_p}\right)_{j\ge0}\right\|_{\ell_q},
\end{equation}
with the scaling coefficients and the finitely many wavelet directions
included in the inner blocks.

\begin{proof}[Proof of Theorem~\ref{result:1.1}]
The critical identity gives
\[
 \delta=s_1-s_2-d/p_1+d/p_2
       =d\max\{1/p_2,1-1/p_1\}=d/r>0.
\]
For $p_1\le1$, the maximum is $1/p_2$, in agreement with $p'_1=\infty$.
The strict inequality~\eqref{eq:2} excludes the simultaneous case
$p_1\le1$, $p_2=\infty$, and the assumptions on the inner indices give
$2<r<\infty$.

Let $\mathcal H_j$ contain the level-$j$ wavelets whose supports meet
$\overline\Omega$. Fix a closed cube $Q\subset\Omega$ with nonempty
interior, and let $\mathcal K_j$ consist of the wavelets of one direction
supported in $Q$. Compact support and the lattice of translations give
\begin{equation}\label{eq:103}
 c2^{jd}\le|\mathcal K_j|\le|\mathcal H_j|\le C2^{jd}
\end{equation}
for all sufficiently large $j$. For a collection $\mathcal L$, let
$b_i(\mathcal L)$ be the restricted coefficient space with
quasi-norm~\eqref{eq:101} at $(s_i,p_i,q_i)$, and let
$J_{\mathcal L}:b_1(\mathcal L)\to b_2(\mathcal L)$ denote the
coefficientwise identity. Multiplication by the two coefficient weights
identifies $J_{\mathcal L}$ with
\begin{equation*}
 D_{\mathcal L}:\left(\bigoplus_j\ell_{p_1}^{|\mathcal L_j|}\right)_{\ell_{q_1}}
 \longrightarrow
 \left(\bigoplus_j\ell_{p_2}^{|\mathcal L_j|}\right)_{\ell_{q_2}},
 \qquad (D_{\mathcal L}x)_j=2^{-jd/r}x_j.
\end{equation*}
Changes in finitely many initial weights give bounded invertible
multipliers, and omitted levels have zero multiplicity.
Proposition~\ref{result:6.1} and Theorem~\ref{result:5.1} therefore apply
to both collections.

Let $P_{\mathcal L},Q_{\mathcal L}$ denote coordinate deletion and zero
insertion, respectively. The wavelets discarded in the following
factorization vanish on $\Omega$, so
\begin{equation}\label{eq:105}
 I_\Omega=R_\Omega W^{-1}Q_{\mathcal H}J_{\mathcal H}
               P_{\mathcal H}WE_\Omega.
\end{equation}
All factors other than $J_{\mathcal H}$ are bounded. The tails of
$J_{\mathcal H}$ tend to zero in norm by geometric decay, so
$J_{\mathcal H}$ is compact, including at infinite fine indices. The factorization therefore
defines a compact Besov embedding and gives the upper estimate.

For the lower estimate, set
\[
 S_1=R_\Omega W^{-1}Q_{\mathcal K},\qquad
 C_2=P_{\mathcal K}WE_\Omega.
\]
Coefficients corresponding to wavelets supported inside $\Omega$ depend only on
the distribution restricted to this domain, including for negative
smoothness. To see this, put
$\sigma_p=d(1/p-1)_+$. For a fixed compact set $K$ and a nonnegative
integer $k>\sigma_p-s$, Littlewood--Paley decomposition gives
\begin{equation*}
 |\langle F,\phi\rangle|
 \le C_K\|F\|_{B_{p,q}^s(\mathbb R^d)}\|\phi\|_{C^k},
 \qquad \phi\in C_c^k(K).
\end{equation*}
Indeed, the $j$th term is bounded by
$C_K2^{j\sigma_p-jk}\|\Delta_jF\|_p\|\phi\|_{C^k}$:
the enlarged frequency projection of $\phi$ has the
$L^{p'}$ or $L^\infty$ bound $C_K2^{-jk}\|\phi\|_{C^k}$. We apply
H\"older's inequality for $p\ge1$ and the band-limited Nikol'skii
inequality $\|\Delta_jF\|_1\lesssim2^{j\sigma_p}\|\Delta_jF\|_p$ for
$p<1$. Summation converges geometrically because
$\sup_j2^{js}\|\Delta_jF\|_p\lesssim\|F\|_{B_{p,q}^s}$.
By smooth approximation in $C^k$ within a compact subset of $\Omega$,
ambient Besov distributions agreeing on $\Omega$ therefore have the
same coefficients against wavelets supported in $Q$; compare the
lower diagram in~\cite[Section~2.4, equations (2.11)--(2.12)]{ref3}.

For $c\in b_1(\mathcal K)$, the distribution
$F=W^{-1}Q_{\mathcal K}c$ also belongs to the ambient target space by
boundedness of $J_{\mathcal K}$. The local coefficient identity applied
to $E_\Omega R_\Omega F$ and $F$ gives
\begin{equation*}
 C_2I_\Omega S_1=J_{\mathcal K}.
\end{equation*}
The ideal property and~\eqref{eq:105} now yield
\begin{equation*}
 a_N(J_{\mathcal K})\lesssim a_N(I_\Omega)
                         \lesssim a_N(J_{\mathcal H}).
\end{equation*}
The factorizations hold on the full coefficient spaces because their
wavelet expansions converge in distributions. By~\eqref{eq:103},
Proposition~\ref{result:6.1} compares both coefficient maps with the
geometric sequence model at the same $N$. Theorem~\ref{result:5.1} then
gives the asserted order with exponent~\eqref{eq:4}. A single interior
block of sufficiently large dimension gives positivity at each fixed
$N$, completing the estimate for all $N\ge1$.
\end{proof}

\begin{remark}\label{result:6.4}
Theorem~\ref{result:1.1} holds for arbitrary real $s_1,s_2$ satisfying the
stated difference condition and concerns approximation numbers of
isotropic, unweighted Besov embeddings. The different ordering of scale
and spatial summation in Triebel--Lizorkin norms requires further
arguments.
\end{remark}

\appendix
\numberwithin{equation}{section}

\section{Basic block estimates}
\label{app:tools}

This appendix proves the auxiliary estimates used in Section~\ref{sec:preliminaries}.
For finite matrices, we use the trace and Hilbert--Schmidt norms $S_1,S_2$ and the inequality
\begin{equation}\label{eq:8}
 |\tr T|\le\|T\|_{S_1}\le\sqrt{\rank T}\,\|T\|_{S_2}.
\end{equation}

\begin{proof}[Proof of Lemma~\ref{result:2.1}]
For $1\le p_1<2<p_2\le p_1'$, the finite-dimensional formula in
\cite[Lemma~3.2]{ref3} gives \eqref{eq:9} with $r=p_2$: for $n\le M/2$, its
factor $\sqrt{1-n/M}$ is bounded away from zero, and the other term in that formula is smaller than the
displayed minimum. If $p_2>p_1'$, apply the finite-dimensional adjoint identity
$a_n(\ell_{p_1}^M\hookrightarrow\ell_{p_2}^M)
=a_n(\ell_{p_2'}^M\hookrightarrow\ell_{p_1'}^M)$ and the preceding case.
For $p_1<1$, the target is Banach and every matrix $F$ satisfies
\begin{equation*}
 \|F:\ell_{p_1}^M\to\ell_{p_2}^M\|
 =\max_k\|Fe_k\|_{p_2}=\|F:\ell_1^M\to\ell_{p_2}^M\|.
\end{equation*}
Indeed, $\|x\|_{p_1}\le1$ implies $\sum_k|x_k|\le1$, and testing on the coordinate vectors
gives equality. Apply this identity to $F=I-R$ and take the infimum over $\rank R<n$.
\end{proof}

\begin{proof}[Proof of Lemma~\ref{result:2.2}]
Choose $j$ with $M_j\ge2N$. The ideal property and Lemma~\ref{result:2.1} give
$a_N(D)\ge t_ja_N(\ell_{p_1}^{M_j}\hookrightarrow\ell_{p_2}^{M_j})>0$.
\end{proof}

\begin{lemma}\label{result:2.3}
Let $T=\bigoplus_jT_j:(\bigoplus_jX_j)_{\ell^{q_1}}\to
(\bigoplus_jY_j)_{\ell^{q_2}}$, where $X_j$ are quasi-Banach and $Y_j$ are Banach.
For an infinite family, assume the standing completeness and uniform
quasi-triangle conditions. If $a_N(T_j)\ge d$ on $L$ selected blocks, then
\begin{equation}\label{eq:12}
 a_N(T)\ge c_{q_2}dL^{\chi_q}.
\end{equation}
\end{lemma}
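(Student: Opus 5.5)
The plan is to argue directly with an arbitrary competitor. Fix a bounded linear $A$ with $\rank A<N$, put $E=T-A$, and let $\mathcal S$ be the set of $L$ selected blocks. For $j\in\mathcal S$, let $\iota_j$ be the coordinate injection and $P_j$ the coordinate projection; both are contractions. The compression $P_jE\iota_j=T_j-P_jA\iota_j$ is the error of the approximant $P_jA\iota_j$, whose rank is less than $N$. Hence $\|P_jE\iota_j\|\ge a_N(T_j)\ge d$. I then choose $x_j\in X_j$ with $\|x_j\|\le1$ and $\|P_jE\iota_jx_j\|_{Y_j}\ge d/2$.

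\emph{Case $\chi_q=0$.} Here the bound follows from a single block: $\|E\|\ge\|P_jE\iota_j\|\ge d$. Taking the infimum over $A$ gives \eqref{eq:12} with $c_{q_2}=1$.

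\emph{Case $q_2<q_1$.} In particular $q_2<\infty$, and I use a random sign test vector
\[
 x=\sum_{j\in\mathcal S}\varepsilon_j\iota_jx_j,
\]
where the $\varepsilon_j$ are independent Rademacher signs. Then $\|x\|\le L^{1/q_1}$ in the source, including $q_1=\infty$. Fix a selected target block $k$ and condition on all signs except $\varepsilon_k$. The $k$th target component then takes the two values $b\pm a$, with $a=P_kE\iota_kx_k$ and $b\in Y_k$.

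The key step is the averaging inequality
\[
 \tfrac12\bigl(\|b+a\|^{q_2}+\|b-a\|^{q_2}\bigr)\ge c_{q_2}\|a\|^{q_2}.
\]
It rests on the triangle inequality $\|b+a\|+\|b-a\|\ge2\|a\|$ in the Banach space $Y_k$; this is the only place where the Banach assumption on the $Y_j$ enters. From there, use convexity of $u\mapsto u^{q_2}$ when $q_2\ge1$, or the estimate $u^{q}+v^{q}\ge(u+v)^{q}$ when $q_2<1$, exactly as in Step~1 of Lemma~\ref{result:3.4}. This gives $c_{q_2}=\min\{1,2^{q_2-1}\}$. Complex scalars are handled by the same signs, since the inequality does not use the scalar field.

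Summing over $k\in\mathcal S$, dropping the unselected target blocks (projection is contractive), and taking expectations gives
\[
 \mathbb E\|Ex\|^{q_2}\ge c_{q_2}L(d/2)^{q_2}.
\]
Some realization of the signs therefore attains this bound. For that realization,
\[
 \|E\|\ge\|Ex\|/\|x\|\gtrsim dL^{1/q_2-1/q_1}=dL^{\chi_q}.
\]
Taking the infimum over $A$ proves \eqref{eq:12}.

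I expect no serious obstacle. The points needing care are: the approximant may couple blocks, which is harmless because only diagonal compressions are used for the lower bounds on each block, while the off-diagonal contributions are absorbed into $b$ by the sign averaging; and the quasi-Banach source needs only the exact value of $\|x\|$ in the outer sum, never a triangle inequality.
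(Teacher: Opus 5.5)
Your proposal is correct and is essentially the paper's proof: random signs on the selected blocks, conditioning on all signs but one, and the triangle inequality in the Banach block $Y_k$ to absorb the cross-level terms of the approximant. The differences are cosmetic. The paper normalizes with coefficients $c_j\ge0$, $\|c\|_{q_1}=1$, $\|c\|_{q_2}=L^{\chi_q}$, using a single nonzero coefficient when $\chi_q=0$, which corresponds to your compression case. It also uses the cruder bound that one of the two signs gives norm at least $c_jd/2$, which yields the constant $2^{-q_2-1}$ in place of your $\min\{1,2^{q_2-1}\}$.
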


\begin{proof}
For $\rank A<N$, choose unit vectors $x_j$ in the selected source blocks with
$\|(T_j-A_{jj})x_j\|\ge d/2$, and coefficients $c_j\ge0$ such that
\begin{equation*}
 \|c\|_{q_1}=1,\qquad\|c\|_{q_2}=L^{\chi_q}.
\end{equation*}
Use constant coefficients if $q_2<q_1$, and one nonzero coefficient otherwise.
For independent signs, set $x=\sum_jc_j\varepsilon_j\iota_jx_j$, so $\|x\|\le1$.
Conditioning on all signs except $\varepsilon_j$, we write the $j$-th output as
$\varepsilon_jc_j(T_j-A_{jj})x_j+v_j$. The triangle inequality in $Y_j$ shows
that one sign gives norm at least $c_jd/2$. Thus for $q_2<\infty$,
\[
 \mathbb E\|(T-A)x\|^{q_2}\ge2^{-q_2-1}d^{q_2}\sum_jc_j^{q_2}.
\]
This expectation estimate also applies when $q_2<1$, since the triangle inequality was used only in a target block. If $q_2=\infty$, restrict to
one block. Taking the infimum over $A$ proves \eqref{eq:12}.
\end{proof}

\begin{proof}[Proof of Proposition~\ref{result:2.7}]
For large $N$, choose $J$ maximal with $S_J=\sum_{j\le J}M_j\le N/4$. Then
\begin{equation*}
 2^J\asymp N.
\end{equation*}
Put $L=\lceil(r/2)\log_2(e+N)\rceil$ and $n=\lfloor N/(4L)\rfloor$.
Reproduce the blocks $j\le J$ exactly, approximate each of the next $L$ blocks
with rank less than $n$ by Lemma~\ref{result:2.1}, and set the approximant equal
to zero on the remaining blocks. Since $n\asymp N/L$ and $n\le M_j/2$ on this window, the errors
on these blocks are $\lesssim t_jM_j^{1/r}n^{-1/2}\lesssim n^{-1/2}$, while
\begin{equation*}
 \rank A<S_J+Ln\le N/2<N.
\end{equation*}
The scalar multiplier formula in Section~\ref{sec:preliminaries} gives
\[
 \|D-A\|\lesssim L^{\chi_q}n^{-1/2}+2^{-(J+L)/r}
 \lesssim N^{-1/2}\Lambda_N^{\chi_q+1/2}.
\]
For the lower estimate, take
\begin{equation*}
 \mathcal J_N=\{j:4N\le M_j\le N^{r/2}\}.
\end{equation*}
There are $\asymp\Lambda_N$ such levels. Lemma~\ref{result:2.1} gives
$a_N(t_jI_{M_j})\gtrsim N^{-1/2}$ on each of these levels. The dimension factors
cancel in the second term of the minimum, while $M_j\le N^{r/2}$ gives the same lower bound for the first.
Lemma~\ref{result:2.3} proves the lower bound. Positivity and boundedness of $D$
extend the estimates to the finitely many remaining indices.
\end{proof}

\begin{proof}[Proof of Lemma~\ref{result:trace-window}]
Choose a fixed stride $h$ and $Q>1$ so that, after finitely many levels,
\begin{equation*}
 3^rM_j\le M_{j+h}\le QM_j.
\end{equation*}
Choose $c>0$ with
\begin{equation*}
 c\log Q<(r-2)/4,
\end{equation*}
set $L=\lfloor c\Lambda_N\rfloor$, and take $j_1$ to be the first level on this
subsequence with $M_{j_1}\ge C_*N/L^2$. This gives \eqref{eq:37}, and
\begin{equation*}
 m_L\le Q^{L-1}m_1\lesssim N^{r/2-\eta}L^{-2}
\end{equation*}
for some $\eta>0$. The polynomial gain absorbs every fixed power of $L$;
therefore $m_L\lesssim N^{r/2}L^{-r\Gamma}$, proving \eqref{eq:39}.

For the trace assertion, fix $\rank A<N$ and $E=D_L-A$, $\varepsilon=\|E\|$.
If $\varepsilon\ge t_L/2$, use \eqref{eq:39}. Otherwise each source coordinate
vector has norm one and each target coordinate functional has norm one, so
$|E_{ii}|\le\varepsilon<t_s/2$ in block $s$. For the set in
\eqref{eq:trace-input}, put $R_S=B_SD_{SS}C_S$ and $T_S=B_SA_{SS}C_S$. Then
\begin{equation*}
 R_S=\bigoplus_{s\in S}m_s^{-1}I_{m_s},\quad
 \|R_S\|_{S_2}^2=\sum_{s\in S}m_s^{-1},\quad
 \operatorname{Re}\tr T_S\ge|S|/2,\quad\rank T_S<N.
\end{equation*}
By \eqref{eq:8}, \eqref{eq:37}, and \eqref{eq:trace-input},
\[
 |S|/2\le\sqrt N\|T_S\|_{S_2}
 \lesssim L/\sqrt{C_*}+K\sqrt N\,\varepsilon L^{1-\Gamma}.
\]
Choose $C_*$ large enough to absorb the first term. Since $|S|\ge7L/8$,
\begin{equation*}
 \varepsilon\gtrsim N^{-1/2}L^\Gamma.
\end{equation*}
Taking the infimum over the approximants proves the assertion. Compressing to the selected levels also gives
\begin{equation*}
 a_N(D)\ge a_N(D_L).\qedhere
\end{equation*}
\end{proof}

\subsection{Column estimates}

\begin{lemma}\label{result:3.1}
Let $0<s\le\vartheta\le1$, let $X_j$ satisfy the standing assumptions, and let
$Y$ be complete with $\|y+z\|_Y^\vartheta\le\|y\|_Y^\vartheta+\|z\|_Y^\vartheta$.
If bounded operators $T_s:(\bigoplus_jX_j)_{\ell^s}\to Y$ and
$T_\vartheta:(\bigoplus_jX_j)_{\ell^\vartheta}\to Y$ agree on finite sequences,
then
\begin{equation*}
 a_N(T_s)=a_N(T_\vartheta).
\end{equation*}
\end{lemma}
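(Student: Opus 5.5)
The plan is to prove $\|T_s\|=\|T_\vartheta\|$ for every operator of this kind, and then to show that bounded finite-rank operators correspond bijectively between the two sources, with matching ranks. The key inequality is the $\vartheta$-triangle inequality in $Y$. For a finite sequence $x=\sum_j\iota_jx_j$ it gives
\[
 \|T_\vartheta x\|_Y^\vartheta\le\sum_j\|T\iota_j\|^\vartheta\|x_j\|^\vartheta
 \le\sup_j\|T\iota_j\|^\vartheta\sum_j\|x_j\|^\vartheta .
\]
Hence the norm on finite sequences from the $\ell^\vartheta$-sum equals $\sup_j\|T\iota_j\|$; the reverse bound comes from testing on single blocks. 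Since $s\le\vartheta$, we have $\|x\|_{\ell^s}\ge\|x\|_{\ell^\vartheta}$, so the same computation gives the same supremum for the $\ell^s$-source. Both norms therefore equal $\sup_j\|T\iota_j\|$, which is the analogue of Lemma~\ref{result:4.4} with $1$ replaced by $\vartheta$.

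To pass from finite sequences to the whole spaces, I will first check density. For $s<\infty$ and $\vartheta<\infty$, finite sequences are dense in both outer sums, because the tails of an $\ell^s$ or $\ell^\vartheta$ series tend to zero. Both operators are bounded, so each is determined by its values on finite sequences. The inclusion $(\bigoplus X_j)_{\ell^s}\hookrightarrow(\bigoplus X_j)_{\ell^\vartheta}$ is contractive and $T_s=T_\vartheta|$ on the smaller space. Given $A_\vartheta$ of rank $<N$, its restriction $A_s$ has rank $\le\operatorname{rank}A_\vartheta$. Applying the norm identity to $T_\vartheta-A_\vartheta$ gives $\|T_s-A_s\|=\|T_\vartheta-A_\vartheta\|$, and therefore $a_N(T_s)\le a_N(T_\vartheta)$.

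For the converse, take a bounded $A_s$ of rank $<N$ on the $\ell^s$-sum. I will extend it to the $\ell^\vartheta$-sum by the series $x\mapsto\sum_jA_s\iota_jx_j$. The norm identity shows the partial sums are Cauchy in the $\vartheta$-normed space $Y$. Concretely, for finite tails the bound $\|\sum_{j\in F}A_s\iota_jx_j\|^\vartheta\le\sup_j\|A_s\iota_j\|^\vartheta\sum_{j\in F}\|x_j\|^\vartheta$ holds. The series converges because $Y$ is complete, and its values lie in the closed finite-dimensional range, so the extension has the same rank. Applying the first step to $T_\vartheta-\tilde A$ on finite sequences and extending by density then gives $\|T_\vartheta-\tilde A\|=\|T_s-A_s\|$, which yields the reverse inequality.

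I expect the main obstacle to be rigor about which space the approximants live in, together with infinite indices and completeness. Here $s,\vartheta\le1$ are finite, so density is available. The only genuinely nonlinear point is continuity of the quasi-norm, and that follows from the $\vartheta$-triangle inequality.
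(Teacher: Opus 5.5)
Your proposal is correct and follows essentially the paper's argument. The contractive inclusion gives $a_N(T_s)\le a_N(T_\vartheta)$, and the $\vartheta$-triangle inequality in $Y$, completeness, density of finite sequences and closedness of finite-dimensional ranges give the reverse inequality. The only cosmetic difference is that you extend the approximant $A_s$ by the series $\sum_jA_s\iota_jx_j$, whereas the paper extends the error $E=T_s-A$ to $\widetilde E$ and takes $T_\vartheta-\widetilde E$ as the approximant; by density these are the same operator.
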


\begin{proof}
The contractive inclusion gives $a_N(T_s)\le a_N(T_\vartheta)$. For the reverse inequality, take
$\rank A<N$ on the $\ell^s$-sum and put $E=T_s-A$ and $\varepsilon=\|E\|$.
The series $\widetilde Ex=\sum_jE\iota_jx_j$ converges in $Y$ on the
$\ell^\vartheta$-sum and satisfies
$\|\widetilde Ex\|_Y^\vartheta\le\varepsilon^\vartheta\sum_j\|x_j\|^\vartheta$.
On finite sequences, $T_\vartheta-\widetilde E=A$. By density and the closedness of the
finite-dimensional range of $A$, the operator $T_\vartheta-\widetilde E$ has rank
less than $N$. Thus $a_N(T_\vartheta)\le\varepsilon$.
\end{proof}

\begin{lemma}\label{result:3.2}
Let $0<q\le1$, let $\Omega_s$ be nonempty finite sets, and let
$v_{\ell s,\omega}\in H_\ell$, where $H_\ell$ are real or complex Hilbert spaces.
Put
\begin{equation}\label{eq:46}
 \mathfrak M_q=\max_{\omega_s\in\Omega_s,\ \varepsilon_s\in\{-1,1\}}
 \left(\sum_{\ell=1}^L\left\|\sum_s\varepsilon_sv_{\ell s,\omega_s}\right\|^q\right)^{1/q}.
\end{equation}
There is a set $S$ of indices, used for both rows and columns, with $|S|\ge7L/8$ such that
\begin{equation}\label{eq:47}
 \sum_{\ell,s\in S}\frac1{|\Omega_s|}\sum_{\omega\in\Omega_s}
 \|v_{\ell s,\omega}\|^2\le C_qL^{1-2/q}\mathfrak M_q^2.
\end{equation}
\end{lemma}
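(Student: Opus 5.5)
This is the column analogue of Lemma~\ref{result:3.4}, with the roles of rows and columns exchanged. The vectors $v_{\ell s,\omega}$ now live in the target Hilbert spaces $H_\ell$, and the source is parametrized by a choice $\omega_s\in\Omega_s$ together with a sign $\varepsilon_s$. We therefore plan to repeat the three-part scheme used there: a threshold deletion, a convex choice of weights on the averaged quantities, and a random test-vector argument that bounds the optimal value by $\mathfrak M_q$.

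\textbf{Threshold deletion.} Put $g_{\ell s}=\max_\omega\|v_{\ell s,\omega}\|$. For a bipartite matching $\mathcal Q$, select the $\omega_s$ attaining $g_{\ell s}$ on the matched pairs. For every other column $s$, choose $\omega_s$ arbitrarily. Then randomize the signs $\varepsilon_s$. Conditioning on all signs except that of the matched column, the output of the matched row is $b\pm a$ with $\|a\|=g_{\ell s}$. The same two-point inequality as in Step~1 of Lemma~\ref{result:3.4} then gives $\sum_{\mathcal Q}g_{\ell s}^q\le 2^{1-q}\mathfrak M_q^q$. For $q=1$ the constant is $1$, and the argument is unchanged. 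Set the threshold $\tau\asymp(\epsilon L)^{-1/q}\mathfrak M_q$. By the matching--cover theorem we delete fewer than $\epsilon L$ labels from both sides, leaving $n\ge(1-\epsilon)L$ labels on which every vector has norm at most $\tau$, with $n\tau^q\lesssim\mathfrak M_q^q$. Restriction does not increase $\mathfrak M_q$.

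\textbf{Weights and test vectors.} Here the averaging is over the column parameter, so the convex program must be set up transposed. For each column $s$ take a probability $W_s$ on $\Omega_s$, and put $b_{\ell s}(W)=\sum_\omega W_{s,\omega}\|v_{\ell s,\omega}\|^2$. Take row weights $\alpha_\ell$ with $\sum\alpha_\ell^{-\sigma}\le1$, $\sigma=q/(1-q)$ for $q<1$. Then minimize over $\alpha$ the quantity
\[
\Psi(\alpha)=\max_W\sum_s\Bigl(\sum_\ell\alpha_\ell^2b_{\ell s}(W)+\tau^2a^2\Bigr)^{1/2}.
\]
The optimality conditions, the uniqueness of $(\mu_s)$ and Danskin's rule go through verbatim. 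They give $\beta=\beta^{\mathrm{real}}+\beta^{\mathrm{reg}}$ with $\sum\beta_\ell^q=F^q$, where $F=\Psi(\alpha)$. The regularization part is bounded exactly as before.

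The real part needs a different test vector. The source now admits only one atom $\omega_s$ and one sign per column, and no Gaussian combinations. We therefore sample $\omega_s\sim W^0_s$ independently and take independent signs $\varepsilon_s$. For a fixed row $\ell$ we must show
\[
\mathbb E\Bigl\|\sum_s\varepsilon_sv_{\ell s,\omega_s}\Bigr\|^q\gtrsim\Bigl(\sum_s\text{(suitably weighted) }\|v_{\ell s,\omega_s}\|^2\Bigr)^{q/2}.
\]
Since $H_\ell$ is Hilbert, Khintchine--Kahane with exponent $q$ gives
\[
\mathbb E_\varepsilon\Bigl\|\sum_s\varepsilon_sv_s\Bigr\|^q\asymp_q\Bigl(\sum_s\|v_s\|^2\Bigr)^{q/2}.
\]
The weights $1/h_s$ in $\beta^{\mathrm{real}}$ must be matched to this square function. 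The plan is to apply the Rademacher sum to the columns with scalar multipliers $c_s=\alpha_\ell/h_s$ inserted. Because the multipliers must be at most one in absolute value, we first normalize by $\max_s$ and use the contraction principle; alternatively, we use Hölder in the form $\sum_s Q_{\ell s}/h_s\le(\sum_s Q_{\ell s})^{1/2}(\sum_s Q_{\ell s}/h_s^2)^{1/2}$ together with $\sum_s h_s=F$. This is the step I expect to be the main obstacle. The equalization \eqref{eq:69} on the support of $W^0$ and the concentration bound $\mathbb E_{-\ell}\le h_s^2$ must again be used to convert $\beta^{\mathrm{real}}_\ell$ into an $\ell_2$ quantity controlled by a random row output. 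This yields $\sum_\ell(\beta^{\mathrm{real}}_\ell)^q\lesssim\mathfrak M_q^q$, and hence $F\lesssim\mathfrak M_q$. The case $q=1$ is handled by taking $\sigma$ large, or by a limiting form of the constraint with $\max_\ell\alpha_\ell^{-1}$.

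\textbf{Final deletions.} Evaluating at the uniform $W$ gives $\sum_s(\sum_\ell\alpha_\ell^2b_{\ell s}(U))^{1/2}\le F$. We then delete the $\lfloor\epsilon n\rfloor$ rows with the largest $\rho_\ell=\alpha_\ell^{-1}$, and next the $\lfloor\epsilon n\rfloor$ columns with the largest $u_s$, exactly as in Step~4 of Lemma~\ref{result:3.4}. This yields a set $S$ with $|S|\ge(1-3\epsilon)L\ge7L/8$ for $\epsilon=1/24$, and the Cauchy--Schwarz rearrangement gives \eqref{eq:47} with constant $C_q$.
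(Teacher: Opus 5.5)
The gap is the middle step, and it carries the whole content of the lemma. After your threshold deletion, everything in your scheme rests on $F\lesssim\mathfrak M_q$, i.e.\ $\sum_\ell(\beta^{\mathrm{real}}_\ell)^q\lesssim\mathfrak M_q^q$, and you leave exactly this open. It does not follow by transposing Step~3 of Lemma~\ref{result:3.4}. There, the normalized Gaussian source vector $z_s=\sum_\ell\alpha_\ell g_\ell v_{\ell s,\nu_\ell}$ makes the output of row $\ell$ of order $g_\ell\,\alpha_\ell\sum_s Q_{\ell s,\nu_\ell}/h_s$, which is why $\beta^{\mathrm{real}}_\ell$ is the relevant quantity. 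Here the inputs are one signed atom per column. By Khintchine in each $H_\ell$, all they control is $\sum_\ell\mathbb E\bigl(\sum_s\|v_{\ell s,\omega_s}\|^2\bigr)^{q/2}$, and inserting multipliers of modulus at most one through the contraction principle only shrinks this square function.

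Neither repair you mention bridges the gap to the linear, $h_s$-weighted quantity $\beta^{\mathrm{real}}_\ell$. Normalizing the multipliers $1/h_s$ by their maximum loses the uncontrolled factor $(\min_s h_s)^{-1}$ and still yields a square function. The Cauchy--Schwarz bound with $\sum_s h_s=F$ estimates $F$ by itself, with no absorption mechanism. The case $q=1$ is also left to an unspecified limit.

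What you are missing is that in the column setting no weights are needed. Random signs serve all rows simultaneously, so the only real difficulty is the concavity of $t\mapsto t^{q/2}$: a bound on $\mathbb E X_\ell^{q/2}$ does not bound $\mathbb E X_\ell$. The threshold resolves this.
\begin{itemize}
\item Choose the $\omega_s$ independently and uniformly, and put $X_\ell=\sum_{s\in S_0}\|v_{\ell s,\omega_s}\|^2$, $\mu_\ell=\mathbb EX_\ell$. Then $\mu_\ell$ dominates the row-$\ell$ part of \eqref{eq:47}, and Khintchine gives $\sum_\ell\mathbb EX_\ell^{q/2}\lesssim_q\mathfrak M_q^q$.
\item Independence and $\|v\|\le\tau$ on $S_0\times S_0$ give $\mathbb EX_\ell^2\le\mu_\ell^2+\tau^2\mu_\ell$. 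H\"older between the moments of orders $q/2$, $1$, $2$ then gives $\mathbb EX_\ell^{q/2}\ge\mu_\ell/(\mu_\ell+\tau^2)^{1-q/2}$.
\item Delete the $\lfloor L/16\rfloor$ labels where this quantity is largest. Each remaining value is $\lesssim\mathfrak M_q^q/L\asymp\tau^q$, hence $\mu_\ell\lesssim_q\tau^2$.
\item The left side of \eqref{eq:47} is then at most $\sum_{\ell\in S}\mu_\ell\lesssim L\tau^2\asymp L^{1-2/q}\mathfrak M_q^2$, for every $0<q\le1$ including $q=1$.
\end{itemize}

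One small correction to your first step. Because $\mathfrak M_q$ is a maximum over signed atoms, deleting columns can increase it. For example, take $H_\ell=\mathbb R$, single atoms, and rows $(1,0,1)$, $(1,0,-1)$, $(0,0,0)$; deleting label $3$ raises $\mathfrak M_q$ from $2$ to $2^{1/q}$. Averaging over the deleted signs with $\|u+V\|^q+\|u-V\|^q\ge2^q\|u\|^q$ bounds the increase by $2^{1/q-1}$, which is harmless.
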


\begin{proof}
Set $g_{\ell s}=\max_\omega\|v_{\ell s,\omega}\|$. Conditioning on all signs except the one attached to a matched
source index and applying the triangle inequality shows that every bipartite matching
$\mathcal Q$ satisfies
\begin{equation}\label{eq:48}
 \sum_{(\ell,s)\in\mathcal Q}g_{\ell s}^q\le2\mathfrak M_q^q.
\end{equation}
If $\mathfrak M_q=0$, the conclusion holds. Otherwise, set
\begin{equation*}
 \tau=(128\mathfrak M_q^q/L)^{1/q}.
\end{equation*}
The graph $g_{\ell s}>\tau$ has matching number less than $L/64$.
Apply K\"onig's matching--cover theorem. From both the row and column index sets, delete all labels in the
resulting vertex cover. The remaining set satisfies
\begin{equation*}
 |S_0|>63L/64,\qquad g_{\ell s}\le\tau\quad(\ell,s\in S_0).
\end{equation*}
The functional \eqref{eq:46} restricted to $S_0\times S_0$ is at most
$2^{1/q-1}\mathfrak M_q$. Indeed, the signed sum $V$ over the omitted sources is symmetric, and
$\|u+V\|^q+\|u-V\|^q\ge2^q\|u\|^q$; averaging and summing over the retained targets gives this bound.

Choose the $\omega_s$ independently and uniformly. Put
$X_\ell=\sum_{s\in S_0}\|v_{\ell s,\omega_s}\|^2$, $\mu_\ell=\mathbb EX_\ell$,
and $a=q/2$. The Hilbert-space lower Khintchine inequality gives
\begin{equation*}
 \sum_{\ell\in S_0}\mathbb EX_\ell^a\le C_q\mathfrak M_q^q.
\end{equation*}
Independence and the bound $\|v\|\le\tau$ give
\begin{equation*}
 \mathbb EX_\ell^2\le\mu_\ell^2+\tau^2\mu_\ell.
\end{equation*}
Interpolating the moments of orders $a,1,2$ yields
\begin{equation*}
 \mathbb EX_\ell^a\ge h_\ell:=
 \frac{\mu_\ell}{(\mu_\ell+\tau^2)^{1-a}},\qquad
 \sum_{\ell\in S_0}h_\ell\le C_q\mathfrak M_q^q.
\end{equation*}
For $L\ge16$, delete the labels corresponding to the $\lfloor L/16\rfloor$ largest nonzero $h_\ell$ (all such labels if
there are fewer), leaving $S$ with $|S|\ge7L/8$ and
\begin{equation*}
 \max_{\ell\in S}h_\ell\le C_q\mathfrak M_q^q/L\lesssim_q\tau^q.
\end{equation*}
Since $y/(1+y)^{1-q/2}$ increases from zero to infinity, this implies
$\mu_\ell\lesssim_q\tau^2$ on $S$. Therefore the sum in \eqref{eq:47} is
at most $\sum_{\ell\in S}\mu_\ell\lesssim_qL\tau^2$, as required.
For $L<16$, take all labels and apply \eqref{eq:48} to single edges.
\end{proof}

\begin{proof}[Proof of Proposition~\ref{result:3.3}]
We first establish the weighted error estimate on a finite window. Let
$0<p\le r'$, $0<q\le1$, $q\le u\le\infty$, and
\begin{equation*}
 0<c_0\le d_s:=t_sm_s^{1/r}\le C_0.
\end{equation*}
For $E:(\bigoplus_s\ell_p^{m_s})_{\ell^u}\to
(\bigoplus_\ell\ell_r^{m_\ell})_{\ell^q}$ put
\begin{equation}\label{eq:56}
 B=\bigoplus_\ell m_\ell^{1/r-1/2}I_{m_\ell},\qquad
 C=\bigoplus_s d_s^{-1}m_s^{-1/2}I_{m_s}.
\end{equation}
Take $v_{\ell s,k}=m_\ell^{1/r-1/2}d_s^{-1}E_{\ell s}e_k\in\ell_2^{m_\ell}$,
so
\begin{equation*}
 \|B_SE_{SS}C_S\|_{S_2}^2
 =\sum_{\ell,s\in S}\frac1{m_s}\sum_{k=1}^{m_s}\|v_{\ell s,k}\|_2^2.
\end{equation*}
The vector with blocks $x_s=d_s^{-1}\varepsilon_se_{k_s}$ has norm at most
$c_0^{-1}L^{1/u}$. Since $\|m_\ell^{1/r-1/2}y\|_2\le\|y\|_r$,
\begin{equation*}
 \mathfrak M_q\le c_0^{-1}L^{1/u}\|E\|.
\end{equation*}
Lemma~\ref{result:3.2} gives a common set satisfying $|S|\ge7L/8$ and
\begin{equation}\label{eq:57}
 \|B_SE_{SS}C_S\|_{S_2}\le C_{r,q,c_0}\|E\|L^{1/2-1/q+1/u}.
\end{equation}
This estimate applies to error operators of arbitrary rank, including operators with entries between different blocks.

If $q_1\ge q_2$, apply Lemma~\ref{result:trace-window} with
$\Gamma=1/2+1/q_2-1/q_1$, $u=q_1$, $q=q_2$, and the weights \eqref{eq:56}.
They satisfy $b_sc_st_s=1/m_s$, and \eqref{eq:57} is
\eqref{eq:trace-input}. This proves the lower bound, including the finite-window
statement \eqref{eq:column-window} when $u=\infty$. The upper bound follows from
Proposition~\ref{result:2.7}. If $q_1<q_2\le1$, apply Lemma~\ref{result:3.1}
with $s=q_1$, $\vartheta=q_2$ to obtain
$a_N(D_{q_1,q_2})=a_N(D_{q_2,q_2})$; the target has the required
$q_2$-power triangle inequality. The equal-index case gives exponent $1/2$.
Positivity and boundedness extend the estimates to the finitely many small $N$.
\end{proof}

\subsection{Equal conjugate inner indices}

\begin{proof}[Proof of Proposition~\ref{result:2.8}]
The upper bound follows from Proposition~\ref{result:2.7}. For $q_1\le q_2$, take the
window of Lemma~\ref{result:trace-window} with $\Gamma=1/2$ and set
$\lambda_s=(m_st_s)^{-1/2}$, $\Lambda=\bigoplus_s\lambda_sI_{m_s}$.
For any matrix $E$ with block norms
$e_{\ell s}=\|E_{\ell s}:\ell_{r'}^{m_s}\to\ell_r^{m_\ell}\|$, coordinate
injections and projections give $e_{\ell s}\le\|E\|$. Applying the Euclidean column
bound to $E_{\ell s}$ and its adjoint gives
\begin{align*}
 \|E_{\ell s}\|_{S_2}^2&\le m_sm_\ell^{1-2/r}e_{\ell s}^2,
 \\
 \|E_{\ell s}\|_{S_2}^2&\le m_\ell m_s^{1-2/r}e_{\ell s}^2.
\end{align*}
Thus $t_s\asymp m_s^{-1/r}$ and geometric separation imply
\begin{equation*}
 \|\lambda_\ell E_{\ell s}\lambda_s\|_{S_2}^2
 \lesssim\left(\frac{\min(m_\ell,m_s)}{\max(m_\ell,m_s)}\right)^{1/r}e_{\ell s}^2
 \lesssim3^{-|\ell-s|}\|E\|^2.
\end{equation*}
Summing over the matrix blocks gives
\begin{equation*}
 \|\Lambda E\Lambda\|_{S_2}\lesssim L^{1/2}\|E\|.
\end{equation*}
Apply the trace lemma with $S$ the entire window and $B=C=\Lambda$ to obtain
$a_N(D)\gtrsim N^{-1/2}\Lambda_N^{1/2}$.

For $q_1\ge q_2$, put $q_* =\min\{1/2,q_2\}$, taking $q_*=1/2$ at
$q_2=\infty$. Proposition~\ref{result:3.3} supplies a window of
$L\asymp\Lambda_N$ levels on which
$a_N(D_{\infty,q_*}^{\mathcal W_N})\gtrsim N^{-1/2}L^{1/2+1/q_*}$.
On that same window the inclusions $J:\ell^\infty\to\ell^{q_1}$ and
$K:\ell^{q_2}\to\ell^{q_*}$, with the inner indices fixed, satisfy
\[
 D_{\infty,q_*}^{\mathcal W_N}=K D_{q_1,q_2}^{\mathcal W_N}J,\qquad
 \|J\|=L^{1/q_1},\quad\|K\|=L^{1/q_*-1/q_2}.
\]
The ideal property therefore gives
$a_N(D)\gtrsim N^{-1/2}L^{1/2+1/q_2-1/q_1}$, as required.
The approximating matrices in this finite-window argument may have entries between different levels. Positivity
extends the lower bound to the finitely many small indices.
\end{proof}

\section{Lower bounds from Malykhin's finite-window estimate}
\label{app:malykhin}

We derive the lower estimates for the two strict orderings of the inner indices from
Malykhin's argument~\cite[proof of Theorem~1, especially
equation~(13) in the author version]{ref13}. We obtain the approximation-number
bounds by comparing norms, taking convex combinations in the source and using finite-dimensional
adjoints. The argument covers quasi-Banach indices and both real and complex scalars.

\begin{proposition}\label{app:malykhin:consequences}
Let $D_{q_1,q_2}$ be the critical map~\eqref{eq:7}. For
$1\le q_1,q_2\le\infty$,
\[
 a_N(D_{q_1,q_2})\gtrsim N^{-1/2}\Lambda_N^{B_t(q_1,q_2)}
 \quad\text{if }p_2=r\le p_1',
\]
and
\[
 a_N(D_{q_1,q_2})\gtrsim N^{-1/2}\Lambda_N^{B_s(q_1,q_2)}
 \quad\text{if }p_1'=r\le p_2,
\]
where
\begin{equation}
 B_t(u,v)=\max\{1/2,1-1/u\}-(1/2-1/v)_+,
 \label{app:malykhin:target-exponent}
\end{equation}
\begin{equation}
 B_s(u,v)=\max\{1/2,1/v\}-(1/u-1/2)_+.
 \label{app:malykhin:source-exponent}
\end{equation}
For $0<q_1<1\le q_2$, the same estimates hold with $q_1$ replaced
by $1$. For every admissible inner pair and
$0<q_1\le q_2\le1$,
\[
 a_N(D_{q_1,q_2})\gtrsim N^{-1/2}\Lambda_N^{1/2}.
\]
These lower bounds first hold on finite windows and then pass
to the full coefficient spaces.
\end{proposition}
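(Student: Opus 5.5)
The plan is to reduce to a finite-window statement that is essentially Malykhin's inequality, and then move the inner and outer indices using norm comparisons, convex hulls and adjoints.

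\textbf{Core case: source-side ordering $p_1'=r\le p_2$.} By the contractive inclusion $\ell_{p_2}\hookrightarrow\ell_\infty$ in each target block and the ideal property, it suffices to take inner target index $\infty$. By Lemma~\ref{result:4.4}, taking $p_1=r'$ changes nothing when $p_1\le1$; for $1<p_1$ we already have $p_1=r'$.

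\textbf{Window.} Take a window of $L\asymp\Lambda_N$ geometrically separated levels, as in Lemma~\ref{result:trace-window}, with $m_1\asymp N/L^2$ and $t_s\asymp m_s^{-1/r}$. On this window $D$ is a weighted sum of identities $\ell_{r'}^{m_s}\to\ell_\infty^{m_s}$.

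\textbf{Malykhin's estimate.} The key input is his equation~(13): for an approximant of rank $<N$, it bounds the averaged trace pairing of the error against the Gaussian/Rademacher test elements from below. I would transcribe it so that the dependence on the outer indices appears as two factors.
\begin{itemize}
\item On the target side, the pairing is tested against an $\ell^{v'}$-combination of block functionals.
\item On the source side, the test vectors are $\ell^u$-normalized sums of random sign vectors across the blocks.
\end{itemize}
The exponent $\max\{1/2,1/v\}$ should come from choosing either all $L$ target blocks at once or a single averaged block. The loss $(1/u-1/2)_+$ should come from the source normalization of the sign sums: an $\ell^u$ norm of $L$ unit blocks compared with their Euclidean size $L^{1/2}$, which is where Khintchine enters. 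I would verify both regimes by two separate test constructions and take the better of them.

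\textbf{Target-side ordering $p_2=r\le p_1'$.} Here I pass to finite-dimensional adjoints. On a finite window, first replace $p_1,q_1$ below one by $1$ using Lemma~\ref{result:4.4}. Then $a_N(T)=a_N(T^*)$, and the adjoint is a map of the source-ordering type, with inner indices $r',\bar p_1'$ and outer indices $(q_2',q_1')$. Substituting $u=q_2'$ and $v=q_1'$ into $B_s$ gives
\[
\max\{1/2,1-1/q_1\}-(1/2-1/q_2)_+=B_t(q_1,q_2),
\]
which is the stated target exponent.

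\textbf{Quasi-Banach cases and passage to the full spaces.}
\begin{itemize}
\item For $0<q_1<1\le q_2$, the target is Banach, so Lemma~\ref{result:4.4} replaces $q_1$ by $1$.
\item For $q_1\le q_2\le1$, Lemma~\ref{result:3.1} reduces to $q_1=q_2$. The $N^{-1/2}\Lambda_N^{1/2}$ lower bound then follows from Lemma~\ref{result:2.3} with $\chi_q=0$ only if that lemma gives the extra half power; it does not, so instead I would use the column-window estimate of Proposition~\ref{result:3.3} (for $p_2\le p_1'$) or Theorem~\ref{result:3.6} (for $p_1'<p_2$), combined with the inclusions $J,K$, exactly as in Proposition~\ref{result:3.7}.
\item Finite compression, together with the truncation argument of Section~\ref{sec:infinite-indices}, transfers the window bounds to the full coefficient spaces. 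Lemma~\ref{result:2.2} handles small $N$.
\end{itemize}

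\textbf{Main obstacle.} The hardest step is extracting from Malykhin's proof a finite-window inequality that is valid for approximation numbers rather than Kolmogorov widths. It must also hold for arbitrary approximants coupling different levels, and carry the precise source-side and target-side outer-index factors. My first attempt would be to rewrite his (13) as a rank--trace inequality in the style of Lemma~\ref{result:trace-window}, with the test pairing bounded by $\|E\|$ times the norms of the convex hulls of the source test sets.
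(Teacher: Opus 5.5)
Your surrounding bookkeeping is sound. The adjoint computation $B_s(q_2',q_1')=B_t(q_1,q_2)$, the use of Lemma~\ref{result:4.4} for $q_1<1\le q_2$, and the passage to the full spaces by compression all match the paper. Your treatment of $0<q_1\le q_2\le1$, through Lemma~\ref{result:3.1} and Proposition~\ref{result:3.7} at $q_1=q_2$, is a valid though heavier substitute for the paper's step: there the target is included in $\ell^1$ and Lemma~\ref{result:4.4} reduces to $(q_1,q_2)=(1,1)$.

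The gap is that the core finite-window lower bound, which carries all the content, is never proved. The two exponents are only said to ``come from'' test constructions you do not specify. The obstacle you single out also does not exist. Any approximant of rank $<N$ has range of dimension at most $N-1$, so $a_N(T)\ge d_{N-1}(T(B_X),Y)\ge d_N(T(B_X),Y)$, whatever entries it has between levels. A Kolmogorov-width bound from Malykhin therefore transfers directly, and no rank--trace reformulation of his estimate is needed.

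What is missing is the width estimate itself and the norm comparison that brings in the target fine index. Malykhin's estimate concerns products of octahedra, i.e.\ source inner index $1$, measured in a finite $L_p$-type norm; this is the target-side ordering. Your chosen core case $\ell_{r'}\to\ell_\infty$ is its dual, so you would need an adjoint before his result applies at all.

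The paper's argument runs as follows.
\begin{itemize}
\item It works on the window $\mathcal W_N=\{j:4N<2^j<N^{\widetilde p/2}\}$ with $2<\widetilde p<p$, rather than the window of Lemma~\ref{result:trace-window}. It uses source balls $K_\theta$ with inner $\ell_1$ and the norm $\|x\|_Z=\max\{(\sum_j2^{-j}\|x_j\|_p^p)^{1/p},(\sum_j2^{-j}\|x_j\|_2^2)^{1/2}\}$.
\item Malykhin's product-of-octahedra theorem gives $d_N(K_\infty,Z)\gtrsim L/\sqrt N$. The competing term $2^{-\max\mathcal W_N/p}/2$ is larger because $\widetilde p<p$.
\item A single octahedron gives $d_N(K_1,Z)\gtrsim\sqrt{L/N}$. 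The inclusions $K_1\subset K_\theta$ and $L^{-1/\theta}K_\infty\subset K_\theta$ then yield the exponent $\max\{1/2,1-1/\theta\}$ in the source fine index.
\item The target fine index $s$ enters only through $\|x\|_Z\le\|a\|_2\le L^{(1/2-1/s)_+}\|x\|_{Y_s}$, where $a_j=2^{-j/p}\|x_j\|_p$. This is H\"older's inequality, not Khintchine's.
\end{itemize}
These steps give $B_t$ for inner indices $1,p$. The inclusion $\ell_1\hookrightarrow\ell_{p_1}$, Lemma~\ref{result:4.4} and finite-dimensional adjoints then give the remaining cases. Without them your proposal is a correct outline of the reductions around the estimate, not a proof of it.
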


\begin{proof}
We first consider the inner indices $1,p$, where $2<p<\infty$, and block
dimensions $2^j$. Fix $2<\widetilde p<p$ and, for large $N$, set
\begin{equation*}
 \mathcal W_N=\{j:4N<2^j<N^{\widetilde p/2}\},\qquad
 L=|\mathcal W_N|\asymp\Lambda_N.
\end{equation*}
On the real coordinates of this window, define
\begin{align*}
 K_\theta&=\{x:\|(\|x_j\|_1)_j\|_\theta\le1\},
       &&1\le\theta\le\infty,\\
 \|x\|_{Y_s}&=\|(2^{-j/p}\|x_j\|_p)_j\|_s,
       &&1\le s\le\infty,\\
 \|x\|_Z&=\max\left\{
   \Big(\sum_j2^{-j}\|x_j\|_p^p\Big)^{1/p},
   \Big(\sum_j2^{-j}\|x_j\|_2^2\Big)^{1/2}\right\}.
\end{align*}
Write $d_n(K,V)=\inf_{\dim F\le n}\sup_{x\in K}\inf_{y\in F}
\|x-y\|_V$. Malykhin's finite-window proof gives
\begin{equation}
 d_N(K_\theta,Z)\gtrsim
 N^{-1/2}L^{\beta(\theta)},\qquad
 \beta(\theta)=\max\{1/2,1-1/\theta\}.
 \label{app:malykhin:finite-lower}
\end{equation}
Indeed, each level has total coordinate weight $2^j2^{-j}=1$.
For the product of level octahedra, the block-weight ratios in
\cite[Theorem~2 and equation~(1)]{ref13} are $1/L$; the other
hypotheses hold for large $N$, since $L\log(2L)=o(N)$.
The two terms in that estimate are $cL/\sqrt N$ and
$2^{-\max\mathcal W_N/p}/2$. The second term is larger than the first for large $N$
because $\widetilde p<p$, proving~\eqref{app:malykhin:finite-lower}
for $\theta=\infty$. Regarding all coordinates as one octahedron
gives $c\sqrt{L/N}$ for $\theta=1$. The inclusions
$K_1\subset K_\theta$ and
$L^{-1/\theta}K_\infty\subset K_\theta$ give the estimate for every remaining
$\theta$.

Put $a_j=2^{-j/p}\|x_j\|_p$. Since $p>2$,
$\|a\|_p\le\|a\|_2$ and
$2^{-j/2}\|x_j\|_2\le a_j$. Consequently
\begin{equation*}
 \|x\|_Z\le\|a\|_2
 \le L^{(1/2-1/s)_+}\|x\|_{Y_s}.
\end{equation*}
Apply this comparison to each error $x-y$. The range of a rank-$<N$
linear approximant is admissible for $d_{N-1}$, so
$a_N\ge d_{N-1}\ge d_N$. The target isometry
$x\mapsto(2^{-j/p}x_j)_j$ therefore yields
\begin{equation}
 a_N(D_{\theta,s}^{\mathcal W_N})\gtrsim
 N^{-1/2}\Lambda_N^{\beta(\theta)-(1/2-1/s)_+},
 \qquad1\le\theta,s\le\infty,
 \label{app:malykhin:basic-lower}
\end{equation}
for inner indices $1,p$.

For complex scalars, restrict a complex-linear rank-$<N$
approximant to real inputs and take real parts of its output.
Its real rank is $<2N$ and its error on real inputs does not
increase. Thus $a_N^{\mathbb C}(D)\ge a_{2N}^{\mathbb R}(D)$;
applying the real bound on $\mathcal W_{2N}$ gives the same order.
The subsequent finite-dimensional comparisons work over either
field.

Take $p_2=r\le p_1'$. If $p_1\ge1$, the contractive source
inclusion $\ell_1\hookrightarrow\ell_{p_1}$ transfers
\eqref{app:malykhin:basic-lower} to these inner indices. If
$p_1<1$, Lemma~\ref{result:4.4}, applied on the finite window, gives the same approximation numbers as
for $p_1=1$, since $q_2\ge1$ makes the target Banach. The resulting
exponent is $B_t$ in~\eqref{app:malykhin:target-exponent}.

If $p_1'=r\le p_2$ and $q_1,q_2\ge1$, take finite-dimensional
Banach adjoints. Their inner indices are $p_2',r$ and their fine
indices are $q_2',q_1'$, so the preceding argument applies to the
adjoints. This includes $p_2=\infty$, with $p_2'=1$.
Approximation numbers are unchanged by finite-dimensional
adjoints, including the $1,\infty$ endpoints. Substituting
$(q_2',q_1')$ into $B_t$ gives $B_s$ in
\eqref{app:malykhin:source-exponent}.

For $q_1<1\le q_2$, apply Lemma~\ref{result:4.4} to the source outer sum.
The target is Banach, so this reduces the problem to $q_1=1$. At $(q_1,q_2)=(1,1)$, both formulas
give a finite-window lower bound with exponent $1/2$ for every
admissible inner pair. For $0<q_1\le q_2\le1$, include the target
outer space contractively in $\ell_1$ and then apply the same lemma to the source outer
sum with this Banach target. The same lower bound follows.

Finally, the coordinate factorizations in
Proposition~\ref{result:6.1}, with $d=1$, transfer the bounds
from $2^j$ to $M_j\asymp2^j$ at the same index $N$.
Contractive compression then passes the finite-window lower bounds to the
full spaces. In particular, they apply to the
full spaces of bounded block sequences at infinite fine indices.
\end{proof}

\section*{Declarations}

\noindent\textbf{Competing interests.} The author declares no competing interests.

\medskip
\noindent\textbf{Data availability.} No datasets were generated or analysed during this study.

\medskip
\noindent\textbf{AI Statement.} The author used artificial-intelligence tools for language editing,
\LaTeX{} formatting, and limited assistance with local mathematical reasoning. The
proof strategy and mathematical development are the author's own; all mathematical
content was independently written and checked by the author, who takes full
responsibility for it.

\end{document}